\newcommand{\periodafter}[1]{\ifstrempty{#1}{}{#1.}}
\titleformat{\section}[block]{\scshape\filcenter\LARGE}{\thesection.}{.5em}{}
\titleformat{\subsection}[block]{\bfseries\filcenter\large}{\thesubsection.}{.5em}{\medskip}
\titleformat{\subsubsection}[runin]{\bfseries}{\thesubsubsection.}{.5em}{\periodafter}
\titlespacing{\subsubsection}{0pt}{\topsep}{.5em}
\newtheoremstyle{ntheorem}%
	{\topsep}{\topsep}{\itshape}{0pt}{\bfseries}{.}{.5em}%
	{\thmnumber{#2.\hspace{.5em}}\thmname{#1}\thmnote{ (#3)}}
\newtheoremstyle{ndefinition}%
	{\topsep}{\topsep}{\normalfont}{0pt}{\bfseries}{.}{.5em}%
	{\thmnumber{#2.\hspace{.5em}}\thmname{#1}\thmnote{ (#3)}}
\newtheoremstyle{nremark}%
	{\topsep}{\topsep}{\normalfont}{0pt}{\itshape}{.}{.5em}%
	{\thmnumber{}\thmname{#1}\thmnote{ (#3)}}
\theoremstyle{ntheorem}
  	\newtheorem{theorem}[subsubsection]{Theorem}
  	\newtheorem{proposition}[subsubsection]{Proposition}
	\newtheorem{lemma}[subsubsection]{Lemma}
  	\newtheorem{corollary}[subsubsection]{Corollary}
\theoremstyle{ndefinition}
	\newtheorem{remark}[subsubsection]{Remark}
\def\@equationname{equation}
\newenvironment{eqn}[1]{%
    \def\mymathenvironmenttouse{#1}%
    \ifx\mymathenvironmenttouse\@equationname%
        \refstepcounter{subsubsection}%
    \else
        \patchcmd{\@arrayparboxrestore}{equation}{subsubsection}{}{}
        \patchcmd{\print@eqnum}{equation}{subsubsection}{}{}%
        \patchcmd{\incr@eqnum}{equation}{subsubsection}{}{}%
    \fi
    \csname\mymathenvironmenttouse\endcsname%
}{%
    \ifx\mymathenvironmenttouse\@equationname%
        \tag{\thesubsubsection}%
    \fi
    \csname end\mymathenvironmenttouse\endcsname%
}
	\edef\Drop@@{%
		\dimen@=#1\relax
		\dimen@=.5\dimen@
		\A@=-\sinDirection\dimen@
		\B@=\cosDirection\dimen@
		\setboxz@h{%
			\setbox2=\hbox{\kern3\A@\raise3\B@\copy\z@}%
			\dp2=\z@ \ht2=\z@ \wd2=\z@ \box2
			\setbox2=\hbox{\kern\A@\raise\B@\copy\z@}%
			\dp2=\z@ \ht2=\z@ \wd2=\z@ \box2
			\setbox2=\hbox{\kern-\A@\raise-\B@\copy\z@}%
			\dp2=\z@ \ht2=\z@ \wd2=\z@ \box2
			\setbox2=\hbox{\kern-3\A@\raise-3\B@ \noexpand\boxz@}%
			\dp2=\z@ \ht2=\z@ \wd2=\z@ \box2
		}%
		\ht\z@=\z@ \dp\z@=\z@ \wd\z@=\z@ \noexpand\styledboxz@
	}%
\xydef@\Tttip@{\kern2pt \vrule height2pt depth2pt width\z@
	\Tttip@@ \kern2pt \egroup
	\U@c=0pt \D@c=0pt \L@c=0pt \R@c=0pt \Edge@c={\circleEdge}%
	\def\Leftness@{.5}\def\Upness@{.5}%
	\def\Drop@@{\styledboxz@}\def\Connect@@{\straight@{\dottedSpread@\jot}}}
\xydef@\Tttip@@{%
	\dimen@=.25\dimen@
 	\B@=\cosDirection\dimen@
	\setboxz@h\bgroup\reverseDirection@\line@ \wdz@=\z@ \ht\z@=\z@ \dp\z@=\z@
	{\vDirection@(1,-1)\xydashl@ \xyatipfont\char\DirectionChar}%
	{\vDirection@(1,+1)\xydashl@ \xybtipfont\char\DirectionChar}%
}
\xydef@\ar@form{
	\ifx \space@\next \expandafter\DN@\space{\xyFN@\ar@form}%
	\else\ifx ^\next \DN@ ^{\xyFN@\ar@style}\edef\arvariant@@{\string^}%
	\else\ifx _\next \DN@ _{\xyFN@\ar@style}\edef\arvariant@@{\string_}%
	\else\ifx 0\next \DN@ 0{\xyFN@\ar@style}\def\arvariant@@{0}%
	\else\ifx 1\next \DN@ 1{\xyFN@\ar@style}\def\arvariant@@{1}%
	\else\ifx 2\next \DN@ 2{\xyFN@\ar@style}\def\arvariant@@{2}%
	\else\ifx 3\next \DN@ 3{\xyFN@\ar@style}\def\arvariant@@{3}%
	\else\ifx 4\next \DN@ 4{\xyFN@\ar@style}\def\arvariant@@{4}%
	\else\ifx \bgroup\next \let\next@=\ar@style
	\else\ifx [\next \DN@[##1]{\ar@modifiers{[##1]}}
	\else\ifx *\next \DN@ *{\ar@modifiers}%
	\else\addLT@\ifx\next \let\next@=\ar@slide
	\else\ifx /\next \let\next@=\ar@curveslash
	\else\ifx (\next \let\next@=\ar@curveinout 
	\else\addRQ@\ifx\next \addRQ@\DN@{\ar@curve@}%
	\else\addLQ@\ifx\next \addLQ@\DN@{\xyFN@\ar@curve}%
	\else\addDASH@\ifx\next \addDASH@\DN@{\defarstem@-\xyFN@\ar@}%
	\else\addEQ@\ifx\next \addEQ@\DN@{\def\arvariant@@{2}\defarstem@-\xyFN@\ar@}%
	\else\addDOT@\ifx\next \addDOT@\DN@{\defarstem@.\xyFN@\ar@}%
	\else\ifx :\next \DN@:{\def\arvariant@@{2}\defarstem@.\xyFN@\ar@}%
	\else\ifx ~\next \DN@~{\defarstem@~\xyFN@\ar@}%
	\else\ifx !\next \DN@!{\dasharstem@\xyFN@\ar@}%
	\else\ifx ?\next \DN@?{\ar@upsidedown\xyFN@\ar@}%
	\else \let\next@=\ar@error
	\fi\fi\fi\fi\fi\fi\fi\fi\fi\fi\fi\fi\fi\fi\fi\fi\fi\fi\fi\fi\fi\fi\fi \next@}
\newcommand{\fl}{\rightarrow}
\newcommand{\fll}{\longrightarrow}
\newcommand{\dfl}{\Rightarrow}
\newcommand{\tfl}{\Rrightarrow}
\newcommand{\qfl}{\xymatrix@1@C=10pt{\ar@4 [r] &}}
\newcommand{\cl}[1]{\overline{#1}}
\newcommand{\rep}[1]{\widehat{#1}}
\newcommand{\tck}[1]{#1^{\top}}
\renewcommand{\phi}{\varphi}
\renewcommand{\epsilon}{\varepsilon}
\newcommand{\Cr}{\mathcal{C}}
\renewcommand{\Pr}{\mathcal{P}}
\newcommand{\Sr}{\mathcal{S}}
\newcommand{\Vr}{\mathcal{V}}
\newcommand{\B}{\mathbf{B}}
\newcommand{\C}{\mathbf{C}}
\newcommand{\D}{\mathbf{D}}
\newcommand{\G}{\mathbf{G}}
\def\catego#1{\mathsf{#1}}
\newcommand{\Cat}{\catego{Cat}}
\newcommand{\Pol}{\catego{Pol}}
\newcommand{\Grpd}{\catego{Grpd}}
\newcommand{\DbCat}{\catego{DbCat}}
\newcommand{\DbGrpd}{\catego{DbGrpd}}
\newcommand{\DiPol}{\catego{DiPol}}
\newcommand{\Set}{\catego{Set}}
\newcommand{\ifthen}[2]{\ifthenelse{#1}{#2}{}}
\DeclareMathOperator{\Sq}{Sq}
\renewcommand{\leq}{\leqslant}
\renewcommand{\geq}{\geqslant}
\def\Sph{\mathrm{Sph}}
\def\squier#1{\mathcal{S}(#1)}
\def\N{\mathbb{N}}
\def\T{\mathbf{T}}
\def\norm#1{||#1||}
\def\hhmm{\number\hh:\ifnum\mm<10{}0\fi\number\mm}
\definecolor{vert}{rgb}{0,0.45,0}
\definecolor{rouge}{rgb}{0.89,0.04,0.36}
\def\irr{\mathrm{Irr}}
\def\nf{\mathrm{NF}}
\def\cdg{\diamond}
\newcommand{\st}{\partial}
\newcommand{\er}[2]{\,_{#1}{#2}}
\newcommand{\re}[2]{{#2}_{#1}}
\newcommand{\ere}[2]{\,_{#1}{#2}_{#1}}
\newcommand{\ER}{\er{E}{R}}
\newcommand{\RE}{\re{E}{R}}
\newcommand{\ERE}{\ere{E}{R}}
\def\dar[#1,#2,#3]{\ar@<0.8ex>[#1] ^{#3} \ar@<-0.8ex>[#1] _{#2} }
\newcommand{\ddck}{\rotatebox[origin=c]{-90}{{$\vDash$}}}
\newcommand{\dck}[1]{#1^{\ddck}}
\newcommand{\dvck}[1]{#1^{\ddck,v}}
\newcommand{\cck}[1]{#1^{\oblong}}
\newcommand{\sqext}{E \rtimes \Gamma \cup \peiffer{\tck{E}}{S^\ast}}
\newcommand{\acy}[1]{\dvck{(E,S, E \rtimes #1 \cup \peiffer{\tck{E}}{S^\ast})}}
\newcommand{\bcy}[1]{\dck{(E,S,\squier{#1})}}
\newcommand{\sqconf}[1]{#1^{\curlyvee}}
\newcommand{\Saux}{S^{\amalg}}
\newcommand{\peiffer}[2]{\text{Peiff}(#1,#2)}
\newcommand{\dbpol}{\catego{DbPol}}
\newcommand{\Eo}{E^{\top (1)}}
\newcommand{\Ro}{R^{\ast (1)}}
\newcommand{\So}{S^{\ast (1)}}
\newcommand{\hatt}[1]{\widehat{\widetilde{#1}}}
\newcommand{\tilda}[1]{\widetilde{\widehat{#1}}}
\newcommand{\auteur}[3]{
\noindent
\begin{minipage}[t]{.45\textwidth}
\begin{flushright}
\textsc{#1} \\
{\footnotesize\textsf{#2}}
\end{flushright} 
\end{minipage}
\qquad
\begin{minipage}[t]{.45\textwidth}
#3
\end{minipage}
}
\newcommand{\ddott}[1]{
\mathord{
\begin{tikzpicture}[baseline = 0, scale=0.911]
	\draw[<-,thick,black] (0.08,-.3) to (0.08,.4);
      \node at (0.08,0.1) {$\color{black}\bullet$};
     \node at (0.08,.5) {$\scriptstyle{#1}$};
\end{tikzpicture}
}}
\newcommand{\udott}[1]{\mathord{
\begin{tikzpicture}[baseline = 0, scale=0.911]
	\draw[->,thick,black] (0.08,-.3) to (0.08,.4);
      \node at (0.08,0.05) {$\color{black}\bullet$};
   \node at (0.08,-.4) {$\scriptstyle{#1}$};
\end{tikzpicture}
}}
\newcommand{\crossdn}[2]{\mathord{
\begin{tikzpicture}[baseline = 0, scale=0.911]
	\draw[<-,thick,black] (0.28,-.3) to (-0.28,.4);
	\draw[<-,thick,black] (-0.28,-.3) to (0.28,.4);
   \node at (-0.28,.5) {$\scriptstyle{#1}$};
   \node at (0.28,.5) {$\scriptstyle{#2}$};
   \node at (.4,.05) {};
\end{tikzpicture}
}}
\newcommand{\sdrd}[1]{\mathord{
\begin{tikzpicture}[baseline = 0, scale=0.911]
  \draw[->,thick,black] (0.3,0) to (0.3,-.4);
	\draw[-,thick,black] (0.3,0) to[out=90, in=0] (0.1,0.4);
	\draw[-,thick,black] (0.1,0.4) to[out = 180, in = 90] (-0.1,0);
	\draw[-,thick,black] (-0.1,0) to[out=-90, in=0] (-0.3,-0.4);
	\draw[-,thick,black] (-0.3,-0.4) to[out = 180, in =-90] (-0.5,0);
  \draw[-,thick,black] (-0.5,0) to (-0.5,.4);
  \node at (-0.13,0.11) {$\bullet$};
   \node at (-0.5,.5) {$\scriptstyle{#1}$};
   \node at (0.5,0) { };
\end{tikzpicture}
}}
\newcommand{\surdd}[1]{\mathord{
\begin{tikzpicture}[baseline = 0, scale=0.911]
  \draw[->,thick,black] (0.3,0) to (0.3,.4);
	\draw[-,thick,black] (0.3,0) to[out=-90, in=0] (0.1,-0.4);
	\draw[-,thick,black] (0.1,-0.4) to[out = 180, in = -90] (-0.1,0);
	\draw[-,thick,black] (-0.1,0) to[out=90, in=0] (-0.3,0.4);
	\draw[-,thick,black] (-0.3,0.4) to[out = 180, in =90] (-0.5,0);
  \draw[-,thick,black] (-0.5,0) to (-0.5,-.4);
  \node at (-0.13,0.12) {$\bullet$};
   \node at (-0.5,-.5) {$\scriptstyle{#1}$};
   \node at (0.5,0) { };
\end{tikzpicture}
}}
\newcommand{\crossup}[2]{\mathord{
\begin{tikzpicture}[baseline = 0, scale=0.911]
	\draw[->,thick,black] (0.28,-.3) to (-0.28,.4);
	\draw[->,thick,black] (-0.28,-.3) to (0.28,.4);
   \node at (-0.28,-.4) {$\scriptstyle{#1}$};
   \node at (0.28,-.4) {$\scriptstyle{#2}$};
\end{tikzpicture}
}}
\newcommand{\cupdb}[2]{\mathord{
\begin{tikzpicture}[baseline = 0, scale=0.911]
	\draw[-,thick,black] (0.4,0.3) to[out=-90, in=0] (0.1,-0.1);
	\draw[-,thick,black] (0.1,-0.1) to[out = 180, in = -90] (-0.2,0.3);
    \node at (-0.2,.5) {$\scriptstyle{#1}$};
    \node at (0.5,0.5) {$\scriptstyle{#2}$};
\end{tikzpicture}
}}
\newcommand{\capdb}[2]{\mathord{
\begin{tikzpicture}[baseline = 0, scale=0.911]
	\draw[-,thick,black] (0.4,-0.1) to[out=90, in=0] (0.1,0.3);
	\draw[-,thick,black] (0.1,0.3) to[out = 180, in = 90] (-0.2,-0.1);
    \node at (-0.2,-.3) {$\scriptstyle{#1}$};
    \node at (0.5,-0.3) {$\scriptstyle{#2}$};
\end{tikzpicture}
}}
\newcommand{\cupr}[1]{\mathord{
\begin{tikzpicture}[baseline = 0, scale=0.911]
	\draw[<-,thick,black] (0.4,0.3) to[out=-90, in=0] (0.1,-0.1);
	\draw[-,thick,black] (0.1,-0.1) to[out = 180, in = -90] (-0.2,0.3);
    \node at (-0.2,.4) {$\scriptstyle{#1}$};
\end{tikzpicture}
}}
\newcommand{\capr}[1]{\mathord{
\begin{tikzpicture}[baseline = 0, scale=0.911]
	\draw[<-,thick,black] (0.4,-0.1) to[out=90, in=0] (0.1,0.3);
	\draw[-,thick,black] (0.1,0.3) to[out = 180, in = 90] (-0.2,-0.1);
    \node at (-0.2,-.2) {$\scriptstyle{#1}$};
\end{tikzpicture}
}}
\newcommand{\cupl}[1]{\mathord{
\begin{tikzpicture}[baseline = 0, scale=0.911]
	\draw[-,thick,black] (0.4,0.4) to[out=-90, in=0] (0.1,0);
	\draw[->,thick,black] (0.1,0) to[out = 180, in = -90] (-0.2,0.4);
    \node at (0.4,.5) {$\scriptstyle{#1}$};
\end{tikzpicture}
}}
\newcommand{\capl}[1]{\mathord{
\begin{tikzpicture}[baseline = 0,scale=0.911]
	\draw[-,thick,black] (0.4,0) to[out=90, in=0] (0.1,0.4);
	\draw[->,thick,black] (0.1,0.4) to[out = 180, in = 90] (-0.2,0);
    \node at (0.4,-.1) {$\scriptstyle{#1}$};
\end{tikzpicture}
}}
\newcommand{\sdld}[1]{\mathord{
\begin{tikzpicture}[baseline = 0, scale=0.911]
  \draw[-,thick,black] (0.3,0) to (0.3,-.4);
	\draw[-,thick,black] (0.3,0) to[out=90, in=0] (0.1,0.4);
	\draw[-,thick,black] (0.1,0.4) to[out = 180, in = 90] (-0.1,0);
	\draw[-,thick,black] (-0.1,0) to[out=-90, in=0] (-0.3,-0.4);
	\draw[-,thick,black] (-0.3,-0.4) to[out = 180, in =-90] (-0.5,0);
  \draw[->,thick,black] (-0.5,0) to (-0.5,.4);
  \node at (-0.14,-0.08) {$\bullet$};
   \node at (0.3,-.5) {$\scriptstyle{#1}$};
   \node at (0.5,0) { };
\end{tikzpicture}
}}
\newcommand{\suld}[1]{\mathord{
\begin{tikzpicture}[baseline = 0, scale=0.911]
  \draw[-,thick,black] (0.3,0) to (0.3,.4);
	\draw[-,thick,black] (0.3,0) to[out=-90, in=0] (0.1,-0.4);
	\draw[-,thick,black] (0.1,-0.4) to[out = 180, in = -90] (-0.1,0);
	\draw[-,thick,black] (-0.1,0) to[out=90, in=0] (-0.3,0.4);
	\draw[-,thick,black] (-0.3,0.4) to[out = 180, in =90] (-0.5,0);
  \draw[->,thick,black] (-0.5,0) to (-0.5,-.4);
  \node at (-0.13,0.12) {$\bullet$};
   \node at (0.3,.5) {$\scriptstyle{#1}$};
   \node at (0.5,0) { };
\end{tikzpicture}
}}
\newcommand{\dpd}[1]{\mathord{
\begin{tikzpicture}[baseline = 0, scale=0.911]
	\draw[<-,thick,black] (0.08,-.3) to (0.08,.4);
      \node at (0.08,0.1) {$\color{black}\bullet$};
     \node at (0.08,.5) {$\scriptstyle{#1}$};
\end{tikzpicture}
}}
\newcommand{\upd}[1]{\mathord{
\begin{tikzpicture}[baseline = 0, scale=0.911]
	\draw[->,thick,black] (0.08,-.3) to (0.08,.4);
      \node at (0.08,0.1) {$\color{black}\bullet$};
     \node at (0.08,.5) {$\scriptstyle{#1}$};
\end{tikzpicture}
}}
\newcommand{\cuprdl}[2]{\mathord{
\begin{tikzpicture}[baseline = 0, scale=0.911]
	\node at (-0.16,0.15) {$\color{black}\bullet$};
	\draw[<-,thick,black] (0.4,0.4) to[out=-90, in=0] (0.1,-0.1);
	\draw[-,thick,black] (0.1,-0.1) to[out = 180, in = -90] (-0.2,0.4);
    \node at (-0.2,.5) {$\scriptstyle{#1}$};
  \node at (0.3,-0.15) { };
      \node at (-0.36,0.15) {$\scriptstyle{#2}$};
\end{tikzpicture}
}}
\newcommand{\cuprdr}[2]{\mathord{
\begin{tikzpicture}[baseline = 0, scale=0.911]
	\draw[<-,thick,black] (0.4,0.4) to[out=-90, in=0] (0.1,-0.1);
	\draw[-,thick,black] (0.1,-0.1) to[out = 180, in = -90] (-0.2,0.4);
    \node at (-0.2,.5) {$\scriptstyle{#1}$};
      \node at (0.37,0.15) {$\color{black}\bullet$};
      \node at (0.57,0.15) {$\scriptstyle{#2}$};
\end{tikzpicture}
}}
\newcommand{\caprdl}[2]{\mathord{
\begin{tikzpicture}[baseline = 0, scale=0.911]
	\draw[<-,thick,black] (0.4,-0.1) to[out=90, in=0] (0.1,0.4);
	\draw[-,thick,black] (0.1,0.4) to[out = 180, in = 90] (-0.2,-0.1);
    \node at (-0.2,-.2) {$\scriptstyle{#1}$};
  \node at (0.3,0.5) { };
      \node at (-0.14,0.25) {$\color{black}\bullet$};
      \node at (-.34,0.25) {$\scriptstyle{#2}$};
\end{tikzpicture}
}}
\newcommand{\caprdr}[2]{\mathord{
\begin{tikzpicture}[baseline = 0, scale=0.911]
	\node at (0.32,0.25) {$\color{black}\bullet$};
	\draw[<-,thick,black] (0.4,-0.1) to[out=90, in=0] (0.1,0.4);
	\draw[-,thick,black] (0.1,0.4) to[out = 180, in = 90] (-0.2,-0.1);
    \node at (-0.2,-.2) {$\scriptstyle{#1}$};
  \node at (0.3,0.5) { };
      \node at (0.52,0.25) {$\scriptstyle{#2}$};
\end{tikzpicture}
}}
\newcommand{\cupldr}[2]{\mathord{
\begin{tikzpicture}[baseline = 0, scale=0.911]
	\draw[-,thick,black] (0.4,0.4) to[out=-90, in=0] (0.1,-0.1);
	\draw[->,thick,black] (0.1,-0.1) to[out = 180, in = -90] (-0.2,0.4);
    \node at (-0.2,.5) {$\scriptstyle{#1}$};
  \node at (0.3,-0.15) { };
      \node at (0.37,0.15) {$\color{black}\bullet$};
      \node at (0.57,0.15) {$\scriptstyle{#2}$};
\end{tikzpicture}
}}
\newcommand{\cupldl}[2]{\mathord{
\begin{tikzpicture}[baseline = 0, scale=0.911]
	\node at (-0.16,0.15) {$\color{black}\bullet$};
	\draw[-,thick,black] (0.4,0.4) to[out=-90, in=0] (0.1,-0.1);
	\draw[->,thick,black] (0.1,-0.1) to[out = 180, in = -90] (-0.2,0.4);
    \node at (-0.2,.5) {$\scriptstyle{#1}$};
  \node at (0.3,-0.15) { };
      \node at (-0.36,0.15) {$\scriptstyle{#2}$};
\end{tikzpicture}
}}
\newcommand{\capldl}[2]{\mathord{
\begin{tikzpicture}[baseline = 0, scale=0.911]
	\draw[-,thick,black] (0.4,-0.1) to[out=90, in=0] (0.1,0.4);
	\draw[->,thick,black] (0.1,0.4) to[out = 180, in = 90] (-0.2,-0.1);
    \node at (-0.2,-.2) {$\scriptstyle{#1}$};
  \node at (0.3,0.5) { };
      \node at (-0.14,0.25) {$\color{black}\bullet$};
      \node at (-.34,0.25) {$\scriptstyle{#2}$};
\end{tikzpicture}
}}
\newcommand{\capldr}[2]{\mathord{
\begin{tikzpicture}[baseline = 0, scale=0.911]
	\node at (0.32,0.25) {$\color{black}\bullet$};
	\draw[-,thick,black] (0.4,-0.1) to[out=90, in=0] (0.1,0.4);
	\draw[->,thick,black] (0.1,0.4) to[out = 180, in = 90] (-0.2,-0.1);
    \node at (-0.2,-.2) {$\scriptstyle{#1}$};
  \node at (0.3,0.5) { };
      \node at (0.52,0.25) {$\scriptstyle{#2}$};
\end{tikzpicture}
}}
\newcommand{\tdcrosslr}[2]{\mathord{
\begin{tikzpicture}[baseline = 0,scale=0.611]
	\draw[<-,thick,black] (0.3,.5) to (-0.3,-.5);
	\draw[-,thick,black] (-0.2,.2) to (0.2,-.3);
        \draw[-,thick,black] (0.2,-.3) to[out=130,in=180] (0.5,-.4);
        \draw[-,thick,black] (0.5,-.4) to[out=0,in=270] (0.8,.5);
        \draw[-,thick,black] (-0.2,.2) to[out=130,in=0] (-0.5,.5);
        \draw[->,thick,black] (-0.5,.5) to[out=180,in=-270] (-0.8,-.5);        
      \draw[<-,thick,black] (-0.3,-.5) to (0.3,-1.5);
         \draw[-,thick,black] (-0.5,-1.5) to[out=180,in=-90] (-0.8,-.5);
         \draw[-,thick,black] (-0.2,-1.2) to[out=230,in=0] (-0.5,-1.5);
         \draw[-,thick,black] (-0.2,-1.2) to (0.2,-0.7); 
         \draw[-,thick,black] (0.2,-0.7) to[out=50,in=180] (0.5,-0.6);
          \draw[->,thick,black] (0.5,-0.6) to[out=0,in=90] (0.7,-1.5);
\end{tikzpicture}
}}
\newcommand{\tdcrosslra}[2]{\mathord{
\begin{tikzpicture}[baseline = 0,scale=0.611]
	\draw[<-,thick,black] (0.3,.5) to (-0.3,-.5);
	\draw[-,thick,black] (-0.2,.2) to (0.2,-.3);
        \draw[-,thick,black] (0.2,-.3) to[out=130,in=180] (0.5,-.4);
        \draw[-,thick,black] (0.5,-.4) to[out=0,in=270] (0.8,.5);
        \draw[-,thick,black] (-0.2,.2) to[out=130,in=0] (-0.5,.5);
        \draw[->,thick,black] (-0.5,.5) to[out=180,in=-270] (-0.8,-.5);        
      \draw[<-,thick,black] (-0.3,-.5) to (0.3,-1.5);
         \draw[-,thick,black] (-0.5,-1.5) to[out=180,in=-90] (-0.8,-.5);
         \draw[-,thick,black] (-0.2,-1.2) to[out=230,in=0] (-0.5,-1.5);
         \draw[-,thick,black] (-0.2,-1.2) to (0.2,-0.7); 
         \draw[-,thick,black] (0.2,-0.7) to[out=50,in=180] (0.5,-0.6);
          \draw[->,thick,black] (0.5,-0.6) to[out=0,in=90] (0.7,-1.5);
          	\draw[-,thick,black] (1.4,0.5) to[out=90, in=0] (1.1,0.9);
	\draw[->,thick,black] (1.1,0.9) to[out = 180, in = 90] (0.8,0.5);
	\draw[-,thick,black] (1.4,0.5) to (1.4,-1.2);
\end{tikzpicture}
}}
\newcommand{\tdcrosslrb}[1]{\mathord{
\begin{tikzpicture}[baseline = 0,scale=0.911]
    \draw[->,thick,black] (0,0) to (0,0.4);
	\draw[-,thick,black] (0.8,0) to[out=90, in=0] (0.5,0.4);
	\draw[->,thick,black] (0.5,0.4) to[out = 180, in = 90] (0.2,0);
\end{tikzpicture}
}}
\newcommand{\tdcrosslrc}[2]{\mathord{
\begin{tikzpicture}[baseline = 0,scale=0.611]
	\draw[<-,thick,black] (0.3,.5) to (-0.3,-.5);
	\draw[-,thick,black] (-0.2,.2) to (0.2,-.3);
        \draw[-,thick,black] (0.2,-.3) to[out=130,in=180] (0.5,-.4);
        \draw[-,thick,black] (-0.2,.2) to[out=130,in=0] (-0.5,.5);
        \draw[->,thick,black] (-0.5,.5) to[out=180,in=-270] (-0.8,-.5);        
      \draw[<-,thick,black] (-0.3,-.5) to (0.3,-1.5);
         \draw[-,thick,black] (-0.5,-1.5) to[out=180,in=-90] (-0.8,-.5);
         \draw[-,thick,black] (-0.2,-1.2) to[out=230,in=0] (-0.5,-1.5);
         \draw[-,thick,black] (-0.2,-1.2) to (0.2,-0.7); 
         \draw[-,thick,black] (0.2,-0.7) to[out=50,in=180] (0.5,-0.6);
          \draw[->,thick,black] (0.5,-0.6) to[out=0,in=90] (0.7,-1.5);
\end{tikzpicture}
}}
\newcommand{\doublecroisementhaut}[2]{\begin{tikzpicture} \begin{scope} [ x = 10pt, y = 10pt, join = round, cap = round, thick, scale=1.5 ] \draw[<-] (0.00,1.50)--(0.00,1.25);
\draw[<-] (1.00,1.50)--(1.00,1.25) ; 
\draw (0.00,1.25)--(1.00,0.75) (1.00,1.25)--(0.00,0.75) ; \draw (0.00,0.75)--(0.00,0.50) (1.00,0.75)--(1.00,0.50) ; \draw (0.00,0.50)--(1.00,0.00) (1.00,0.50)--(0.00,0.00) ; \draw (0.00,0.00)--(0.00,-0.25) (1.00,0.00)--(1.00,-0.25) ; 
\node at (0.00,-0.75) {$\scriptstyle{#1}$};
\node at (1.00,-0.75) {$\scriptstyle{#2}$};
\end{scope} \end{tikzpicture}}
\newcommand{\ybgauchehaut}[3]{
\begin{tikzpicture} \begin{scope} [ x = 10pt, y = 10pt, join = round, cap = round, thick , scale=1.5 ] \draw[<-] (0.00,2.25)--(0.00,2.00) ;
\draw[<-] (1.00,2.25)--(1.00,2.00);
\draw[<-] (2.00,2.25)--(2.00,1.75) ; \draw (0.00,2.00)--(1.00,1.50) (1.00,2.00)--(0.00,1.50) ;  \draw (0.00,1.50)--(0.00,1.00) (1.00,1.50)--(1.00,1.25) (2.00,1.75)--(2.00,1.25) ;  \draw (1.00,1.25)--(2.00,0.75) (2.00,1.25)--(1.00,0.75) ; \draw (0.00,1.00)--(0.00,0.50) (1.00,0.75)--(1.00,0.50) (2.00,0.75)--(2.00,0.25) ; \draw (0.00,0.50)--(1.00,0.00) (1.00,0.50)--(0.00,0.00) ;  \draw (0.00,0.00)--(0.00,-0.25) (1.00,0.00)--(1.00,-0.25) (2.00,0.25)--(2.00,-0.25) ; 
\node at (0.00,-0.75) {$\scriptstyle{#1}$};
\node at (1.00,-0.75) {$\scriptstyle{#2}$};
\node at (2.00,-0.75) {$\scriptstyle{#3}$};
\end{scope} 
\end{tikzpicture}}
\newcommand{\ybdroithaut}[3]{
\begin{tikzpicture} 
\begin{scope} [ x = 10pt, y = 10pt, join = round, cap = round, thick , scale=1.5] \draw[<-] (0.00,2.00)--(0.00,1.50);
\draw[<-] (1.00,2.00)--(1.00,1.75) ;
\draw[<-] (2.00,2.00)--(2.00,1.75) ;  \draw (1.00,1.75)--(2.00,1.25) (2.00,1.75)--(1.00,1.25) ; \draw (0.00,1.50)--(0.00,1.00) (1.00,1.25)--(1.00,1.00) (2.00,1.25)--(2.00,0.75) ; \draw (0.00,1.00)--(1.00,0.50) (1.00,1.00)--(0.00,0.50) ;  \draw (0.00,0.50)--(0.00,0.00) (1.00,0.50)--(1.00,0.25) (2.00,0.75)--(2.00,0.25) ;  \draw (1.00,0.25)--(2.00,-0.25) (2.00,0.25)--(1.00,-0.25) ; \draw (0.00,0.00)--(0.00,-0.50) (1.00,-0.25)--(1.00,-0.50) (2.00,-0.25)--(2.00,-0.50) ; 
\node at (0.00,-0.75) {$\scriptstyle{#1}$};
\node at (1.00,-0.75) {$\scriptstyle{#2}$};
\node at (2.00,-0.75) {$\scriptstyle{#3}$};
\end{scope}
\end{tikzpicture}}
\newcommand{\doublecroisementbas}[2]{\begin{tikzpicture} \begin{scope} [ x = 10pt, y = 10pt, join = round, cap = round, thick, scale=1.5 ] \draw (0.00,1.50)--(0.00,1.25);
\draw (1.00,1.50)--(1.00,1.25) ; 
\draw (0.00,1.25)--(1.00,0.75) (1.00,1.25)--(0.00,0.75) ; \draw (0.00,0.75)--(0.00,0.50) (1.00,0.75)--(1.00,0.50) ; \draw (0.00,0.50)--(1.00,0.00) (1.00,0.50)--(0.00,0.00) ; 
\draw[->] (0.00,0.00)--(0.00,-0.25) ;
\draw[->] (1.00,0.00)--(1.00,-0.25) ; 
\node at (0.00,-0.75) {$\scriptstyle{#1}$};
\node at (1.00,-0.75) {$\scriptstyle{#2}$};
\end{scope} \end{tikzpicture}}
\newcommand{\ybgauchebas}[3]{
\begin{tikzpicture} \begin{scope} [ x = 10pt, y = 10pt, join = round, cap = round, thick , scale=1.5 ] 
\draw (0.00,2.25)--(0.00,2.00) ;
\draw (1.00,2.25)--(1.00,2.00);
\draw (2.00,2.25)--(2.00,1.75) ; \draw (0.00,2.00)--(1.00,1.50) (1.00,2.00)--(0.00,1.50) ;  \draw (0.00,1.50)--(0.00,1.00) (1.00,1.50)--(1.00,1.25) (2.00,1.75)--(2.00,1.25) ;  \draw (1.00,1.25)--(2.00,0.75) (2.00,1.25)--(1.00,0.75) ; \draw (0.00,1.00)--(0.00,0.50) (1.00,0.75)--(1.00,0.50) (2.00,0.75)--(2.00,0.25) ; \draw (0.00,0.50)--(1.00,0.00) (1.00,0.50)--(0.00,0.00) ;  \draw[->] (0.00,0.00)--(0.00,-0.25) ;
\draw[->] (1.00,0.00)--(1.00,-0.25) ;
\draw[->] (2.00,0.25)--(2.00,-0.25) ; 
\node at (0.00,-0.75) {$\scriptstyle{#1}$};
\node at (1.00,-0.75) {$\scriptstyle{#2}$};
\node at (2.00,-0.75) {$\scriptstyle{#3}$};
\end{scope} 
\end{tikzpicture}}
\newcommand{\ybdroitbas}[3]{
\begin{tikzpicture} 
\begin{scope} [ x = 10pt, y = 10pt, join = round, cap = round, thick , scale=1.5] 
\draw (0.00,2.00)--(0.00,1.50);
\draw (1.00,2.00)--(1.00,1.75) ;
\draw (2.00,2.00)--(2.00,1.75) ;  \draw (1.00,1.75)--(2.00,1.25) (2.00,1.75)--(1.00,1.25) ; \draw (0.00,1.50)--(0.00,1.00) (1.00,1.25)--(1.00,1.00) (2.00,1.25)--(2.00,0.75) ; \draw (0.00,1.00)--(1.00,0.50) (1.00,1.00)--(0.00,0.50) ;  \draw (0.00,0.50)--(0.00,0.00) (1.00,0.50)--(1.00,0.25) (2.00,0.75)--(2.00,0.25) ;  \draw (1.00,0.25)--(2.00,-0.25) (2.00,0.25)--(1.00,-0.25) ; 
\draw[->] (0.00,0.00)--(0.00,-0.50) ;
\draw[->] (1.00,-0.25)--(1.00,-0.50); 
\draw[->] (2.00,-0.25)--(2.00,-0.50) ; 
\node at (0.00,-0.75) {$\scriptstyle{#1}$};
\node at (1.00,-0.75) {$\scriptstyle{#2}$};
\node at (2.00,-0.75) {$\scriptstyle{#3}$};
\end{scope}
\end{tikzpicture}}
\newcommand{\doubleidentitehaut}[2]{
\begin{tikzpicture}
\begin{scope} [ x = 10pt, y = 10pt, join = round, cap = round, thick , scale=1.5]
\draw[->] (0,-0.25) -- (0,1.25);
\draw[->] (1,-0.25) -- (1,1.25);
\node at (0.00,-0.75) {$\scriptstyle{#1}$};
\node at (1.00,-0.75) {$\scriptstyle{#2}$};
\end{scope}
\end{tikzpicture}}
\newcommand{\doubleidentitebas}[2]{
\begin{tikzpicture}
\begin{scope} [ x = 10pt, y = 10pt, join = round, cap = round, thick , scale=1.5]
\draw[<-] (0,-0.25) -- (0,1.25);
\draw[<-] (1,-0.25) -- (1,1.25);
\node at (0.00,-0.75) {$\scriptstyle{#1}$};
\node at (1.00,-0.75) {$\scriptstyle{#2}$};
\end{scope}
\end{tikzpicture}}
\begin{document}
\thispagestyle{empty}

\begin{center}

\begin{doublespace}
\begin{huge}
{\scshape Coherent confluence modulo relations}
\end{huge}

\vskip+2pt

\begin{huge}
{\scshape and double groupoids}
\end{huge}

\bigskip
\hrule height 1.5pt 
\bigskip

\vskip+5pt

\begin{Large}
{\scshape Benjamin Dupont -- Philippe Malbos}
\end{Large}
\end{doublespace}

\vfill

\vskip+25pt

\begin{small}\begin{minipage}{14cm}
\noindent\textbf{Abstract --}
A coherent presentation of an $n$-category is a presentation by generators, relations and relations among relations. 
Confluent and terminating rewriting systems generate coherent presentations, whose relations among relations are defined by confluence diagrams of critical branchings. This article introduces a procedure to compute coherent presentations when the rewrite relations are defined modulo a set of axioms. Our coherence results are formulated using the structure of $n$-categories enriched in double groupoids, whose horizontal cells represent rewriting paths, vertical cells represent the congruence generated by the axioms and square cells represent coherence cells induced by diagrams of confluence modulo. We illustrate our constructions on rewriting systems modulo commutation relations in commutative monoids, isotopy relations in pivotal monoidal categories, and inverse relations in groups.

\smallskip

\smallskip\noindent\textbf{Keywords --} Rewriting modulo, double categories, coherence of higher categories.

\smallskip

\smallskip\noindent\textbf{M.S.C. 2010 -- Primary:} 68Q42, 18D05. \textbf{Secondary:} 18D20, 68Q40.
\end{minipage}\end{small}
\end{center}

\vspace{-15pt}

\begin{center}
\begin{small}\begin{minipage}{12cm}
\renewcommand{\contentsname}{}
\setcounter{tocdepth}{2}
\tableofcontents
\end{minipage}
\end{small}
\end{center}

\section{Introduction}

Algebraic rewriting aims at providing constructive methods based on rewriting theory to prove properties on higher algebraic structures given through generators and relations. This approach has been used to compute linear bases \cite{Dupont21,DupontEbertLauda21}, coherent presentations \cite{GaussentGuiraudMalbos15,HageMalbos17} or higher-syzygies \cite{GuiraudMalbos12advances,GuiraudHoffbeckMalbos19}. 
For diagrammatic algebras, such as Temperley-Lieb algebras \cite{TemperleyLieb71}, Brauer algebras \cite{Brauer37}, Birman-Wenzl algebras \cite{BirmanWenzl89}, Jones' planar algebras \cite{Jones99}, Khovanov-Lauda-Rouquier algebras and the associated categorifications of quantum groups \cite{KhovanovLauda08,Rouquier08}, and Khovanov's categorification of the Heisenberg algebra \cite{Khovanov2010,Brundan2017}, the presentations have a great complexity due to the number of generators and relations. 
These structures have a huge number of relations, leading to a combinatorial explosion of cases in the proof of their rewriting  properties.
However, many of these relations are inherent to the algebraic structure itself. For instance, some of the above algebras can be interpreted as linear $2$-categories with an additional pivotal structure, whose string diagrams representing $2$-cells are drawn up to isotopy. The inherent relations create useless obstructions to rewriting properties, and the classical rewriting approach is not efficient to study these structures. It is thus necessary to define the rewriting rules modulo some relations and to study the properties of rewriting modulo these relations. 

This work is part of a larger project that aims to develop methods of rewriting modulo in algebraic rewriting contexts. Algebraic rewriting modulo has already been applied to the computation of linear bases of linear~$2$-categories that categorify associative algebras, equipped with a system of idempotents, see \cite{Lauda12}. Linear bases of the spaces of morphisms of a linear~$2$-category are relevant to prove the isomorphism between its Grothendieck group and the categorified algebra. Such bases can be computed from presentations that are confluent modulo a non-oriented part of the relations \cite{DUP19}.

This article presents a rewriting categorical approach based on rewriting modulo in order to compute coherent presentations modulo the inherent structure, where coherence encodes generating relations among relations modulo.
Our construction constitutes the first step in the computation of cofibrant replacements of these structures by polygraphic resolutions.

\subsection*{Coherence by rewriting}

A \emph{syzygy} of a presentation of a higher-dimensional (strict globular) $n$-category is a relation among relations, \emph{i.e.} an equality between two relations that can be deduced from the presentation. For a given presentation, we would like to compute all the syzygies by making explicit some of them that generate all the others. The starting presentation extended by a generating family of syzygies is called a \emph{coherent presentation}, which we will describe explicitly from now on. First, generators of higher categories are described by \emph{polygraphs} \cite{Burroni93}, also called \emph{computads} \cite{Street76,Power91}. As a directed graph generates a $1$-category, an \emph{$(n-1)$-polygraph} $P$ generates an $(n-1)$-category. It is constructed by adjunction of generating $k$-cells, for $0\leq k \leq n-1$, whose source and target are composites of generating $(k-1)$-cells. The relations are defined by a set $R$ of \emph{rules}, described by generating $n$-cells that relate composites of $(n-1)$-cells in the free $(n-1)$-category $P^\ast$ on $P$. The data $(P,R)$ is called an \emph{$n$-polygraph}, including both generators and rules, that present a $(n-1)$-category, whose underlying $(n-2)$-category is freely generated by the $k$-cells of $P$, for $0\leq k \leq n-2$, and the $(n-1)$-cells are subject to the relations in $R$.
A rewriting path with respect to $R$ is interpreted by an $n$-cell in the free $n$-category $(P,R)^\ast$ generated by the $n$-polygraph~$(P,R)$.

A \emph{coherent extension} of the $n$-polygraph $(P,R)$ is a set $\Gamma$ of generating $(n+1)$-cells on the free $(n,n-1)$-category $\tck{(P,R)}$ on $(P,R)$ that generates all the syzygies of the presentation. In other words, any $n$-sphere in the quotient of the free $(n,n-1)$-category $\tck{(P,R)}$ by the congruence generated by $\Gamma$ is trivial. 
We say also that the extension $\Gamma$ is \emph{acyclic}.
When the $n$-polygraph $(P,R)$ is \emph{convergent}, that is confluent and terminating, it can be extended into a coherent presentation by $(n+1)$-cells keeping track of confluence diagrams of \emph{critical branchings}, which correspond to minimal overlappings of rules in $R$. Explicitly, any chosen family of \emph{generating confluences}, made of $(n+1)$-cells
\[
\xymatrix @C=2.5em @R=0.5em {
& v
	\ar @{.>} @/^1ex/ [dr] ^{f'}
	\ar@2 []!<0pt,-10pt>;[dd]!<0pt,10pt> ^-{A_{f,g}}
\\
u
	\ar @/^1ex/ [ur] ^{f}
	\ar @/_1ex/ [dr] _{g}
& &
w
\\
& v'
\ar @{.>}@/_1ex/ [ur] _{g'}
}
\]
given by a confluence diagram for each critical branching $(f,g)$, extends the $n$-polygraph $(P,R)$ to a coherent presentation. This construction was initiated by Squier in~\cite{Squier94} for monoids and generalized to $n$-categories in~\cite{GuiraudMalbos09,GuiraudMalbos18}. 

\subsection*{Coherence modulo by rewriting modulo}

This article extends these constructions to $n$-categories whose underlying $(n-1)$-categories are not free, using rewriting systems defined modulo a set of fixed relations. Rewriting modulo appears when studied reductions are defined modulo the axioms of an ambiant algebraic structure, eg. rewriting in commutative, groupoidal, linear, pivotal, weak structures. Furthermore, rewriting modulo makes the property of confluence easier to prove.
Indeed, the family of critical branchings that should be considered in the analysis of local confluence is reduced, and the non-orientation of certain relations allows more flexibility when reaching confluence.
The most naive approach of rewriting modulo is to consider the system $\ERE$ of rules of $R$ on congruence classes modulo $E$. This approach works for associative and commutative theories. However, it appears inefficient in general for the analysis of confluence. Indeed, the reducibility of an equivalence class needs to explore all the class, hence it requires all equivalence classes to be finite.
Another way has been considered by Huet in \cite{Huet80}, where rewriting paths involve only oriented rules and no equivalence steps, and the confluence property is formulated modulo an equivalence relation. However, for algebraic rewriting systems this approach is  too restrictive for computations~\cite{JouannaudLi12}. 
Peterson and Stickel introduced in~\cite{PetersonStickel81} an extension of Knuth-Bendix's completion procedure~\cite{KnuthBendix70}, to reach confluence of a rewriting system modulo an equational theory, for which a finite complete unification algorithm is known. They applied their procedure to rewriting systems modulo axioms of associativity and commutativity, in order to rewrite in free commutative groups, commutative unitary rings, and distributive lattices.
Jouannaud and Kirchner enlarged this approach in \cite{JouannaudKirchner84} with the definition of rewriting properties for any rewriting system modulo~$S$ such that $R\subseteq S \subseteq \ERE$.
They also proved a critical branching lemma and developed a completion procedure for a rewriting system modulo $\ER$, whose one-step reductions consist in application of a rule in $R$ using $E$-matching. Their completion procedure is based on a finite $E$-unification algorithm. Bachmair and Dershowitz developed a generalization of Jouannaud-Kirchner's completion procedure using inference rules \cite{BachmairDershowitz89}. Several other approaches have also been studied for term rewriting systems modulo to deal with various equational theories, see \cite{Viry95, Marche96}.

In Section~\ref{S:PolygraphsModulo}, we give a polygraphic formulation of rewriting modulo that is the main notion of this article:
\begin{quote}
{\bf Definition \ref{SSS:PolygraphsModulo}.} 
An \emph{$n$-polygraph modulo} is a data~$\Pr:=(R,E,S)$ made of
\begin{enumerate}[{\bf i)}]
\item an $n$-polygraph $R$, whose generating $n$-cells are called \emph{primary rules},
\item an $n$-polygraph $E$ such that $E_{\leq (n-2)} = R_{\leq (n-2)}$ and $E_{n-1} \subseteq R_{n-1}$, whose generating $n$-cells are called \emph{modulo rules},
\item a cellular extension $S$ of $R_{n-1}^\ast$ satisfying the following condition:
\[
R_n
\subseteq
S 
\subseteq 
\ERE.
\]
\end{enumerate}
\end{quote}
This means that $S$ contains all the generating $n$-cells of $R_n$ and that every generating $n$-cell in~$S$ can be written $(e,f,e')$ with $e,e'\in\tck{E}_n$ and $f$ in $R_n^{\ast (1)}$.
In this way, a presentation modulo is split into two parts: oriented rules defined by $R_n$ and non-oriented equations defined by~$E_n$. 
In Section~\ref{S:PolygraphsModulo}, we define the termination property for polygraphs modulo and we recall from \cite{Huet80} Huet's principle of double induction. We define confluence properties for polygraphs modulo following Huet \cite{Huet80} and Jouannaud-Kirchner \cite{JouannaudKirchner84}. We give a completion procedure for the $n$-polygraph modulo $(R,E,\ER)$ in terms of critical branchings that implements inference rules for completion modulo given by Bachmair and Dershowitz in \cite{BachmairDershowitz89}. 
Proofs of confluence modulo involve square diagrams whose horizontal edges correspond to rewriting paths with respect to $S$ and vertical edges correspond to congruences with respect to $E$. In this work, we extend this interpretation by considering faces which describe generating syzygies modulo. To this purpose, we formulate the notion of coherence modulo for an~$n$-category using the structure of $n$-categories enriched in double groupoids.

\subsection*{Confluence modulo and double categories}
 The notion of double category was first introduced by Ehresmann in~\cite{Ehresmann63} as an internal category in the category of categories. The notion of double groupoids, that is internal groupoids in the category of groupoids, and its higher-dimensional versions have been widely used in homotopy theory, \cite{BrownSpencer76, BrownHiggins78}, see \cite{BrownHigginsSivera11} and \cite{Brown04} for a complete account on the theory. A double category gives four related categories: a vertical and a horizontal one, and two categories of squares with either vertical or horizontal cells as sources and targets.  
A square cell $A$ is pictured by
\[
\xymatrix@R=2em @C=2em{ 
u 
\ar[r] ^{f} _{}="src" 
\ar[d] _{e} 
& 
v
\ar[d]^{e'} \\
 u' 
 \ar[r] _{g} ^{}="tgt" 
 &
 v' 
 \ar@2 "src"!<0pt,-7pt>;"tgt"!<0pt,7pt>  ^{A}
}
\]
where $f,g$ are horizontal cells, and $e,e'$ are vertical cells.
Section~\ref{S:PolygraphsModulo} formulates the property of confluence for an $n$-polygraph modulo $\Pr=(R,E,S)$ using cubical diagrams made of $n$-cells of the free $n$-category $(R_{\leq n-1},S)^\ast$ as horizontal edges, and $n$-cells of the free $(n,n-1)$-category $\tck{(R_{\leq n-1},E)}$ generated by $E$ as vertical edges. Thus, we formulate our coherence results in the structure of $(n-1)$-category enriched in double groupoids, obtained by adjoining \emph{coherence cells modulo}, that are given by square cells filling the confluence diagrams of polygraphs modulo. 
We define a \emph{branching} of $\Pr$ as a triple~$(f,e,g)$, where $f$ and $g$ are $n$-cells of $S^\ast$ and $e$ is an $n$-cell of $\tck{E}$, that we picture as follows
\[
\raisebox{0.55cm}{
\xymatrix @R=2em @C=2em{
u
  \ar[r] ^-{f}
  \ar[d] _-{e}
&
u' 
\\
v
  \ar[r] _-{g}
&
v'
}}
\]
Such a branching is \emph{confluent modulo $E$} if there exist $n$-cells $f'$ and $g'$ in $S^\ast$ and an $n$-cell $e'$ in $\tck{E}$ as in the following diagram:
\[
\raisebox{0.55cm}{
\xymatrix @R=2em @C=2em{
u
  \ar[r] ^-{f}
  \ar[d] _-{e}
&
u' 
  \ar@{.>}[r] ^-{f'} 
& 
u''
  \ar@{.>}[d] ^-{e'}
\\
v
  \ar[r] _-{g}
&
v'
  \ar@{.>}[r] _-{g'} 
&
w''
}}
\]

\subsection*{Coherent confluence modulo}

The notion of coherent presentation modulo that we introduce is based on an adaptation of the structure of polygraphs to the cubical setting. 
In Section~\ref{S:DoubleCoherentPresentation}, we define a \emph{double $(n+2,n)$-polygraph} as a data~$P=(P^v,P^h,P^s)$ made of two $(n+1)$-polygraphs $P^{v}$ and $P^{h}$ with the same underlying $n$-polygraph, and a square extension $P^{s}$ made of generating squares of the form
\[
\raisebox{0.55cm}{
\xymatrix @R=2em @C=2em{
u
  \ar[r] ^-{f}
  \ar[d] _-{e}
&
u' 
  \ar[d] ^-{e'}
\\
v
  \ar[r] _-{g}
&
v'
}}
\]
where $f,g$ are $(n+1)$-cells of the free $(n+1,n)$-category $\tck{(P^{v})}$ and $e,e'$ are $(n+1)$-cells of the free $(n+1,n)$-category $\tck{(P^{h})}$.
We define a \emph{double coherent presentation} of an $n$-category $\Cr$ as a double $(n+2,n)$-polygraph $P=(P^{v},P^{h},P^{s})$ such that the coproduct of the polygraphs $P^{v}$ and $P^{h}$ is a presentation of the category $\Cr$ and the square extension $P^{s}$ satisfies an acyclicity property defined in~\eqref{SSS:acyclicity}.

Section~\ref{S:CoherentConfluenceModulo} introduces the notion of confluence of an $n$-polygraph modulo $\Pr=(R,E,S)$ with respect to a square extension $\Gamma$ of $\Pr$, that is of the pair of $n$-categories $(\tck{E},S^\ast)$. We say that $\Pr$ is \emph{$\Gamma$-confluent} if, for every $S$-branching $(f,e,g)$, there exist $n$-cells~$f',g'$ in~$S^*$, $e'$ in~$\tck{E}$ and an~$(n+1)$-cell
\[
\raisebox{0.55cm}{
\xymatrix @R=2em @C=2em{
u
  \ar[r] ^-{f}
  \ar[d] _-{e}
&
u' 
  \ar@{.>}[r] ^-{f'} 
  \ar@2 []!<0pt,-12pt>; [d]!<0pt,+12pt> ^-{A}
& 
w
  \ar@{.>}[d] ^-{e'}
\\
v
  \ar[r] _-{g}
&
v'
  \ar@{.>}[r] _-{g'} 
&
w'
}}
\]
in the free $(n-1)$-category enriched in double categories generated by the square extension $\Gamma$ and an action of $E$ on $\Gamma$ as defined in Subsection~\ref{SS:CoherentNewmanModulo}.
We deduce coherent confluence of an $n$-polygraph modulo from local coherent confluence properties. In particular, we prove a coherent version of Newman's lemma modulo: 

\begin{quote}
{\bf Theorem \ref{T:ConfluenceTheorem}.}
\emph{
Let $\Pr$ be a terminating $n$-polygraph modulo, and $\Gamma$ be a square extension of $\Pr$. If $\Pr$ is locally $\Gamma$-confluent, then it is $\Gamma$-confluent.
}
\end{quote}
Finally, we prove a coherent version of the critical branching lemma modulo, deducing coherent local confluence from coherent confluence of some critical branchings modulo:

\begin{quote}
{\bf Theorem \ref{T:CoherentCriticalBranchingTheorem}}
\emph{
Let $\Pr$ be a terminating $n$-polygraph modulo, and $\Gamma$ be a square extension of $\Pr$. Then $\Pr$ is $\Gamma$-locally confluent if, and only if, it is $\Gamma$-critically confluent.
}
\end{quote}

\subsection*{Coherent completion modulo}

Section~\ref{S:CoherentCompletionModulo} presents several ways to extend a presentation of an $(n-1)$-category by a polygraph modulo into a double coherent presentation of this category. 
As stated above, a convergent $n$-polygraph~$E$ extends to a globular coherent presentation of the $(n-1)$-category it presents by adjunction of a chosen family of generating confluences of $E$. We define a family of \emph{generating confluences} of an $n$-polygraph modulo $\Pr=(R,E,S)$ as a square extension of $\Pr$ made of square $(n+1)$-cells of the form
\[
\xymatrix @R=2em @C=2em{
u
  \ar [r] ^-{f}
  \ar [d] _-{e}
&
u'
  \ar [r] ^-{f'}
  \ar@2 []!<0pt,-12pt>; [d]!<0pt,+12pt> ^-{}
&
w
  \ar[d] ^-{e'}
\\
u
  \ar [r] _-{g}
&
v
  \ar [r] _-{g'}
&
w'
}
\]
for every critical branching $(f,e,g)$ of $\Pr$. The main theorem of this section gives conditions to extend a square extension of $\Pr$ to a coherent extension: 
\begin{quote}
{\bf Theorem \ref{T:CoherentAcyclicity}.}
\emph{
Let $\Pr=(R,E,S)$ be an $E$-normalizing $n$-polygraph modulo, and $\Gamma$ be a square extension of $\Pr$ such that $\Pr$ is $\Gamma$-diconvergent. Then any Squier extension $\squier{\Gamma}$ is coherent.
}
\end{quote}

As a consequence of this result, Corollary~\ref{C:mainApplication}, states that for a diconvergent and $E$-normalizing $n$-polygraph modulo $\Pr$, and a family $\Gamma$ of generating confluences of $\Pr$, then any Squier extension $\squier{\Gamma}$ is coherent.
In particular, when $E$ is empty, we recover Squier's coherence theorem for convergent $n$-polygraphs as given in~{\cite[Thm. 5.2.]{GuiraudMalbos09}}, see also \cite{GuiraudMalbos12mscs}.
We conclude Section~\ref{S:CoherentCompletionModulo} by giving another condition to compute a coherent extension in terms of commuting normalization strategies on $\Pr$.

\subsection*{Organization of the article}
In Section~\ref{S:Preiminaries}, we introduce notations and terminology on higher-dimensional globular $n$-categories and globular $n$-polygraphs. We refer the reader to \cite{GuiraudMalbos09} for a deeper presentation on rewriting properties of $n$-polygraphs. We also recall from \cite{Ehresmann63} the notions of double categories and of double groupoids.
In Section~\ref{S:DoubleCoherentPresentation} we define the notions of double polygraphs and dipolygraphs, giving double coherent presentations of globular $n$-categories. Following \cite{DawsonPare02}, we construct the free $n$-category enriched in double groupoids generated by a double $(n+2)$-polygraph, in which our coherence results will be formulated.
Finally, we explain how to deduce a globular coherent presentation from a double coherent presentation. As examples, we make explicit the notion of coherent presentation in the cases of groups, commutative monoids and pivotal categories.
Section~\ref{S:PolygraphsModulo} is devoted to the study of rewriting properties of polygraphs defined modulo relations. We formulate the notions of termination, confluence, local confluence and confluence modulo for these polygraphs. Following \cite{BachmairDershowitz89}, we give a completion procedure in terms of critical branchings for confluence modulo of the polygraph modulo $(R,E,\ER)$.
In Section~\ref{S:CoherentConfluenceModulo}, we develop the notion of coherent confluence modulo and we prove a coherent version of Newman's lemma and critical branching lemma for polygraphs modulo.
In Section~\ref{S:CoherentCompletionModulo}, we define the notion of coherent completion modulo, and we show how to compute a double coherent presentation of an $n$-category by coherent completion.
Section~\ref{S:GlobularCoherenceFromDoubleCoherence} shows how to deduce a globular coherent presentation for an $n$-category from a double coherent presentation generated by a polygraph modulo. Finally, we apply our constructions in the situation of commutative monoids, pivotal monoidal categories modulo isotopy relations, and groups modulo inverse relations.

\section{Preliminaries}
\label{S:Preiminaries}

This preliminary section introduces notations on higher-dimensional categories used in this article. We recall the structures of polygraphs from \cite{Street76,Power91,Burroni93} and of double categories from~\cite{Ehresmann63}. We refer the reader to~\cite{GuiraudMalbos09,GuiraudMalbos12advances,GuiraudMalbos18} for rewriting properties of polygraphs and to~\cite{BrownSpencer76, DawsonPare93, DawsonPare02} for deeper presentations on double categories and double groupoids.

\subsection{Higher-dimensional categories and polygraphs}

Throughout this article, $n$ denotes either a fixed natural number or $\infty$.

\subsubsection{Higher-dimensional categories}

We denote by $\Cat_n$ the category of (small, strict and globular) $n$-categories.
If $\Cr$ is an~$n$-category, we denote by $\Cr_k$ the set of $k$-cells of~$\Cr$. If $f$ is a $k$-cell of~$\Cr$, then $\st_{-,i}(f)$ and $\st_{+,i}(f)$ respectively denote the $i$-source and $i$-target of $f$, while $(k-1)$-source and $(k-1)$-target are denoted by $\st_-(f)$ and $\st_+(f)$ respectively. The source and target maps satisfy the \emph{globular relations}: 
\begin{eqn}{equation}
\label{E:GlobularRelations}
\st_{\alpha,i}\st_{\alpha,i+1} \:=\: \st_{\alpha,i}\st_{\beta,i+1},
\end{eqn}
for all $\alpha\neq\beta$ in $\{-,+\}$.
Two $k$-cells $f$ and $g$ are \emph{$i$-composable} when $\st_{+,i}(f)=\st_{-,i}(g)$. In that case, their $i$-composite is denoted by $f\star_i g$, or by $fg$ when $i=0$. The compositions satisfy the \emph{exchange relations}: 
\begin{eqn}{equation}
\label{E:ExchangeRelations}
(f _1\star_i g_1) \star_j (f_2 \star_i g_2) \:=\: (f_1 \star_j f_2) \star_i (g_1 \star_j g_2),
\end{eqn}
for all $i\neq j$ and cells $f_1,f_2,g_1,g_2$ such that both sides are defined. 
If $f$ is a $k$-cell, we denote by~$1_f$ its identity $(k+1)$-cell. When $1_f$ is composed with $l$-cells, we simply denote it by~$f$ for $l\geq k+1$. 

A $k$-cell $f$ of an $n$-category~$\Cr$ is \emph{$i$-invertible} when there exists a (necessarily unique) $k$-cell $g$ in $\Cr$, with $i$-source $\st_{+,i}(f)$ and $i$-target $\st_{-,i}(f)$, called the \emph{$i$-inverse of $f$}, that satisfies 
\[
f\star_i g \:=\: 1_{\st_{-,i}(f)}
\qquad\text{and}\qquad
g\star_i f \:=\: 1_{\st_{+,i}(f)}.
\]
When $i=k-1$, we say that $f$ is \emph{invertible} and we denote by $f^-$ its \emph{inverse}. As in higher-dimensional groupoids, if a $k$-cell $f$ is invertible and if its $i$-source $u$ and $i$-target $v$ are invertible, then $f$ is $(i-1)$-invertible, with $(i-1)$-inverse given by $v^- \star_{i-1} f^- \star_{i-1} u^-$.

For a natural number $p\leq n$, or for $p=n=\infty$, an \emph{$(n,p)$-category} is an $n$-category whose $k$-cells are invertible for every $k>p$. When $n<\infty$, this is an $n$-category enriched in $(n-p)$-groupoids and, when $n=\infty$, an $n$-category enriched in $\infty$-groupoids. In particular, an $(n,n)$-category is an $n$-category, and an $(n,0)$-category is an \emph{$n$-groupoid}, also called a \emph{groupoid} for $n=1$.

A \emph{$0$-sphere of $\Cr$} is a pair $(f,g)$ of $0$-cells of $\Cr$. For $1\leq k\leq n$, a \emph{$k$-sphere of $\Cr$} is a pair $S=(f,g)$ of $k$-cells of $\Cr$ such that $\st_-(f)=\st_-(g)$ and $\st_+(f)=\st_+(g)$. The $k$-cell $f$ (resp. $g$) is called the \emph{source} (resp. target) of $S$ denoted by $\st_-(S)$ (resp. $\st_+(S)$).
We denote by $\Sph_k(\Cr)$ the set of $k$-spheres of $\Cr$. If $f$ is a $k$-cell of $\Cr$, for $1\leq k\leq n$, the \emph{boundary of $f$} is the $(k-1)$-sphere $(\st_-(f),\st_+(f))$ denoted by $\partial(f)$.

\subsubsection{Cellular extensions}
Suppose $n<\infty$, a \emph{cellular extension} of an $n$-category $\Cr$ is a set $\Gamma$ equipped with a map $\gamma : \Gamma \fl \Sph_n(\Cr)$. By considering all the formal composites of elements of~$\Gamma$, seen as $(n+1)$-cells with source and target in~$\Cr$, we build the \emph{free $(n+1)$-category generated by~$\Gamma$ over $\Cr$}, and denoted by $\Cr[\Gamma]$. The \emph{size} of an $(n+1)$-cell $f$ of $\Cr[\Gamma]$, denoted by $\norm{f}_\Gamma$, is the number of $(n+1)$-cells of~$\Gamma$ it contains. We denote by $\Cr^{(1)}$ the set of $n$-cells in $\Cr$ of size $1$. 
We denote by $(\Cr)_\Gamma$ the quotient of the $n$-category $\Cr$ by the congruence generated by $\Gamma$, \emph{i.e.} the $n$-category we get from $\Cr$ by identification of the $n$-cells $\st_-(S)$ and $\st_+(S)$, for all $n$-sphere~$S$ of $\Gamma$. 

If $\Cr$ is an $(n,p)$-category and $\Gamma$ is a cellular extension of $\Cr$, then the \emph{free $(n+1,p)$-category generated by~$\Gamma$ over $\Cr$} is denoted by~$\Cr(\Gamma)$ and defined by: 
\[
\Cr(\Gamma) \::=\: \left(\Cr \left[ \Gamma,\; \Gamma^- \right] \right)_{\text{Inv}(\Gamma)},
\]
where $\Gamma^-$ contains the same $(n+1)$-cells as $\Gamma$, with source and target reversed, and $\text{Inv}(\Gamma)$ is the cellular extension of $\Cr \left[ \Gamma,\; \Gamma^- \right]$  made of two $(n+2)$-cells 
\[
f\star_{n}f^- \:\fl\: 1_{\st_-(f)}
\qquad\text{and}\qquad
f^-\star_{n}f \:\fl\: 1_{\st_+(f)},
\]
for every $(n+1)$-cell $f$ in $\Gamma$. 

For $p<n$, a cellular extension $\Gamma$ of $\Cr$ is \emph{acyclic} if the $(n,p)$-category $\Cr/\Gamma$ is aspherical, \emph{i.e.} such that, for every $n$-sphere $S$ of $\Cr$, there exists an $(n+1)$-cell with boundary~$S$ in the $(n+1,p)$-category~$\Cr(\Gamma)$. 

\subsubsection{Polygraphs}
Recall that an \emph{$n$-polygraph} is a data $P:=(P_0,P_1,\ldots,P_n)$ made of a set $P_0$ and, for every $0\leq k<n$, a cellular extension $P_{k+1}$ of the free $k$-category
\[
P_k^\ast \::=\: P_0[P_1]\ldots[P_k],
\]
whose elements are called \emph{generating $(k+1)$-cells of $P$}. We will use the following notations:
\begin{enumerate}[{\bf i)}]
\item for $0\leq k \leq n-1$, $P_{\leq k}$ denotes the underlying $k$-polygraph $(P_0,P_1,\ldots,P_k)$ of the $n$-polygraph $P$,
\item $P^\ast$ (resp. $\tck{P}$) denotes the free $n$-category (resp. $(n,n-1)$-category) generated by $P$,
\item $\cl{P}:= (P^\ast_{n-1})_{P_n}$ denotes the \emph{$(n-1)$-category presented by $P$}.
\end{enumerate}

Given two $n$-polygraphs $P$ and $Q$, a \emph{morphism} of $n$-polygraphs from $P$ to $Q$ is a pair $(\xi_{n-1} , f_n)$ where $\xi_{n-1}$ is a morphism of $(n-1)$-polygraphs from $P_{n-1}$ to $Q_{n-1}$, and $f_n : P_n \fl Q_n$ is a map such that the following diagrams commute:
\[ 
\xymatrix@R=2em@C=2em{
P_{n-1}^\ast \ar [d] _-{F_{n-1}(\xi_{n-1})} & P_n \ar [l] _-{s_{n-1}^P} \ar [d] ^-{f_n}  \\
Q_{n-1}^\ast & Q_n \ar [l] ^-{s_{n-1}^Q}  } 
\qquad  
\qquad
\xymatrix@R=2em@C=2em{
P_{n-1}^\ast \ar [d] _-{F_{n-1}(\xi_{n-1})} & P_n \ar [l] _-{t_{n-1}^P} \ar [d] ^-{f_n}  \\
Q_{n-1}^\ast & Q_n \ar [l] ^-{t_{n-1}^Q}  } 
\]
Equivalently, it is a sequence of maps $(f_k : P_k \fl Q_k)_k$ indexed by integers $0 \leq k \leq n-1$ such that the relations
\[ f_k s_k^P = s_k^Q f_{k+1} \quad \text{and} \quad f_k t_k^P = t_k^Q f_{k+1}
\]
holds for all $0\leq k \leq n-1$.
We denote by $\Pol_n$ the category of $n$-polygraphs and their morphisms, and by  $U^{Pol}_n: \Cat_n \fl \Pol_n$ the forgetful functor sending an~$n$-category to its underlying $n$-polygraph. 

For $0\leq p\leq n$, an \emph{$(n,p)$-polygraph} is a data $P$ made of an $p$-polygraph $(P_0,\dots,P_p)$, and for every $p \leq k < n$, a cellular extension $P_{k+1}$ of the free $(k,p)$-category 
\[
\tck{P}_k \::=\: P^*_p(P_{n+1})\cdots(P_{k}).
\]
Note that an $(n,n)$-polygraph is an $n$-polygraph.

\subsubsection{Contexts in $n$-categories}
A \emph{context} of an $n$-category $\Cr$ is a pair $(S,C)$, simply denoted by $C$, made of an $(n-1)$-sphere $S$ of $\Cr$ and an $n$-cell~$C$ in $\Cr[S]$ such that $\norm{C}_S=1$. 
Recall from {\cite[Prop. 2.1.3]{GuiraudMalbos09}} that every context of $\Cr$ has a decomposition
\[
f_n \star_{n-1} ( f_{n-1} \star_{n-2} 
\cdots 
\star_1 (f_1 \star_0 S\star_0 g_1 ) \star_1 
\cdots \star_{n-2} g_{n-1} )
\star_{n-1} g_n, 
\]
with $S\in\Sph_{n-1}(\Cr)$, and $f_k,g_k\in\Cr_n$, for every $1\leq k \leq n$.
Moreover, we choose these cells so that~$f_k$ and~$g_k$ are (the identities of) $k$-cells.  
A \emph{whisker of $\Cr$} is a context with a decomposition
\[
f_{n-1} \star_{n-2}  \cdots \star_1 ( f_1 \star_0 S\star_0 g_1 )\star_1 \cdots \star_{n-2} g_{n-1} 
\]
such that $f_k,g_k\in\Cr_k$, for every $1\leq k\leq n-1$.
Given an $n$-polygraph $P$, recall from {\cite[Prop. 2.1.5]{GuiraudMalbos09}} that every $n$-cell $f$ in $P^{\ast}$ with size $k\geq 1$ has a decomposition 
\[
f \:=\: C_1[\gamma_1] \star_{n-1} \cdots \star_{n-1} C_k[\gamma_k],
\] 
where $\gamma_1$, $\dots$, $\gamma_k$ are generating $n$-cells of $P$ and $C_1$, $\dots$, $C_k$ are whiskers of $P^*$.

\subsubsection{Rewriting paths}
\label{SSS:Reductions}
Let $P$ be an $n$-polygraph, and $(u,v)$ in $\Sph_{n-1}(P_{n-1}^\ast)$.
A \emph{$P_n$-rewriting step} from $u$ to $v$ is an $n$-cell $f$ in $P_n^{\ast(1)}$ such that $\partial_-(f)=u$ and $\partial_+ (f) = v$. It can be written $f=C[\gamma]$, where $\gamma$ is a generating $n$-cell of $P$, and $C$ is a whisker of $P_{n-1}^\ast$. An $(n-1)$-cell $u$ of $P_{n-1}^\ast$ is \emph{$P_n$-reducible} if there exists a $P_n$-rewriting step with source $u$, otherwise $u$ is \emph{$P_n$-irreducible.}

A \emph{$P_n$-rewriting path} is a sequence $(f_i)_{i\in I}$ of $P_n$-rewriting steps such that $\st_+(f_i)=\st_-(f_{i+1})$ for every $i\in I$. A rewriting path $(f_1,f_2,\ldots,f_k)$ yields an $n$-cell $f_1\star_{n-1} f_2 \star_{n-1} \ldots \star_{n-1} f_k$ in $P_n^\ast$. The polygraph $P$ is \emph{terminating} if there is no infinite rewriting path.
We say that the $(n-1)$-cell \emph{$u$ $P_n$-reduces to $v$} if there is a $P_n$-rewriting path with source $u$ and target $v$.

\subsection{Double groupoids}

In this subsection, we recall the notion of double category introduced in \cite{Ehresmann63}.
Recall that an \emph{internal category} $\C$ in a category $\Vr$ with finite limits is a data $(\C_1,\C_0,\st_-^\C,\st_+^\C,\circ_\C,i_\C)$, where 
\[
\st_-^\C,\st_+^\C : \C_1 \fll \C_0,
\qquad
i_\C : \C_0 \fll \C_1,
\qquad
\circ_\C : \C_1 \times_{\C_0} \C_ 1 \fll \C_1
\]
are morphisms of $\Vr$ satisfying the axioms of a category, and $ \C_1 \times_{\C_0} \C_ 1$ denotes the pullback in~$\Vr$ over morphisms~$\st_-^\C$ and $\st_+^\C$. 
An internal functor from $\C$ to $\D$ is a pair of morphisms $\C_1 \fl \D_1$ and $\C_0 \fl \D_0$ in $\Vr$ commuting in the obvious way. 
We denote by $\Cat(\Vr)$ the category of internal categories in~$\Vr$ and their functors.
In the same way, we define an \emph{internal groupoid} $\G$ in $\Vr$ as an internal category $(\G_1,\G_0,\st_-^\G,\st_+^\G,\circ_\G,i_\G)$ with an additional morphism 
\[
(\cdot)^{-}_{\G} : \G_1 \fl \G_1
\] 
satisfying the axioms of groups, that is
\begin{eqn}{equation}
\label{E:AxiomGroup1}
\st_{-}^{\G} \circ (\cdot )^{-}_{\G} = \st_+^{\G}, \quad \st_{+}^{\G} \circ (\cdot)^{-}_{\G} = \st_{-}^{\G},\quad 
\end{eqn}
\begin{eqn}{equation}
\label{E:AxiomGroup2}
i_{\G} \circ \st_{-}^{\G} = \circ_{\G} \circ (\text{id} \times (\cdot)^{-}_{\G}) \circ \Delta,\quad
i_{\G} \circ \st_{+}^{\G} = \circ_{\G} \circ ((\cdot)^{-}_{\G} \times \text{id}) \circ \Delta,
\end{eqn}
where $\Delta : \G_1 \fl \G_1 \times \G_1$ is the diagonal functor.
We denote by $\Grpd(\Vr)$ the category of internal groupoids in~$\Vr$ and their functors.

\subsubsection{Double categories and double groupoids}
Denote by $\Cat$ (resp. $\Grpd$) the category of (small) categories (resp. groupoids) and functors. 
The category of \emph{double categories} (resp. \emph{double groupoids}) is defined as the category $\Cat(\Cat)$ (resp. $\Grpd(\Grpd)$). Explicitly, a double category is an internal category $(\C_1,\C_0,\st_-^\C,\st_+^\C,\circ_\C,i_\C)$ in $\Cat$, that gives four related categories:
\begin{align*}
 \C^{sv}&\: := \: (\C^s,\C^v,\st_{-,1}^v,\st_{+,1}^v,\cdg^v,i^v_1),& \C^{sh}&\: :=\: (\C^s,\C^h,\st_{-,1}^h,\st_{+,1}^h,\cdg^h,i^h_1),\\
 \C^{vo}&\: :=\:  (\C^v,\C^o,\st_{-,0}^v,\st_{+,0}^v,\circ^v,i^v_0),&\C^{ho}&\: := \:(\C^h,\C^o,\st_{-,0}^h,\st_{+,0}^h,\circ^h,i^h_0),\\
\end{align*}
where $\C^{sh}$ is the category $\C_1$ and $\C^{vo}$ is the category $\C_0$. The sources, target and identity maps pictured in the following diagram
\[
\xymatrix@R=3.2em @C=3.8em{
& 
\C^s 
\ar@<+1.7ex>[dr] ^-{\st_{+,1}^h}
\ar@<-1.7ex>[dr] _-{\st_{-,1}^h}
\ar@<+1.7ex>[dl] ^-{\st_{-,1}^v}
\ar@<-1.7ex>[dl] _-{\st_{+,1}^v}
&
\\
\C^v 
\ar@<+1.7ex>[dr] ^-{\st_{+,0}^v}
\ar@<-1.7ex>[dr] _-{\st_{-,0}^v}
\ar[ur] |-{i_{1}^v}
&& 
\C^h
\ar@<+1.7ex>[dl] ^-{\st_{-,0}^h}
\ar@<-1.7ex>[dl] _-{\st_{+,0}^h}
\ar[ul] |-{i_{1}^h}
\\
&\C^o
\ar[ur] |-{i_{0}^h}
\ar[ul] |-{i_{0}^v}
&
}
\]
satisfy the following relations:
\begin{enumerate}[{\bf i)}]
\item 
$\st_{\alpha,0}^h\st_{\beta,1}^h = \st_{\beta,0}^v\st_{\alpha,1}^v$,
for all $\alpha,\beta$ in $\{-,+\}$,
\item
$\st_{\alpha,1}^\mu i_1^\eta = i_0^\mu\st_{\alpha,0}^\eta$, 
for all $\alpha$ in $\{-,+\}$ and $\mu,\eta$ in $\{v,h\}$,
\item $i_1^vi_0^v = i_1^hi_0^h$,
\item 
$\st_{\alpha,1}^{\mu}(A\cdg^{\mu} B) =\st_{\alpha,1}^{\mu}(A) \circ^{\mu} \st_{\alpha,1}^{\mu}(B)$, 
for all $\alpha\in\{-,+\}$, $\mu\in\{v,h\}$, and squares $A,B$ such that both sides are defined,
\item \emph{The middle four interchange law} :
\begin{eqn}{equation}
\label{E:MiddleFourIdentities}
(A\cdg^vA')\cdg^h(B\cdg^hB') = (A \cdg^h B) \cdg^v (A' \cdg^h B'), 
\end{eqn}
for all cells $A,A',B,B'$ in $\C^s$ such that both sides are defined. 
\end{enumerate}
Elements of $\C^o$ are called \emph{point cells}, the elements of $\C^h$ and $\C^v$ are called \emph{horizontal} and \emph{vertical cells}, and pictured by
\[
\xymatrix@C=1em@R=1em{ 
x_1
\ar[rr] ^{f} 
& & 
x_2
}
\qquad\text{and}\qquad
\raisebox{1cm}{
\xymatrix@C=1em@R=1em{ 
x_1
\ar[dd] _{e} 
\\
\\ 
x_2
}}
\]
respectively.
Following relations {\bf i)}, the elements of $\C^s$ are called \emph{square cells} and pictured as follows: 
\[
\xymatrix@R=2em @C=2em{ 
\cdot
\ar@1[r] ^{\st_{-,1}^h(A)} _{}="src" 
\ar@1[d] _{\st_{-,1}^v(A)} 
& 
\cdot
\ar@1[d] ^{\st_{+,1}^v(A)} \\
\cdot
\ar@1[r] _{\st_{+,1}^h(A)} ^{}="tgt" 
& 
\cdot
\ar@2 "src"!<0pt,-10pt>;"tgt"!<0pt,10pt> ^{A}
}
\]
and by the followings squares for identities
\[
\xymatrix@R=2em @C=2em{ 
x_1
\ar@1[r] ^{f} _{}="src" 
\ar@1[d] _{i_0^v(x_1)} 
& 
x_2
\ar@1[d] ^{i_0^v(x_2)} \\
x_1
\ar@1[r] _{f} ^{}="tgt" 
& 
x_2
\ar@2 "src"!<-7pt,-10pt>;"tgt"!<-7pt,10pt> ^{i_1^h(f)}
}
\qquad
\xymatrix@R=2em @C=2em{ 
x
\ar@1[r] ^{i_0^h(x)} _{}="src" 
\ar@1[d] _{e} 
& 
x
\ar@1[d] ^{e} \\
y
\ar@1[r] _{i_0^h(y)} ^{}="tgt" 
& 
y
\ar@2 "src"!<-7pt,-10pt>;"tgt"!<-7pt,10pt> ^{i_1^v(e)}
}
\qquad
\raisebox{-0.6cm}{\text{or simply by}}
\qquad
\xymatrix@R=2em @C=2em{ 
x_1
\ar@1[r] ^{f} _{}="src" 
\ar@1[d] _{\rotatebox{90}{=}} 
& 
x_2
\ar@1[d] ^{\rotatebox{90}{=}} \\
x_1
\ar@1[r] _{f} ^{}="tgt" 
& 
x_2
\ar@2@{} "src"!<0pt,-10pt>;"tgt"!<0pt,10pt> |-{i_1^h(f)}
}
\qquad
\xymatrix@R=2em @C=2em{ 
x
\ar@1[r] ^{=} _{}="src" 
\ar@1[d] _{e} 
& 
x
\ar@1[d] ^{e} \\
y
\ar@1[r] _{=} ^{}="tgt" 
& 
y
\ar@2@{} "src"!<0pt,-10pt>;"tgt"!<0pt,10pt> |-{i_1^v(e)}
}
\]

The compositions $\cdg^v$ (resp. $\cdg^h$) are called respectively \emph{vertical} and \emph{horizontal compositions}, and can be pictured as follows
\[
\xymatrix@C=1em@R=1em{ 
x_1
\ar[rr] ^{f_1} _{}="src" 
\ar[dd] _{e_1} 
& & 
x_2
\ar[dd]^{e_2}
\ar[rr] ^{f_2} _{}="sb" 
& & 
x_3 
\ar[dd]^{e_3}
\\
& &  & &
\\
y_1 
\ar[rr] _{g_1} ^{}="tgt" 
& & 
y_2
\ar@2 "src"!<0pt,-12pt>;"tgt"!<0pt,12pt>  ^{A} 
\ar[rr] _{g_2} ^{}="tb" 
& & 
y_3 
\ar@2 "sb"!<0pt,-12pt>;"tb"!<0pt,12pt>  ^{B} 
} 
\quad 
\raisebox{-10mm}{$\rightsquigarrow$} \quad 
\xymatrix@C=1em@R=1em{ 
x_1
\ar[rrrr] ^{f_1\circ^h f_2} _{}="src" 
\ar[dd] _{e_1} 
& &  & & 
x_3  
\ar[dd]^{e_3}\\
& &  & &\\
y_1 
\ar[rrrr] _{g_1\circ^h g_2} ^{}="tgt" 
& &  & & 
y_3 
\ar@2 "src"!<-12pt,-10pt>;"tgt"!<-12pt,10pt>  ^-{A \cdg^v B} }
\]
for all $x_i,y_i$ in $\C^o$, $f_i,g_i$ in $\C^h$, $e_i$ in $\C^v$ and $A,B$ in $\C^s$,
\[
\xymatrix @R=1em @C=1em{
x_1 
\ar[rr] ^{f} _{}="src" 
\ar[dd] _{e_1} 
& & 
x_2
\ar[dd]^{e_2} 
\\
& & 
\\
y_1 
\ar[rr] _{g} ^{}="tgt" 
\ar[dd]_{e'_1}  
& & 
y_2 
\ar[dd]^{e'_2}  
\ar@2 "src"!<0pt,-12pt>;"tgt"!<0pt,12pt>  ^{A}
\\
 & & 
\\
z_1 
\ar[rr] _{h} ^{}="tgtb" 
& & 
z_2 
\ar@2 "tgt"!<0pt,-15pt>;"tgtb"!<0pt,10pt> ^{A'} 
} 
\quad 
\raisebox{-18mm}{$\rightsquigarrow$} \quad \xymatrix@C=1em@R=1em{
x_1 
\ar[rr] ^{f} _{}="src" 
\ar[dddd] _{e_1 \circ^v e'_1} 
& & 
x_2
\ar[dddd]^{e_2 \circ^v e'_2}
\\
& & \\
& & \\
 & & \\
z_1
\ar[rr] _{h} ^{}="tgtb" 
& & 
z_2 
\ar@2 "src"!<0pt,-20pt>;"tgtb"!<0pt,20pt> |-{A \cdg^h A'}}
 \]
for all $x_i,y_i,z_i$ in $\C^o$, $f,g,h$ in $\C^h$, $e_i,e_i'$ in $\C^v$ and $A,A'$ in $\C^s$.

Similarly a double groupoid is given by the same data $(\G_1,\G_0,\st_-^\G,\st_+^\G,\circ_\G,i_\G)$, with an inverse operation $(\cdot)_\G^- : \G_1 \fl \G_1$ satisfying relations~\eqref{E:AxiomGroup1} and~\eqref{E:AxiomGroup2}. As a consequence the four related categories $\G^{sv}$, $\G^{sh}$, $\G^{vo}$ and $\G^{ho}$ are groupoids. 
For every square cell
\[
\xymatrix@R=2em @C=2em{ 
\cdot
\ar@1[r] ^{f} _{}="src" 
\ar@1[d] _{e} 
& 
\cdot
\ar@1[d] ^{e'} \\
\cdot
\ar@1[r] _{g} ^{}="tgt" 
& 
\cdot
\ar@2 "src"!<0pt,-10pt>;"tgt"!<0pt,10pt> ^{A}
}
\]
in $\G^s$, the inverse square cell with respect to $\cdg^\mu$, for $\mu\in\{v,h\}$, is denoted by $A^{-,\mu}$ and satisfy the following relations
\begin{eqn}{equation}
\label{E:RelationsInverseSquareCell}
A \cdg^{\mu} (A^{-,\mu}) = i_1^\mu(\st_{-,1}^\mu(A)),
\qquad
(A^{-,\mu}) \cdg^{\mu} A = i_1^\mu(\st_{+,1}^\mu(A)).
\end{eqn}
The sources and targets of the inverse squares $A^{-,v}$ and $A^{-,h}$ are given as follows
\[
\xymatrix@R=2.4em @C=2.4em{ 
\cdot
\ar@1[r] ^{f^-} _{}="src" 
\ar@1[d] _{e'} 
& 
\cdot
\ar@1[d] ^{e} \\
\cdot
\ar@1[r] _{g^-} ^{}="tgt" 
& 
\cdot
\ar@2 "src"!<-8pt,-10pt>;"tgt"!<-8pt,10pt> ^{A^{-,v}}
}
\qquad
\xymatrix@R=2.4em @C=2.4em{ 
\cdot
\ar@1[r] ^{g} _{}="src" 
\ar@1[d] _{e^-} 
& 
\cdot
\ar@1[d] ^{(e')^-} \\
\cdot
\ar@1[r] _{f} ^{}="tgt" 
& 
\cdot
\ar@2 "src"!<-8pt,-10pt>;"tgt"!<-8pt,10pt> ^{A^{-,h}}
}
\]

\subsubsection{Squares} 
A \emph{square} in a double category $\C$ is a quadruple $(f,g,e,e')$ such that $f,g$ are horizontal cells and $e,e'$ are vertical cells that compose as follows:
\[
\xymatrix@R=2em @C=2em{ 
u
\ar@1[r] ^-{f}
\ar@1[d] _-{e} 
& 
v
\ar@1[d] ^-{e'} \\
u'
\ar@1[r] _-{g}
& 
v'
}
\]
The \emph{boundary} of a square cell $A$ in $\C$ is the square $(\st_{-,h}(A),\st_{+,h}(A),\st_{-,v}(A),\st_{+,v}(A))$, denoted by~$\partial(A)$.
We denote by $\Sq(\C)$ the set of square cells of $\C$.

\subsubsection{$n$-categories enriched in double categories}
The coherence results for rewriting systems modulo of this article are formulated using $n$-categories enriched in double groupoids. Let us expand this notion for $n>0$.
We equip the category $\Grpd(\Grpd)$ with the cartesian product defined by 
\[
\C\times\D = (\C_1\times\D_1,\C_0\times\D_0,s_\C\times t_\C, c_\C\times c_\D, i_\C\times i_\D),
\]
for all double groupoids $\C$ and $\D$. The terminal double groupoid $\T$ has only one point cell, denoted by~$\bullet$, and identities $i_0^v(\bullet)$, $i_0^h(\bullet)$, $i_1^vi_0^h(\bullet) = i_1^hi_0^v(\bullet)$.

 An $n$-category enriched in double groupoids is an $n$-category $\Cr$ such that for all $x,y\in\Cr_{n-1}$ the homset $\Cr_{n}(x,y)$ has a double groupoid structure, whose point cells are the $n$-cells in $\Cr_{n}(x,y)$.
We denote by $\Cr_{n+1}^v$ (resp. $\Cr_{n+1}^h$, $\Cr_{n+2}^s$) the union of sets $\Cr_{n}(x,y)^v$ (resp. $\Cr_{n}(x,y)^h$, $\Cr_{n}(x,y)^s$) for all $x,y\in\Cr_{n-1}$.
An $(n+2)$-cell $A$ in $\Cr_{n+2}^s$ can be represented by the following diagram:
\[
\xymatrix@R=2em @C=2em{ 
u 
\ar[r] ^-{f} _{}="src" 
\ar[d] _-{e} 
& 
v
\ar[d] ^-{e'} \\
 u' 
 \ar[r] _-{g} ^{}="tgt" 
 &
 v' 
 \ar@2 "src"!<0pt,-10pt>;"tgt"!<0pt,10pt>  ^{A}
}
\]
with $u,u',v,v'\in\Cr_n$, $f,g\in\Cr_{n+1}^h$ and $e,e'\in\Cr_{n+1}^v$.
The composites of the $(n+2)$-cells and the identities $(n+2)$-cells are induced by the functors of double categories
\[
\star_{n-1}^{x,y,z} : \Cr_n(x,y) \times \Cr_n(y,z) \fl \Cr_n(x,z),
\qquad
\qquad
1_x : \T \fl \Cr_n(x,x),
\]
for all $x,y,z\in\Cr_{n-1}$.
The $(n-1)$-composite of an $(n+2)$-cell $A$ in $\Cr_n(x,y)$ with an $(n+2)$-cell~$B$ in $\Cr_n(y,z)$ of the form
\[
\xymatrix @R=2em @C=2em{ 
u_1
\ar[r] ^-{f_1} _{}="src" 
\ar[d] _-{e_1} 
& 
v_1
\ar[d]^-{e'_1}  
\\
u'_1 
\ar[r] _-{g_1} ^{}="tgt" 
&
v'_1 
\ar@2 "src"!<0pt,-10pt>;"tgt"!<0pt,10pt>  ^{A}
}
\qquad
\qquad
\xymatrix @R=2em @C=2em{ 
u_2
\ar[r] ^-{f_2} _{}="src" 
\ar[d] _-{e_2} 
& 
v_2
\ar[d]^-{e'_2} 
\\
u'_2 
\ar[r] _-{g_2} ^{}="tgt" 
&
v'_2 
\ar@2 "src"!<0pt,-10pt>;"tgt"!<0pt,10pt>  ^{B}
}
\]
is defined by $(n-1)$-composites of $n$-cells, vertical $(n+1)$-cells and horizontal $(n+1)$-cells as follows: 
\[
\xymatrix@C=2.5em@R=1.2em{ 
u_1\star_{n-1}u_2
\ar[rr] ^{f_1\star_{n-1}f_2} _{}="src" 
\ar[dd] _{e_1\star_{n-1}e_2} 
& & 
v_1\star_{n-1}v_2
\ar[dd]^{e'_1\star_{n-1}e'_2} 
\\
& & 
\\
u'_1 \star_{n-1} u'_2
\ar[rr] _{g_1\star_{n-1} g_2} ^{}="tgt" 
& & 
v'_1 \star_{n-1} v'_2
\ar@2 "src"!<0pt,-15pt>;"tgt"!<0pt,15pt>  ^{A\star_{n-1} B}
}
\]
By functoriality, the $(n-1)$-composite satisfies the following exchange relations:
\begin{eqn}{equation}
\label{E:ExchangeRelations1}
(A\cdg^\mu A') \star_{n-1} (B \cdg^\mu B')
\: = \: (A\star_{n-1} B) \cdg^\mu (A' \star_{n-1} B'),
\end{eqn}
\begin{eqn}{equation}
\label{E:ExchangeRelations2}
(A \cdg^\mu A') \star_{n-1} (B \cdg^\eta B') \: = \: 
((A \star_{n-1} B) \cdg^\mu (A' \star_{n-1} B)) \cdg^\eta ((A \star_{n-1} B') \cdg^\mu (A' \star_{n-1} B')).
\end{eqn}
Using the middle four interchange law~\eqref{E:MiddleFourIdentities}, the identity~\eqref{E:ExchangeRelations2} is equivalent to the following identity
\[
(A \cdg^\mu A') \star_{n-1} (B \cdg^\eta B') \: = \: ((A \star_{n-1} B) \cdg^{\eta} (A \star_{n-1} B')) \cdg^\mu ((A' \star_{n-1} B) \cdg^\eta (A' \star_{n-1} B')) 
\]
for all $\mu \ne \eta$ in $\{v,h\}$ and $(n+2)$-cells $A,A',B,B'$ such that both sides are defined.

We denote by $\DbCat$-$\Cat_n$ (resp. $\DbGrpd$-$\Cat_n$) the category of $n$-categories enriched in double categories (resp. double groupoids) and enriched $n$-functors.

\subsubsection{$2$-categories as double categories}
\label{SSS:2CatAsDoubleCategories}
From a $2$-category $\Cr$, we can deduce two canonical double categories, by setting the vertical or horizontal cells to be only identities of $\Cr$. In this way, $2$-categories can be considered as special cases of double categories.
The \emph{quintet construction} associates to $\Cr$ a double category, called the \emph{double category of quintets in $\Cr$} and denoted by ${\bf Q}(\Cr)$. The vertical and horizontal categories of ${\bf Q}(\Cr)$ are both equal to $\Cr_1$, and there is a square cell
\[
\xymatrix@R=2em @C=2em{
u
  \ar[r] ^-{f} ^-{}="1"
  \ar[d] _-{g}
&
u' 
  \ar[d] ^-{k}
\\
v
  \ar[r] _-{h} _-{}="2"
&
v' 
\ar@2 "1"!<0pt,-10pt>;"2"!<0pt,10pt> ^-{A}
}
\]
in ${\bf Q}(\Cr)$ whenever there is a $2$-cell $A: f \star_0 k \dfl g \star_0 h$ in $\Cr$. 
This defines a functor ${\bf Q}: \catego{Cat}_2 \fl \catego{DbCat}$.
Similarly, for $n \geq 2$ we associate to an $n$-category an $(n-2)$-category enriched in double categories by the quintet construction.

\section{Double coherent presentations}
\label{S:DoubleCoherentPresentation}

Recall that a \emph{coherent presentation} of an $n$-category $\Cr$ whose underlying $(n-1)$-category is free is an $(n+2,n)$-polygraph $P$ such that the underlying $(n+1)$-polygraph $P_{\leq (n+1)}$ is a presentation of $\Cr$ and $P_{n+2}$ is an acyclic extension of the free $(n+1,n)$-category $\tck{{P_{\leq (n+1)}}}$.
In this section, we introduce the structure of dipolygraph in order to define coherent presentations of $n$-categories whose underlying $(n-1)$-categories are not free. 
We also introduce double $(n+2)$-polygraphs as systems of generators for $n$-categories enriched in double groupoids. Finally, we define double coherent presentations of $n$-categories, that we will use to deduce coherent presentations from polygraphs modulo, whose generating horizontal cells describe primary rules, generating vertical cells are algebraic axioms, and square cells correspond to generating relations among primary reductions modulo the axioms.

\subsection{Double polygraphs and dipolygraphs}
\label{SS:DoublePolygraphDiPolygraphs}

\subsubsection{Square extensions}
\label{SSS:SquareExtensions}
Let $(\C^v,\C^h)$ be a pair of $n$-categories with the same underlying $(n-1)$-category $\B$. 
A \emph{square extension} of $(\C^v,\C^h)$ is a set $\Gamma$ equipped with four maps~$\st_{\alpha,n}^{\mu}$, with~$\alpha\in\{-,+\}$ and $\mu\in \{h,v\}$, as depicted by the following diagram: 
\[
\xymatrix@R=2.5em @C=4em{
& 
\Gamma
\ar@<+0.8ex>[dr] ^(.4){\st_{+,n}^h}
\ar@<-0.8ex>[dr] _(.4){\st_{-,n}^h}
\ar@<+0.8ex>[dl] ^(.4){\st_{-,n}^v}
\ar@<-0.8ex>[dl] _(.4){\st_{+,n}^v}
&
\\
\C^v
\ar@<+0.8ex>[dr] ^(.4){\st_{+,n-1}^v}
\ar@<-0.8ex>[dr] _(.4){\st_{-,n-1}^v}
&& 
\C^h
\ar@<+0.8ex>[dl] ^(.4){\st_{-,n-1}^h}
\ar@<-0.8ex>[dl] _(.4){\st_{+,n-1}^h}
\\
&\B&
}
\]
and satisfying the following relations:
\[
\st_{\alpha,n-1}^v\st_{\beta,n}^v = \st_{\beta,n-1}^h\st_{\alpha,n}^h,
\]
for all $\alpha,\beta$ in $\{-,+\}$.  
The point cells of a square $A$ in $\Gamma$ are the $(n-1)$-cells of $\B$ of the form
\[
\st_{\alpha,n-1}^\mu\st_{\beta,n}^\eta(A)
\]
with $\alpha,\beta$ in $\{-,+\}$, and $\eta,\mu$ in $\{h,v\}$. Note that by construction these four $(n-1)$-cells have the same $(n-2)$-source and $(n-2)$-target in $\B$ respectively denoted by $\st_{-,n-2}(A)$ and $\st_{+,n-2}(A)$.

A pair of $n$-categories $(\C^v,\C^h)$ has two canonical square extensions, the empty one, and the full one that contains all squares on $(\C^v,\C^h)$, denoted by $\Sq(\C^v,\C^h)$.
The \emph{Peiffer square extension} of $(\C^v,\C^h)$ is the square extension, denoted by $\peiffer{\C^v}{\C^h}$, made of squares of the forms
\[
\xymatrix @R=2em @C=3.5em {
u\star_i v 
  \ar[r] ^-{f\star_i v} 
  \ar[d] _-{u\star_i e}
& 
u'\star_i v
  \ar[d] ^-{u'\star_i e}
\\
u\star_i v' 
  \ar[r] _-{f\star_i v'} 
&
u'\star_i v'
}
\qquad\qquad
\xymatrix @R=2em @C=3.5em {
w\star_i t 
  \ar[r] ^-{w\star_i f'} 
  \ar[d] _-{e'\star_i t}
& 
w\star_i t'
  \ar[d] ^-{e'\star_i t'}
\\
w'\star_i t 
   \ar[r] _-{w'\star_i f'} 
&
w'\star_i t'
}
\]
for all $n$-cells $e,e'$ in $\C^v$ and $n$-cells $f,f'$ in $\C^h$. 

\subsubsection{Double polygraphs}
\label{SSS:DoublePolygraphDiPolygraphs}
We define a \emph{double $(n+2)$-polygraph} as a data~$P:=(P^v,P^h,P^s)$ made of 
\begin{enumerate}
\item two $(n+1)$-polygraphs $P^{v}$ and $P^{h}$, such that $P^{v}_{\leq n} = P^{h}_{\leq n}$,
\item a square extension $P^{s}$ of the pair of  free $(n+1)$-categories $((P^{v})^\ast,(P^{h})^\ast)$.
\end{enumerate}

For $0\leq k \leq n$, the generating $k$-cells of the polygraphs $P^v$ and $P^h$ are called the \emph{generating $k$-cells of~$P$}. The generating $(n+1)$-cells of $P^v$ (resp. $P^h$) are called the \emph{generating vertical} (resp. \emph{horizontal}) \emph{$(n+1)$-cells of $P$}, and the elements of $P^s$ are called the \emph{generating square $(n+2)$-cells} of $P$.

A \emph{morphism of double $(n+2)$-polygraphs} from $(P^v,P^h,P^s)$ to $(Q^v,Q^h,Q^s)$ is a triple $(f^v,f^h,f^s)$ made of two morphisms of $(n+1)$-polygraphs
\[
f^v : P^v \fl Q^v,
\qquad
f^h : P^h \fl Q^h,
\]
and a map $f^s : P^s \fl Q^s$ such that the following diagrams commute:
\[ 
\xymatrix@R=2.25em@C=3.5em{
P_{n+1}^\mu \ar [d] _-{f_{n+1}^\mu} & P^s \ar [l] _-{\st_{-,n-1}^{\mu,P}} \ar [d] ^-{f^s}  \\
Q_{n+1}^\mu & Q^s \ar [l] ^-{\st_{-,n-1}^{\mu,Q}}} 
\qquad
\qquad
\xymatrix@R=2.25em@C=3.5em{
P_{n+1}^\mu \ar [d] _-{f_{n+1}^\mu} & P^s \ar [l] _-{\st_{+,n-1}^{\mu,P}} \ar [d] ^-{f^s}  \\
Q_{n+1}^\mu & Q^s \ar [l] ^-{\st_{+,n-1}^{\mu,Q}}} \]
for $\mu$ in $\{ v,h \}$. We denote by $\dbpol_{n+2}$ the category of double $(n+2)$-polygraphs and their morphisms.

Let us define extensions of the notion of double polygraphs used in the sequel to formulate coherence and confluence results modulo.  We define a \emph{double $(n+2,n)$-polygraph} as a double $(n+2)$-polygraph whose square extension $P^s$ is defined on the pair of $(n+1,n)$-categories  $(\tck{(P^{v})},\tck{(P^{h})})$. We denote by $\dbpol_{(n+2,n)}$ the category of double $(n+2,n)$-polygraphs. In some situations, we will also consider double $(n+2)$-polygraphs whose square extension is defined on the pair of $(n+1)$-categories $(\tck{(P^{v})},(P^{h})^\ast)$ (resp. $((P^{v})^\ast,\tck{(P^{h})})$).
We respectively denote by $\dbpol_{n+2}^{v}$ (resp. $\dbpol_{n+2}^{h}$) the categories they form.

A double $(n+2)$-polygraph (resp. $(n+2,n)$-polygraph) $(P^v,P^h,P^s)$ can be pictured by the following diagram
\[
\xymatrix@C=3.15em@R=3.15em {
& 
P^{s}
\ar@<+0.5ex>[dr] ^(.525){\st_{+,n+1}^{h}}
\ar@<-0.5ex>[dr] _(.525){\st_{-,n+1}^{h}}
\ar@<+0.5ex>[dl] ^(.525){\st_{-,n+1}^{v}}
\ar@<-0.5ex>[dl] _(.525){\st_{+,n+1}^{v}}
&
\\
(P^{v})^\ast
\ar@<+0.5ex>[dr] ^(.35){\st_{-,n}^{v}}
\ar@<-0.5ex>[dr] _(.35){\st_{+,n}^{v}}
&& 
(P^{h})^\ast
\ar@<+0.5ex>[dl] ^(.35){\st_{+,n}^{h}}
\ar@<-0.5ex>[dl] _(.35){\st_{-,n}^{h}}
\\
P^{v}
\ar@<+0.5ex>[r] ^-{\st_{+,n}^{v}}
\ar@<-0.5ex>[r] _-{\st_{-,n}^{v}}
\ar[u] ^-{\iota_{n+1}^{v}}
&
P^\ast_{n}
\ar@<+0.5ex>[d] ^-{\st_{-,n-1}}
\ar@<-0.5ex>[d] _-{\st_{+,n-1}}
&
\ar@<+0.5ex>[l] ^-{\st_{-,n}^{h}}
\ar@<-0.5ex>[l] _-{\st_{+,n}^{h}}
\ar[u] _-{\iota_{n+1}^{h}}
P^{h}
\\
& P^\ast_{n-1} &
}
\qquad
\raisebox{-2cm}{
\text{(resp. }
}
\quad
\xymatrix@C=3.15em@R=3.15em {
& 
P^{s}
\ar@<+0.5ex>[dr] ^(.525){\st_{+,n+1}^{h}}
\ar@<-0.5ex>[dr] _(.525){\st_{-,n+1}^{h}}
\ar@<+0.5ex>[dl] ^(.525){\st_{-,n+1}^{v}}
\ar@<-0.5ex>[dl] _(.525){\st_{+,n+1}^{v}}
&
\\
(P^{v})^\top
\ar@<+0.5ex>[dr] ^(.35){\st_{-,n}^{v}}
\ar@<-0.5ex>[dr] _(.35){\st_{+,n}^{v}}
&& 
(P^{h})^\top
\ar@<+0.5ex>[dl] ^(.35){\st_{+,n}^{h}}
\ar@<-0.5ex>[dl] _(.35){\st_{-,n}^{h}}
\\
P^{v}
\ar@<+0.5ex>[r] ^-{\st_{+,n}^{v}}
\ar@<-0.5ex>[r] _-{\st_{-,n}^{v}}
\ar[u] ^-{\iota_{n+1}^{v}}
&
P^\ast_{n}
\ar@<+0.5ex>[d] ^-{\st_{-,n-1}}
\ar@<-0.5ex>[d] _-{\st_{+,n-1}}
&
\ar@<+0.5ex>[l] ^-{\st_{-,n}^{h}}
\ar@<-0.5ex>[l] _-{\st_{+,n}^{h}}
\ar[u] _-{\iota_{n+1}^{h}}
P^{h}
\\
& P^\ast_{n-1} &
}
\quad
\raisebox{-2cm}{\text{).}}
\]

\subsubsection{Dipolygraphs}
We define dipolygraphs as presentations by generators and relations for $\infty$-categories whose underlying $k$-categories are not necessarily free. Note that a similar notion was introduced by Burroni in~\cite{Burroni07}.
We define $n$-dipolygraphs by induction on $n\geq 0$. 
A \emph{$0$-dipolygraph} is a set. A \emph{$1$-dipolygraph} is a triple $((P_0,P_1),Q_1)$, where $(P_0,Q_1)$ is a $1$-polygraph and $P_1$ is a cellular extension of the quotient category $(P_0^\ast)_{Q_1}$. For $n\geq 2$, an \emph{$n$-dipolygraph} is a data $(P,Q)=((P_i)_{0\leq i \leq n},(Q_i)_{1 \leq i \leq n})$ made of
\begin{enumerate}[{\bf i)}]
\item a $1$-dipolygraph $((P_0,P_1),Q_1)$,
\item for every $2\leq k \leq n$, a cellular extension $Q_{k}$ of the $(k-1)$-category
\[
[P_{k-2}]_{Q_{k-1}}[P_{k-1}],
\]
where $[P_{k-2}]_{Q_{k-1}}$ denotes the $(k-2)$-category
\[
((((P_0^\ast)_{Q_1}[P_1])_{Q_2}[P_2])_{Q_3}\ldots [P_{k-2}])_{Q_{k-1}},
\]
\item for every $2\leq k \leq n$, a cellular extension $P_{k}$ of the $(k-1)$-category
\[
[P_{k-1}]_{Q_{k}}.
\]
\end{enumerate}
For $0\leq k \leq n-1$, we denote by $(P,Q)_{\leq k}$ the underlying $k$-dipolygraph $((P_i)_{0\leq i \leq k},(Q_i)_{1 \leq i \leq k})$.

\subsubsection{}
For $0\leq p\leq n$, an \emph{$(n,p)$-dipolygraph} is a data $((P_i)_{0\leq i \leq n},(Q_i)_{1 \leq i \leq n})$ such that:
\begin{enumerate}[{\bf i)}]
\item $((P_i)_{0\leq i \leq p+1},(Q_i)_{1 \leq i \leq p+1})$ is a $(p+1)$-dipolygraph,
\item for every $p+2 \leq k \leq n$, $Q_{k}$ is a cellular extension of the $(k-1,p)$-category
\[
([P_{p}]_{Q_{p+1}})(P_{p+1})_{Q_{p+2}} \ldots(P_{k-1}),
\]
\item for every $p+2\leq k \leq n$, $P_{k}$ is a cellular extension of the $(k-1,p)$-category
\[
((([P_{p}]_{Q_{p+1}})(P_{p+1}))_{Q_{p+2}} \ldots(P_{k-1}))_{Q_k}.
\]
\end{enumerate}

\subsubsection{}
We define a \emph{morphism of $(n,p)$-dipolygraphs}
\[
((P_i)_{0 \leq i \leq n}, (Q_i)_{1 \leq i \leq n})
\fl 
((P'_i)_{0 \leq i \leq n}, (Q'_i)_{1 \leq i \leq n})
\]
as a family of pairs $((f_k, g_k))_{1\leq k \leq n}$, where $f_k: P_k \fl P'_k$ and $g_k: Q_k \fl Q'_k$ are maps such that the following diagrams commute
\[ 
\xymatrix@R=2.5em@C=2.5em{
Q_k 
 \ar@<.5ex> [r] 
 \ar@<-.5ex>[r]
 \ar [d] _-{g_k}  & [P_{k-2}]_{Q_{k-1}} [P_{k-1}] 
 \ar [d] ^-{\widetilde{f}_{k-1}} \\
Q'_k 
  \ar@<.5ex> [r] 
 \ar@<-.5ex> [r] &
   [P'_{k-2}]_{Q'_{k-1}} [P'_{k-1}] } 
\qquad
\qquad
\xymatrix@R=2.5em@C=2.5em{
P_k 
 \ar@<.5ex> [r] 
 \ar@<-.5ex>[r]
 \ar [d] _-{f_k}  &
[P_{k-1}]_{Q_{k}}
 \ar [d] ^-{[f_{k-1}]_{g_k}} \\
P'_k 
  \ar@<.5ex> [r] 
 \ar@<-.5ex> [r] &
   [P'_{k-1}]_{Q'_{k}} }
\]
for every $1 \leq k \leq p+1$, and such that the following diagrams commute
\[ \xymatrix@R=2.5em@C=2.5em{
Q_k 
 \ar@<.5ex> [r] 
 \ar@<-.5ex>[r]
 \ar [d] _-{g_k}  & ([P_{p}]_{Q_{p}})(P_{p+1})_{Q_{p+2}} \ldots(P_{k-1})
 \ar [d] ^-{\widetilde{f}_{k-1}} \\
Q'_k 
  \ar@<.5ex> [r] 
 \ar@<-.5ex> [r] &
   ([P'_{p}]_{Q'_{p}})(P'_{p+1})_{Q'_{p+2}} \ldots(P'_{k-1}) } \qquad
\xymatrix@R=2.5em@C=2.5em{
P_k 
 \ar@<.5ex> [r] 
 \ar@<-.5ex>[r]
 \ar [d] _-{f_k}  &
((([P_{p}]_{Q_{p+1}})(P_{p+1}))_{Q_{p+2}} \ldots(P_{k-1}))_{Q_k}
 \ar [d] ^-{[f_{k-1}]_{g_k}} \\
P'_k 
  \ar@<.5ex> [r] 
 \ar@<-.5ex> [r] &
 ((([P'_{p}]_{Q'_{p+1}})(P'_{p+1}))_{Q'_{p+2}} \ldots(P'_{k-1}))_{Q'_k} }
\]
for every $p+2 \leq k \leq n$, where the map $\widetilde{f}_{k-1}$ is induced by the map $f_{k-1}$ and the map $[f_{k-1}]_{g_k}$ is defined by the following commutative diagram:
\[ 
\xymatrix@R=2.5em@C=2.5em{
(([P_{p}]_{Q_{p+1}})(P_{p+1}))_{Q_{p+2}} \ldots(P_{k-1}) 
 \ar [r] ^-{\pi}
 \ar [d] _-{\widetilde{f}_{k-1}}  &
((([P_{p}]_{Q_{p+1}})(P_{p+1}))_{Q_{p+2}} \ldots(P_{k-1}))_{Q_k}
 \ar [d] ^-{[f_{k-1}]_{g_k}} \\
(([P'_{p}]_{Q'_{p+1}})(P'_{p+1}))_{Q'_{p+2}} \ldots(P'_{k-1})
   \ar [r] _-{\pi '} &
 ((([P'_{p}]_{Q'_{p+1}})(P'_{p+1}))_{Q'_{p+2}} \ldots(P'_{k-1}))_{Q'_k} }
\]
We denote by $\DiPol_{(n,p)}$ the category of $(n,p)$-dipolygraphs and their morphisms.

\subsubsection{Presentations by dipolygraphs}
The $(n-1)$-category presented by an $n$-dipolygraph $(P,Q)$ is defined by
\[
\cl{(P,Q)} := ([P_{n-1}]_{Q_n})_{P_n}.
\]
A \emph{presentation of an $(n-1)$-category $\Cr$} is an $n$-dipolygraph $(P,Q)$ whose presented category $\cl{(P,Q)}$ is isomorphic to $\Cr$. 
A \emph{coherent presentation of $\Cr$} is an $(n+1,n-1)$-dipolygraph $(P,Q)$ such that
\begin{enumerate}[{\bf i)}]
\item the underlying $n$-dipolygraph $(P_{\leq n},Q_{\leq n})$ is a presentation of $\Cr$, 
\item the cellular extension $P_{n+1}$ is acyclic and the cellular extension $Q_{n+1}$ is empty.
\end{enumerate}

\subsection{Double coherent presentations}
\label{SS:DoubleCoherentPresentations}

We introduce the notion of a double coherent presentation of an $n$-category. 
We first make explicit the construction of a free $n$-category enriched in double categories generated by a double $(n+2)$-polygraph.

\subsubsection{}  {\bf What is a free double category like ?}
The question of the construction of free double categories was considered in several works  \cite{DawsonPare93,Dawson95,DawsonPare02,DawsonParePronk04}. In particular, Dawson and Pare gave in \cite{DawsonPare02} constructions of free double categories generated by double graphs and double reflexive graphs. Such free double categories always exist, and they show how to describe their cells explicitly in geometrical terms. However, they show that free double categories generated by double graphs cannot describe many of the possible compositions in free double categories.  They fixed this problem by considering double reflexive graphs as generators. 

\subsubsection{}
The coherence results in Section~\ref{S:CoherentCompletionModulo} will be formulated in free $n$-categories enriched in double categories generated by double $(n+2)$-polygraphs.
For every $n\geq 0$, let us consider the forgetful functor
\begin{eqn}{equation}
\label{E:forgetfulW}
W_n : \text{$\DbCat$-$\Cat_n$} \fl \dbpol_{n+2}
\end{eqn}
sending an $n$-category enriched in double categories $\Cr$ to the double $(n+2)$-polygraph, denoted by
\[
W_n(\Cr) = (W_{n+1}^v(\Cr), W_{n+1}^h(\Cr),W_{n+2}^s(\Cr)),
\] 
where $W_{n+1}^v(\Cr)$ (resp. $W_{n+1}^h(\Cr)$) is the underlying $(n+1)$-polygraph of the $(n+1)$-category obtained as the extension of the underlying $n$-category of $\Cr$ by the vertical (resp. horizontal) $(n+1)$-cells and~$W_{n+2}^s(\Cr)$ is the square extension generated by all squares of $\Cr$. Explicitly, for $\mu\in\{v,h\}$, consider~$\Cr_{n+1}^{\mu}$ the $(n+1)$-category whose
\begin{enumerate}
\item underlying $(n-1)$-category coincides with the underlying $(n-1)$-category of $\Cr$,
\item set of $n$-cells is given by
\[
(\Cr_{n+1}^{\mu})_n := \coprod_{x,y\in \Cr_{n-1}} (\Cr_{n}(x,y))^o,
\]
\item set of $(n+1)$-cells is given by
\[
(\Cr_{n+1}^{\mu})_{n+1} := \coprod_{x,y\in \Cr_{n-1}} (\Cr_{n}(x,y))^\mu.
\]
\end{enumerate}
The $(n-1)$-composite of $n$-cells and $(n+1)$-cells of $\Cr_{n+1}^{\mu}$ are defined by enrichment.
The $n$-composite of $(n+1)$-cells of  $\Cr_{n+1}^{\mu}$ are induced by the composition $\circ^{\mu}$.
We define $W_{n+1}^\mu(\Cr)$ as the underlying $(n+1)$-polygraph of the $(n+1)$-category $\Cr_{n+1}^{\mu}$ :
\[
W_{n+1}^\mu(\Cr) := U_{n+1}^{Pol}(\Cr_{n+1}^{\mu}). 
\]
Finally, the square extension $W_{n+2}^s(\Cr)$ is defined on the pair of $(n+1)$-categories $(\Cr_{n+1}^v, \Cr_{n+1}^h)$ by
\[ W_{n+2}^s(\Cr) := \coprod\limits_{x,y \in \Cr_{n-1}} \Cr_n(x,y)^s. \]

\begin{proposition}
For every $n\geq 0$, the forgetful functor $W_n$ defined in~\eqref{E:forgetfulW} admits a left adjoint functor~$F_n : \dbpol_{n+2} \fl \text{$\DbCat$-$\Cat_n$}$.
\end{proposition}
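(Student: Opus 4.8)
The statement asserts that the forgetful functor $W_n : \Cat_n(\DbCat) \fl \dbpol_n$ has a left adjoint $F_n$. The natural strategy is to exhibit $F_n$ explicitly as a free construction, following the description of free double categories generated by double reflexive graphs given by Dawson and Pare in~\cite{DawsonPare02}, and then to verify the universal property directly. First I would observe that the image $W_n(\Cr)$ records exactly the generating vertical cells, generating horizontal cells, and generating squares of an $n$-category enriched in double categories; dually, a double $n$-polygraph $P=(P^v,P^h,P^s)$ supplies precisely such generating data. So the task reduces to building, from $P$, the smallest $n$-category enriched in double categories whose vertical $(n+1)$-cells, horizontal $(n+1)$-cells and square $(n+2)$-cells are freely generated by $P^v$, $P^h$ and $P^s$ respectively.

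\textbf{Construction of $F_n$.}
The construction proceeds dimension by dimension. On the underlying $(n-1)$-category one takes the free $(n-1)$-category $(P^v_{\leq n})^\ast = (P^h_{\leq n})^\ast$, which is well defined since $P^v_{\leq n} = P^h_{\leq n}$. For each pair $x,y$ of $(n-1)$-cells, I would build the double category $F_n(P)_n(x,y)$ whose point cells are the $n$-cells, whose horizontal and vertical categories are the free $(n+1)$-categories generated by $P^h$ and $P^v$ over the common base, and whose square cells are obtained by freely closing the generating squares of $P^s$ under the two compositions $\cdg^v$ and $\cdg^h$, subject to the middle four interchange law~(\ref{E:MiddleFourIdentities}) and the functoriality relations {\bf i)}--{\bf iv)}. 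Concretely, following Dawson--Pare, a square cell is represented by a rectangular pasting diagram of generating squares and identity squares modulo the interchange relations, which is why passing to reflexive graphs (i.e.\ allowing the identity squares $i_1^h, i_1^v$) is essential for the construction to be well behaved. Finally one equips these double categories with the enriched composition functors $\star_{n-1}$ and identity functors $1_x$, whose compatibility is forced by the exchange relations~(\ref{E:ExchangeRelations1}) and~(\ref{E:ExchangeRelations2}); this endows $F_n(P)$ with the structure of an object of $\Cat_n(\DbCat)$, and one extends $F_n$ to morphisms by freeness.

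\textbf{The universal property.}
To conclude I would verify the adjunction $\Hom_{\Cat_n(\DbCat)}(F_n(P),\Cr) \cong \Hom_{\dbpol_n}(P, W_n(\Cr))$, naturally in $P$ and $\Cr$. The unit $\eta_P : P \fl W_n F_n(P)$ sends each generator to its canonical image. Given a morphism of double $n$-polygraphs $\phi : P \fl W_n(\Cr)$, I would define the transpose enriched $n$-functor $\widehat{\phi} : F_n(P) \fl \Cr$ by the universal properties of the free $(n-1)$-category and of the free horizontal and vertical $(n+1)$-categories, then extend it to square cells by sending a pasting diagram of generators to the corresponding composite of square cells of $\Cr$. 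The key point is that this assignment is \emph{well defined} on square cells: it must respect the interchange law and the functoriality relations, which holds precisely because these same relations hold in the target double category $\Cr$. Uniqueness and naturality are then routine, since $\widehat{\phi}$ is determined on generators and every cell is a composite of generators.

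\textbf{Main obstacle.}
The delicate step is the well-definedness of $\widehat{\phi}$ on square cells, equivalently the faithful description of the free square cells of $F_n(P)$. This is exactly the subtlety identified by Dawson and Pare: freely generated double categories admit pasting composites that are not simply rectangular, and a naive description by double graphs fails to capture all legal compositions. I would therefore rely on their explicit geometric model of free double categories on double reflexive graphs, applying it fibrewise to each homset double category $F_n(P)_n(x,y)$ and checking that the enriched compositions $\star_{n-1}$ interact with it coherently via~(\ref{E:ExchangeRelations1}) and~(\ref{E:ExchangeRelations2}). Once the normal form of square cells is in hand, the universal property follows formally.
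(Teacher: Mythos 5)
Your proposal follows essentially the same route as the paper: an explicit construction of the free object (point cells the $n$-cells of $P_n^\ast$, vertical and horizontal cells the free $(n+1)$-categories on $P^v$ and $P^h$, square cells the formal pastings of generators modulo the interchange and identity axioms), followed by a direct verification of the universal property in which the only delicate point --- well-definedness of the transpose on square cells --- is discharged by appeal to the Dawson--Pare results on composites in free double categories, exactly as the paper does via {\cite[Theorem 1.2]{DawsonPare93}}. The argument is correct; the only cosmetic slip is calling $(P^v_{\leq n})^\ast$ the underlying $(n-1)$-category when it is the underlying $n$-category over whose $(n-1)$-cells the hom-double-categories are indexed.
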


The proof is based on an explicit construction of the free $n$-category enriched in double categories generated by a double $(n+2)$-polygraph given in \eqref{SSS:freeNcategoryEnrichedDoubleGroupoids} and proof \eqref{SSS:UniversalPropertyFreeness} of universal property of free object. 

\subsubsection{Free $n$-categories enriched in double categories}
\label{SSS:freeNcategoryEnrichedDoubleGroupoids}
Let $P$ be a double $(n+2)$-polygraph. 
We construct the \emph{free $n$-category enriched in double categories on $P$}, denoted by $\cck{P}$, as follows:
\begin{enumerate}[{\bf i)}]
\item the underlying $n$-category of $\cck{P}$ is the free $n$-category $P^\ast_n$,
\item for all $(n-1)$-cells $x$ and $y$ of $P^\ast_{n-1}$, the hom-double category $\cck{P}(x,y)$ is constructed as follows
\begin{enumerate}[{\bf a)}]
\item its point cells are the $n$-cells in $P^\ast_n(x,y)$,
\item its vertical (resp. horizontal) cells are the $(n+1)$-cells of the free $(n+1)$-category $(P^v)^\ast$ (resp. $(P^h)^{\ast}$) with $(n-1)$-source $x$ and $(n-1)$-target $y$,
\item its square cells are defined recursively and contains
\begin{enumerate}[{\bf --}]
\item the square cells $A$ of $P^s$ such that $\st_{-,n-1}(A)=x$ and $\st_{+,n-1}(A)=y$,
\item the square cells $C[A]$ for all context $C$ of the $n$-category $P_n^\ast$ and $A$ in $P^s$, such that $\st_{-,n-1}(C[A])=x$ and $\st_{+,n-1}(C[A])=y$,
\item  identities square cells $i_1^h(f)$ and $i_1^v(e)$, for all $(n+1)$-cells $f$ in $(P^h)^\ast$ and $e$ in~$(P^v)^\ast$ whose $(n-1)$-source (resp. $(n-1)$-target) in $P_{n-1}^\ast$ is $x$ (resp. $y$), 
\item all formal pastings of these elements with respect to compositions $\cdg^h$ and $\cdg^v$.
\end{enumerate}
\item two square cells constructed as such formal pastings are identified modulo associativity, identity axioms of compositions $\cdg^v$ and $\cdg^h$ and the middle four interchange law~\eqref{E:MiddleFourIdentities},
\end{enumerate}
\item for all $(n-1)$-cells $x,y,z$ of $P^\ast_{n-1}$, the composition functor 
\[
\star_{n-1} : \cck{P}(x,y)\times \cck{P}(y,z) \fll \cck{P}(x,z)
\]
is defined for all
\[
\raisebox{0.8cm}{
\xymatrix @R=2em @C=2em{ 
u_1
\ar[r] ^-{f_1} _{}="src" 
\ar[d] _-{e_1} 
& 
v_1
\ar[d]^-{e'_1}  
\\
u'_1 
\ar[r] _-{g_1} ^{}="tgt" 
&
v'_1 
\ar@2 "src"!<0pt,-10pt>;"tgt"!<0pt,10pt>  ^{A_1}
}}
\quad\text{in $\cck{P}(x,y)$,\; and }\quad
\raisebox{0.8cm}{
\xymatrix @R=2em @C=2em{ 
u_2
\ar[r] ^-{f_2} _{}="src" 
\ar[d] _-{e_2} 
& 
v_2
\ar[d]^-{e'_2} 
\\
u'_2 
\ar[r] _-{g_2} ^{}="tgt" 
&
v'_2 
\ar@2 "src"!<0pt,-10pt>;"tgt"!<0pt,10pt>  ^{A_2}
}}
\quad\text{in $\cck{P}(y,z)$,}\quad
\]
by
\[
\xymatrix@C=2.5em@R=1.15em{ 
u_1\star_{n-1}u_2
\ar[rr] ^{f_1\star_{n-1}f_2} _{}="src" 
\ar[dd] _{e_1\star_{n-1}e_2} 
& & 
v_1\star_{n-1}v_2
\ar[dd]^{e'_1\star_{n-1}e'_2} 
\\
& & 
\\
u'_1 \star_{n-1} u'_2
\ar[rr] _{g_1\star_{n-1} g_2} ^{}="tgt" 
& & 
v'_1 \star_{n-1} v'_2
\ar@2 "src"!<-7pt,-15pt>;"tgt"!<-7pt,15pt>  ^{A_1\star_{n-1} A_2}
}
\]
where the square cell $A_1\star_{n-1} A_2$ is defined recursively using exchanges relations~(\ref{E:ExchangeRelations1}--\ref{E:ExchangeRelations2}) from functoriality of the composition $\star_{n-1}$, and the middle four interchange law~\eqref{E:MiddleFourIdentities},
\item for all $(n-1)$-cell $x$ of $P^\ast_{n-1}$, the identity map 
$\T \fll \cck{P}(x,x)$, where $\T$ is the terminal double groupoid, sends the one point cell $\bullet$ to $x$ and the identity $i_\alpha^\mu(\bullet)$ on $i_\alpha^\mu(x)$ for all $\mu\in\{v,h\}$ and $\alpha\in\{0,1\}$.
\end{enumerate}

\subsubsection{}
\label{SSS:UniversalPropertyFreeness}
The functor $F_n : \dbpol_{n+2} \fl  \text{$\DbCat$-$\Cat_n$}$, defined by $F_n(P)=\cck{P}$ for every double $(n+2)$-polygraph~$P$, satisfies the universal property of a free object in $\DbCat$-$\Cat_n$. 
Indeed, given a double $(n+2)$-polygraph $P$, a morphism $\eta_P : P \fl W_n(F_n(P))$ of double $(n+2)$-polygraphs, an~$n$-category enriched in double categories $\Cr$, and a morphism $\varphi : P \fl W_n(\Cr)$ of double $(n+2)$-polygraphs, there exists a unique enriched morphism $\widetilde{\varphi} : F_n(P) \fl \Cr$ such that the following diagram commutes
\[
\xymatrix@C=3em{
P \ar[r] ^-{\eta_P} 
  \ar[dr] _-{\varphi}
&
W_n(F_n(P))
  \ar[d] ^-{W_n(\widetilde{\varphi})}
\\
& W_n(\Cr)
}
\]
The functor $\widetilde{\varphi} = (\widetilde{\varphi}_k)_{0\leq k \leq n+2}$ is defined as follows.
\begin{enumerate}[{\bf i)}]
\item By construction, the morphism $\varphi$ induces morphisms of $(n+1)$-polygraphs $\varphi^{\mu} : P^{\mu} \fl W_{n+1}^{\mu}(\Cr)$, for $\mu\in\{v,h\}$. The morphism $\varphi^\mu$ extends by universal property of free $(n+1)$-categories into a functor $\widetilde{\varphi}^{\mu} : (P^{\mu})^\ast \fl \Cr_{n+1}^\mu$. 
We set $\widetilde{\varphi}_k = \varphi^{v}_k=\varphi^{h}_k$ for $0\leq k \leq n$, and
\[
\widetilde{\varphi}_{n+1}(f) = \varphi^{h}(f),
\qquad
\widetilde{\varphi}_{n+1}(e) = \varphi^{v}(e),
\]
for every horizontal $(n+1)$-cell $f$ and every vertical $(n+1)$-cell $e$.
\item By construction, every square $(n+2)$-cell $A$ in $F_n(P)$ is a composite of generating square $(n+2)$-cells in $P^s$ with respect to the compositions $\cdg^v$, $\cdg^h$ and $\star_{n-1}$.
Moreover, following {\cite[Thm. 1.2]{DawsonPare93}}, if a compatible arrangement of square cells in a double category is composable in two different ways, the results are equal modulo the associativity, identity axioms of compositions $\cdg^v$ and $\cdg^h$, and the middle four interchange law~\eqref{E:MiddleFourIdentities}. We extend the functor $\varphi$ to the functor $\widetilde{\varphi}$ by setting
\[
\widetilde{\varphi}(A \cdg^{\mu} B) = \varphi(A) \cdg^{\mu} \varphi(B),
\qquad
\widetilde{\varphi}(A \star_{n-1} B) = \varphi(A) \star_{n-1} \varphi(B),
\]
for all $\mu\in\{v,h\}$ and square generating $(n+2)$-cells $A,B$ in $P^s$ whenever the composites are defined.
\end{enumerate}

\subsubsection{Free $n$-categories enriched in double groupoids}
With a construction similar to the one for the free $n$-category enriched in double categories on a double $(n+2)$-polygraph given in \eqref{SSS:freeNcategoryEnrichedDoubleGroupoids}, we
construct the free $n$-category enriched in double groupoids generated by a double $(n+2,n)$-polygraph$P$, that we denote by $\dck{P}$.
It is obtained as the free $n$-category enriched in double categories~$\cck{P}$ having in addition 
\begin{enumerate}[{\bf i)}]
\item inverse vertical $(n+1)$-cells $e^-$ for every vertical $(n+1)$-cell $e$ in~$\cck{P}$,
\item inverse horizontal $(n+1)$-cells $f^-$ for every horizontal $(n+1)$-cell $f$ in~$\cck{P}$,
\item inverse square $(n+2)$-cells $A^{-,\mu}$ for every square $(n+2)$-cell $A$ in~$\cck{P}$, 
\end{enumerate}
\noindent satisfying the inverses axioms of groupoids for vertical and horizontal cells and relations~\eqref{E:RelationsInverseSquareCell} for square cells. 

Finally, we will also consider the free $n$-category enriched in double categories, whose vertical category is a groupoid, generated by a double $(n+2)$-polygraph $P$ in $\dbpol_{n+2}^v$, that we denote by $\dvck{P}$. In that case, we only require the invertibility of vertical $(n+1)$-cells and the invertibility of square $(n+2)$-cells with respect to $\cdg^h$-composition.

\subsubsection{Acyclicity}
\label{SSS:acyclicity}
Let $P$ be a double $(n+2,n)$-polygraph. The square extension $P^{s}$ of the pair of $(n+1,n)$-categories $(\tck{(P^{v})},\tck{(P^{h})})$ is \emph{acyclic} if for every square $S$ over $(\tck{(P^{v})},\tck{(P^{h})})$ there exists a square $(n+2)$-cell $A$ in the free $n$-category enriched in double groupoids $\dck{P}$ such that $\st(A)=S$.
For example, the set of squares over $(\tck{(P^{v})},\tck{(P^{h})})$ forms an acyclic extension.

\subsubsection{Double coherent presentations of $n$-categories}
Recall that a \emph{presentation of an $n$-category $\Cr$} is an $(n+1)$-polygraph $P$ whose presented category $\cl{P}$ is isomorphic to $\Cr$.
We define a \emph{double coherent presentation of $\Cr$} as a double $(n+2,n)$-polygraph $P$ satisfying the following two conditions:
\begin{enumerate}[{\bf i)}]
\item the $(n+1)$-polygraph $(P_{\leq n},P^{v}_{n+1}\cup P^{h}_{n+1})$ is a presentation of $\Cr$, 
\item the square extension $P^{s}$ is acyclic.
\end{enumerate}

\subsection{Globular coherent presentations from double coherent presentations}
\label{SS:GlobularCoherentPresentationsFromDoubleCoherentPresentations}

\subsubsection{}
\label{SSS:CoherenceFromDoubleCoherence}
We define a quotient functor 
\begin{eqn}{equation}
\label{E:QuotientFunctorV}
V: \dbpol_{(n+2,n)} \fl \DiPol_{(n+2,n)}
\end{eqn}
that sends a double $(n+2,n)$-polygraph $P$ to the $(n+2,n)$-dipolygraph
\begin{eqn}{equation}
\label{E:QuotientFunctorV(P)}
V(P)= ((P_0, \dots, P_{n+2}), (Q_1,\ldots,Q_{n+2})) 
\end{eqn}
defined as follows:
\begin{enumerate}[{\bf i)}]
\item $(P_0,\ldots, P_n)$ is the underlying $n$-polygraph $P_{\leq n}^v=P_{\leq n}^h$,
\item for every $1\leq i \leq n$, the cellular extension $Q_i$ is empty,
\item $Q_{n+1}$ is the cellular extension 
$\xymatrix{ P_{n+1}^v \ar@<.55ex> [r] ^-{\st_{-,n}^v}
\ar@<-.55ex> [r] _-{\st_{+,n}^v} & P_n^\ast}$,
\item $P_{n+1}$ is the cellular extension 
$\xymatrix{ P_{n+1}^h           
\ar@<.55ex> [r] ^-{\widetilde{\st}_{-,n}^h}
\ar@<-.55ex> [r] _-{\widetilde{\st}_{+,n}^h} & (P_n^\ast)_{P_{n+1}^v} }$, where $\widetilde{\st}_{\mu,n}^h := \st_{\mu,n}^h \circ \pi$, for every $\mu$ in $\{ -,+ \}$, and $\pi: P_n^\ast \twoheadrightarrow (P_n^\ast)_{P_{n+1}^v}$ denotes the canonical projection sending an~$n$-cell $u$ in $P_n^\ast$ to its class modulo $P_{n+1}^v$, denoted by $[u]^v$. For an $(n+1)$-cell $f : u \fl v$ in $P_{n+1}^h$, we denote by $[f]^v : [u]^v \fl [v]^v$ the corresponding element in $P_{n+1}$,
\item the cellular extension $Q_{n+2}$ is empty,
 \item $P_{n+2}$ is defined as the cellular extension $\xymatrix{ P^s        
\ar@<.55ex> [r] ^-{\check{s}}
\ar@<-.55ex> [r] _-{\check{t}} & (P_n^\ast)_{P_{n+1}^v} (P_{n+1}^h)}$, where the maps $\check{s}$ and $\check{t}$ are defined by the following commutative diagrams:
\[ \xymatrix@R=3em@C=3em{
P_s 
 \ar@<+0.5ex>[d] ^(.5){\st_{-,n+1}^h}
 \ar@<-0.5ex>[d] _(.5){\st_{+,n+1}^h}
 \ar@/^5ex/@<+0.5ex>[dr] ^-{\check{s}}
 \ar@/^5ex/@<-0.5ex>[dr] _-{\check{t}}
 & \\
 (P_{n+1}^h)^\top 
  \ar[r] _-{F}
  \ar@<+0.5ex>[d] ^(.5){\st_{-,n}^{h}}
  \ar@<-0.5ex>[d] _(.5){\st_{+,n}^{h}}  
 & 
 (P_n^\ast)_{P_{n+1}^v} (P_{n+1}^h) 
  \ar@<+0.5ex>[d] ^(.5){\cl{\widetilde{\st}}_{-,n}^h}
  \ar@<-0.5ex>[d] _(.5){\cl{\widetilde{\st}}_{+,n}^h} \\
 P_n^\ast \ar[r] _-{\pi} & (P_n^\ast)_{P_{n+1}^v} } \]
where the maps $\cl{\widetilde{\st}}_{-,n}^h$ and $\cl{\widetilde{\st}}_{+,n}^h$ are induced from $\widetilde{\partial}_{-,n}^h$ and $\widetilde{\partial}_{+,n}^h$, and the $(n+1)$-functor $F$ is defined by:
\begin{enumerate}[{\bf a)}]
\item $F$ is the identity functor on the underlying $(n-1)$-category $P_{n-1}^\ast$,
\item $F$ sends an $n$-cell $u$ in $P_n^\ast$ to its equivalence class $[u]^v$ modulo $P_{n+1}^v$,
\item $F$ sends an $(n+1)$-cell $f:u \fl v$ in $\tck{(P_{n+1}^h)}$ to the $(n+1)$-cell $[f]^v: [u]^v \fl [v]^v$ in $(P_n^\ast)_{P_{n+1}^v}(P_{n+1}^h)$ defined as follows
\begin{enumerate}[{\bf --}]
\item for every $f$ in $P_{n+1}^h$, $[f]^v$ is defined by {\bf iv)},
\item $F$ is extended to the $(n+1)$-cells of $\tck{(P_{n+1}^h)}$ by functoriality by setting
\[
[\,C[g]\,]^v
= 
[x_n]^v\star_{n-1} x_{n-1} \star_{n-2} \ldots \star_1(x_1 \star_0 [g]^v \star_0 y_1)\star_1 \ldots \star_{n-2} y_{n-1}\star_{n-1} [y_n]^v,  
\]
for all whisker $C=x_n \star_{n-1} \ldots \star_1(x_1\star_0 - \star_0 y_1)\star_1 \ldots \star_{n-1} y_n$ of $\tck{(P_{n+1}^h)}$ and $(n+1)$-cell $g$ in $\tck{(P_{n+1}^h)}$, and
\[
[f_1\star_n f_2 ]^v = [f_1]^v \star_n [f_2]^v,
\]
for all $(n+1)$-cells $f_1,f_2$ in $\tck{(P_{n+1}^h)}$.
\end{enumerate}
\end{enumerate}
\end{enumerate}

\subsubsection{}
\label{SSS:NotationCrochetV}
Given a generating square $(n+2)$-cell  
\[
\xymatrix @R=2em @C=2em{
u
  \ar[r] ^-{f} ^-{}="1"
  \ar[d] _-{g}
&
u' 
  \ar[d] ^-{k}
\\
v
  \ar[r] _-{h} _-{}="2"
&
v' 
\ar@2 "1"!<0pt,-10pt>;"2"!<0pt,10pt> ^-{A} }
\] 
of $P^s$, we denote by $[A]^v$ the generating $(n+2)$-cell of the globular cellular extension $P_{n+2}$ on $(P_n^\ast)_{P_{n+1}^v} (P_{n+1}^h)$ defined in~\eqref{E:QuotientFunctorV(P)} as follows:
\[ 
\xymatrix{
[u]^v = [u']^v 
     \ar @/^5ex/ [rr] ^{[f]^v} _-{}="1"
	\ar @/_5ex/ [rr] _{[g]^v}  ^-{}="2" 
	 & & 
	 [v]^v = [v']^v
\ar@2 "1"!<0pt,-10pt>;"2"!<0pt,10pt> ^-{[A]^v}
} 
\]
Note that by construction, in the $(n+2,n)$-category $((P_n^\ast)_{P_{n+1}^v}(P_{n+1}^h))(P_{n+2})$ the relations
\[ 
[A]^v \star_n [A']^v = [A \cdg^v A']^v, \qquad [A]^v \star_{n+1} [A']^v = [A \cdg^h A']^v,
\]
hold for all generating square $(n+2)$-cells $A,A'$ in $P^s$ such that these compositions make sense. 

\begin{proposition}
\label{P:QuotientAcyclic}
Let $P$ be a double $(n+2,n)$-polygraph. If the square extension $P^s$ is acyclic then the cellular extension $P_{n+2}$  of the $(n+1)$-category $(P_n^\ast)_{P_{n+1}^v} (P_{n+1}^h)$ defined in~\eqref{E:QuotientFunctorV(P)} is acyclic.

In particular, if $P$ is a double coherent presentation of an~$n$-category $\Cr$. 
Then, the $(n+2,n)$-dipolygraph $V(P)$ is a globular coherent presentation of the quotient $n$-category $(P_n^\ast)_{P_{n+1}^v}$, that is the $n$-category is isomorphic to $\cl{V(P)}_{\leq (n+1)}$ and $P_{n+2}$ is an acyclic extension of $(P_n^\ast)_{P_{n+1}^v}(P_{n+1}^h)$.
\end{proposition}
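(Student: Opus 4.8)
The plan is to prove the first, general assertion — that acyclicity of the square extension $P^s$ forces acyclicity of the globular extension $P_{n+2}$ — and then read off the ``in particular'' from it together with a direct inspection of $V$. First I would unwind acyclicity: $P_{n+2}$ is acyclic over the $(n+1,n)$-category $(P_n^\ast)_{P_{n+1}^v}(P_{n+1}^h)$ precisely when every $(n+1)$-sphere $(\Phi,\Psi)$ of that category bounds an $(n+2)$-cell of $\big((P_n^\ast)_{P_{n+1}^v}(P_{n+1}^h)\big)(P_{n+2})$. The whole argument rests on the map $A\mapsto[A]^v$ of \ref{SSS:NotationCrochetV}: it carries a square cell of $\dck{P}$ to a globular $(n+2)$-cell, it satisfies $[A\cdg^v A']^v=[A]^v\star_n[A']^v$ and $[A\cdg^h A']^v=[A]^v\star_{n+1}[A']^v$, it respects $\star_{n-1}$ and inverses, and it sends every vertical $(n+1)$-cell of $\tck{(P^v)}$ to an identity because $(P_n^\ast)_{P_{n+1}^v}$ collapses $P_{n+1}^v$.

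The key lemma I would isolate is that any two $(n+1)$-cells of the form $[f]^v,[g]^v$, with $f,g$ in $\tck{(P^h)}$ and parallel in $(P_n^\ast)_{P_{n+1}^v}(P_{n+1}^h)$, are joined by an $(n+2)$-cell. Parallelism gives $[\st_-(f)]^v=[\st_-(g)]^v$ and $[\st_+(f)]^v=[\st_+(g)]^v$, hence vertical $(n+1)$-cells $e\colon\st_-(f)\fl\st_-(g)$ and $e'\colon\st_+(f)\fl\st_+(g)$ in $\tck{(P^v)}$; the quadruple $(f,g,e,e')$ is then a square over $(\tck{(P^v)},\tck{(P^h)})$, so acyclicity of $P^s$ yields a square cell $A$ of $\dck{P}$ with $\st(A)=(f,g,e,e')$, and $[A]^v$ is the sought $(n+2)$-cell $[f]^v\Rrightarrow[g]^v$.

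To reach an arbitrary sphere I would decompose each $(n+1)$-cell as a $\star_n$-composite of generators in context $[f_i]^v$, every one of which is liftable to $\tck{(P^h)}$. When $\Phi$ and $\Psi$ run through the same sequence of intermediate $P_{n+1}^v$-classes, the key lemma applied factor by factor, together with closure of the relation ``bounded by an $(n+2)$-cell'' under $\star_n$, already gives $\Phi\Rrightarrow\Psi$. The hard part, and the step on which I would spend the most effort, is that $\Phi$ in general does \emph{not} lift to a single cell of $\tck{(P^h)}$: consecutive lifts $f_i$ agree only up to a vertical connector $\epsilon_i\in\tck{(P^v)}$ between two representatives of one class, and two parallel cells may visit entirely different classes. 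I would resolve this by a two-dimensional tiling: lifting $\Phi$ and $\Psi$ piecewise produces two ``staircases'' of horizontal cells joined by vertical connectors and sharing their two endpoints up to connectors; I would tile the region between them by elementary squares whose horizontal edges are pieces of the two paths and whose vertical edges are connectors, fill each such square over $(\tck{(P^v)},\tck{(P^h)})$ by acyclicity of $P^s$, paste the fillers with $\cdg^v$ and $\cdg^h$, and apply $[\,\cdot\,]^v$. Since every connector maps to an identity and $\star_n$ glues along $P_{n+1}^v$-classes, the image is an $(n+2)$-cell whose boundary is exactly $(\Phi,\Psi)$; the delicate point is to keep the tiling genuinely composable in $\dck{P}$ (whose compositions are on the nose) while guaranteeing that its image acquires the prescribed globular boundary in the quotient (whose compositions are along classes).

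Finally, for the ``in particular'' I would observe that a double coherent presentation has $P^s$ acyclic by definition, so the first part makes $P_{n+2}$ an acyclic extension of $(P_n^\ast)_{P_{n+1}^v}(P_{n+1}^h)$, while $Q_{n+2}$ is empty by construction of $V$. It then remains to identify the presented $n$-category. By construction $V$ leaves the underlying $n$-polygraph equal to $P_n$, makes $Q_{n+1}=P_{n+1}^v$ the only nonempty extension among $Q_1,\dots,Q_{n+1}$, and places $P_{n+1}^h$ and $P^s$ in degrees $>n$; adjoining these higher cells does not alter $n$-cells, so the $n$-category underlying $\cl{V(P)}$ is $[P_n]_{Q_{n+1}}=(P_n^\ast)_{P_{n+1}^v}$. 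Hence $V(P)$ is a globular coherent presentation of $(P_n^\ast)_{P_{n+1}^v}$, as claimed.
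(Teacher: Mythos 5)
Your ``key lemma'' is, word for word, the paper's entire proof: lift a sphere $([f]^v,[g]^v)$ with $f,g$ in $\tck{(P^h)}$ to a square $(f,g,e,e')$ over $(\tck{(P^v)},\tck{(P^h)})$, fill it by acyclicity of $P^s$, and push the filler through $[\,\cdot\,]^v$. The paper stops there, tacitly assuming every $(n+1)$-sphere of $(P_n^\ast)_{P_{n+1}^v}(P_{n+1}^h)$ is of this form. You are right that it is not, and the step you flag as ``the hard part'' is where your proposal has a genuine gap: the tiling you describe needs, at each intermediate stage, horizontal cells of $\tck{(P^h)}$ joining the two staircases, and such cells need not exist. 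Concretely, take $n=0$, $P_0=\{u_1,u_2,v_1,v_2,w_1,w_2\}$, vertical generators $u_1\fl u_2$, $v_1\fl v_2$, $w_1\fl w_2$, horizontal generators $p:u_1\fl v_1$, $q:v_2\fl w_1$, $r:u_2\fl w_2$, and $P^s=\Sqr(\tck{(P^v)},\tck{(P^h)})$, which is acyclic. Each horizontal component of $\tck{(P^h)}$ meets each vertical class in at most one vertex and no two components join the same pair of classes, so every square over this pair has equal horizontal faces; hence every generating $2$-cell of $P_2$, and therefore every $2$-cell of the free $(2,0)$-category it generates, has equal source and target. Yet $[p]\star_0[q]$ and $[r]$ are distinct parallel $1$-cells of the free groupoid $(P_0^\ast)_{P^v_1}(P^h_1)$ (two distinct reduced paths in a triangle). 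No tiling, and indeed no argument, fills this sphere.

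The upshot is that the difficulty you isolated is not merely technical: the conclusion fails for spheres whose components are not in the image of $F$, so the statement needs either to be read as restricted to such spheres, or supplemented by a hypothesis guaranteeing that every $\star_n$-composite along $P^v_{n+1}$-classes is connected by $P_{n+2}$-cells to the image of an honestly composable lift. The paper's applications live in the first regime: in Theorem~\ref{T:QuotientCoherentPresentation} the relevant $(n+1)$-cells arise as images of rewriting sequences of $\tck{S}$, and convergence modulo $E$ together with $E$-normalization supplies the lifts. Your reading of the ``in particular'' part (identifying $\cl{V(P)}_{\leq(n+1)}$ with $(P_n^\ast)_{P^v_{n+1}}$ directly from the definition of $V$) agrees with the paper and is unproblematic.
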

\begin{proof}
Given an $(n+1)$-sphere $\gamma := ([f]^v,[g]^v)$ in $(P_n^\ast)_{P_{n+1}^v} (P_{n+1}^h)$, by definition of the functor $V$ in~\eqref{E:QuotientFunctorV}, there exists an~$(n+1)$-square
\[
S := \raisebox{0.65cm}{
\xymatrix @R=2em @C=2em{
u
  \ar[r] ^-{f} ^-{}="1"
  \ar[d] _-{e}
&
u' 
  \ar[d] ^-{e'}
\\
v
  \ar[r] _-{g} _-{}="2"
&
v' 
}}
\]
in $(\tck{(P^v_{n+1})},\tck{(P^h_{n+1})})$, such that $F(f) = [f]^v$ and $F(g) = [g]^v$ and $V(S)=\gamma$.
By acyclicity assumption, there exists a square $(n+2)$-cell $A$ in the free $n$-category enriched in double groupoids $\dck{(P^v,P^h,P^s)}$ such that $\partial(A) = S$. Then $[A]^v$ is an~$(n+2)$-cell in $(P_n^\ast)_{P_{n+1}^v}(P_{n+1}^h))(P_{n+2})$ such that $\st([A]^v) = \gamma$.
Finally, the fact that $V(P)_{\leq (n+1)}$ is a presentation of the quotient $n$-category $(P_n^\ast)_{P_{n+1}^v}$ follows from the definition of the functor $V$ and the fact that the $(n+1)$-polygraph $(P_n,P_{n+1}^v\cup P_{n+1}^h)$ is a presentation of~$\Cr$.
\end{proof}

\subsection{Examples}

\def\gppol{\mathsf{Gp}}
\def\compol{\mathsf{Com}}
\def\pivpol{\mathsf{Piv}}

We show how to define coherent presentations of algebraic structures in terms of dipolygraphs in the cases of groups, commutative monoids and pivotal categories.

\subsubsection{Coherent presentations of groups}
\label{SSS:CoherentPresentationGroups}
A presentation of a group $G$ by generators $X$ and relations $R$ is a $(2,1)$-dipolygraph $((P_0,P_1,P_2),(Q_1,Q_2))$ such that
\begin{enumerate}[{\bf i)}]
\item $P_0$ is a singleton, the cellular extension $Q_1$ is empty, and the generating $1$-cells in $P_1$ are elements of~$X$ seen as loops on the $0$-cell,
\item the cellular extension $Q_2$ of $P_1^\ast$ is made of the generating $2$-cells 
\[
x x^- \dfl  1,
\qquad
x^- x \dfl 1,
\]
for every $x$ in $P_1$,
\item the cellular extension $P_2$ on the free group ${P_1}^\ast_{Q_2}$ is made of generating $2$-cells of the form $r \dfl 1$ and $r^- \dfl 1$, for $r=1$ being a relation in $R$. 
\end{enumerate}
A  coherent presentation of $G$ is a $(3,1)$-dipolygraph $((P_i)_{0 \leq i \leq 3},(Q_j)_{1 \leq j \leq 3})$ such that the underlying $(2,1)$-dipolygraph $((P_0,P_1,P_2),(Q_1,Q_2))$ is a presentation of $G$, the cellular extension $Q_3$ is empty, and $P_3$ is an acyclic extension of ${P_1}^\ast_{Q_2}(P_2)$.

\subsubsection{Coherent presentation of commutative monoids}
\label{SSS:CoherentPresentationCommutativeMonoid}

A presentation of a commutative monoid $M$ by generators $X$ and relations $R$ corresponds to a $(2,1)$-dipolygraph $((P_0,P_1,P_2),(Q_1,Q_2))$ such that
\begin{enumerate}[{\bf i)}]
\item $P_0$ is a singleton, the cellular extension $Q_1$ is empty, and the generating $1$-cells in $P_1$ are elements of~$X$ seen as loops on the $0$-cell,
\item the cellular extension $Q_2$ of $P_1^\ast$ is made of the generating $2$-cells \[
\alpha_{i,j} : x_i x_j \dfl x_j x_i,
\] 
for all $x_i,x_j$ in $P_1$, such that $x_i > x_j$ for a given total order $>$ on $P_1$,
\item the cellular extension $P_2$ on the free commutative monoid ${P_1}^\ast_{Q_2}$ is made of relations in $R$ with a chosen orientation.
\end{enumerate}
A  coherent presentation of $M$ is a $(3,1)$-dipolygraph $((P_i)_{0 \leq i \leq 3},(Q_j)_{1 \leq j \leq 3})$ such that the underlying $(2,1)$-dipolygraph $((P_0,P_1,P_2),(Q_1,Q_2))$ corresponds to a presentation of $M$, the cellular extension $Q_3$ is empty, and $P_3$ is an acyclic extension of the $2$-category ${P_1}^\ast_{Q_2}(P_2)$.

\subsubsection{Coherent presentation of monoidal pivotal categories}
Recall that a (strict monoidal) pivotal category $\Cr$ is a monoidal category, seen as $2$-category with only one $0$-cell, in which every $1$-cell $p$ has a right dual $1$-cell $\hat{p}$, which is also a left-dual, that is there are $2$-cells 
\begin{eqn}{equation}
\label{E:2-cellsEtaEpsilon}
\eta_p^- : 1 \dfl \hat{p} \star_0 p,
\quad
\eta_p^+ : 1 \dfl p \star_0 \hat{p},
\quad
\varepsilon_p^- : \hat{p} \star_0 p \dfl 1,
\quad
\text{and}
\quad
\varepsilon_p^+ : p \star_0 \hat{p} \dfl 1,
\end{eqn}
respectively represented by the following diagrams: 
\begin{eqn}{equation}
\label{E:UnitCounit}
\cupdb{\hat{p}}{p}, \quad \cupdb{p}{\hat{p}}, \quad \raisebox{2mm}{$\capdb{\hat{p}}{p}$}, \quad \text{and} \quad \raisebox{2mm}{$\capdb{p}{\hat{p}}$}. 
\end{eqn}
These $2$-cells satisfy the relations
\begin{align*}
(\varepsilon_p^+ \star_0 1_p ) \star_1 (1_p \star_0 \eta_p^-)
&=
1_p
=
(1_p \star_0 \varepsilon_p^-) \star_1 (\eta_p^+ \star_0 1_p),
\\
(\varepsilon_p^- \star_0 1_{\hat{p}} ) \star_1 (1_{\hat{p}} \star_0 \eta_p^+)
&=
1_{\hat{p}}
=
(1_{\hat{p}} \star_0 \eta_p^+) \star_1 (\eta_p^- \star_0 1_{\hat{p}}),
\end{align*}
that can be diagrammatically depicted as follows
\[  
\begin{tikzpicture}[baseline = 0,scale=1]
  \draw[-,black,thick] (0.3,0) to (0.3,.4);
	\draw[-,black,thick] (0.3,0) to[out=-90, in=0] (0.1,-0.4);
	\draw[-,black,thick] (0.1,-0.4) to[out = 180, in = -90] (-0.1,0);
	\draw[-,black,thick] (-0.1,0) to[out=90, in=0] (-0.3,0.4);
	\draw[-,black,thick] (-0.3,0.4) to[out = 180, in =90] (-0.5,0);
  \draw[-,black,thick] (-0.5,0) to (-0.5,-.4);
  \node at (-0.2,0.65) {$\varepsilon_p^+$};
  \node at (0.1,-0.65) {$\eta_p^-$};
  \node at (-0.5,-0.6) {$p$};
\end{tikzpicture}  = \; \begin{tikzpicture}[baseline=0, scale=1]
\draw[-,black,thick] (0,-0.4) to (0,0.4) ;
\node at (0,-0.65) {$p$} ;
\end{tikzpicture}
= \; \begin{tikzpicture}[baseline = 0, scale=1]
  \draw[-,black,thick] (0.3,0) to (0.3,-.4);
	\draw[-,black,thick] (0.3,0) to[out=90, in=0] (0.1,0.4);
	\draw[-,black,thick] (0.1,0.4) to[out = 180, in = 90] (-0.1,0);
	\draw[-,black, thick] (-0.1,0) to[out=-90, in=0] (-0.3,-0.4);
	\draw[-,black, thick] (-0.3,-0.4) to[out = 180, in =-90] (-0.5,0);
  \draw[-,black, thick] (-0.5,0) to (-0.5,.4);
   \node at (-0.2,-0.65) {$\eta_p^+$};
  \node at (0.1,0.65) {$\varepsilon_p^-$};
  \node at (0.5,-0.6) {$p$};
\end{tikzpicture},
\qquad \qquad 
\begin{tikzpicture}[baseline = 0,scale=1]
  \draw[-,black, thick] (0.3,0) to (0.3,.4);
	\draw[-,black, thick] (0.3,0) to[out=-90, in=0] (0.1,-0.4);
	\draw[-,black, thick] (0.1,-0.4) to[out = 180, in = -90] (-0.1,0);
	\draw[-,black, thick] (-0.1,0) to[out=90, in=0] (-0.3,0.4);
	\draw[-,black, thick] (-0.3,0.4) to[out = 180, in =90] (-0.5,0);
  \draw[-,black, thick] (-0.5,0) to (-0.5,-.4);
  \node at (-0.2,0.65) {$\varepsilon_p^-$};
  \node at (0.1,-0.65) {$\eta_p^+$};
  \node at (-0.5,-0.65) {$\hat{p}$};
\end{tikzpicture}  = \; \begin{tikzpicture}[baseline=0, scale=1]
\draw[-,thick,black] (0,-0.4) to (0,0.4) ;
\node at (0,-0.65) {$\hat{p}$} ;
\end{tikzpicture}
= \; \begin{tikzpicture}[baseline = 0, scale=1]
  \draw[-,black, thick] (0.3,0) to (0.3,-.4);
	\draw[-,black, thick] (0.3,0) to[out=90, in=0] (0.1,0.4);
	\draw[-,black, thick] (0.1,0.4) to[out = 180, in = 90] (-0.1,0);
	\draw[-,black, thick] (-0.1,0) to[out=-90, in=0] (-0.3,-0.4);
	\draw[-,black, thick] (-0.3,-0.4) to[out = 180, in =-90] (-0.5,0);
  \draw[-,thick,black, thick] (-0.5,0) to (-0.5,.4);
   \node at (-0.2,-0.65) {$\eta_p^-$};
  \node at (0.1,0.65) {$\varepsilon_q^+$};
  \node at (0.4,-0.65) {$\hat{p}$};
\end{tikzpicture}.
\]
Any $2$-cell $f: p \dfl q$ in $\Cr$ is cyclic with respect to the biadjunctions $\hat{p} \vdash p \vdash \hat{p}$ and $\hat{q} \vdash q \vdash \hat{q}$, defined respectively by the family of $2$-cells
$(\eta_p^-, \eta_p^+, \varepsilon_p^-,\varepsilon_p^+)$ and $(\eta_q^-,
\eta_q^+, \varepsilon_q^-,\varepsilon_q^+)$.
That is, $f^\ast = {}^\ast f$, where $f^\ast$ and ${}^\ast f$ are respectively the right and left duals of $f$, defined using the right and left adjunction as follows:
\[ ^* f := \begin{tikzpicture}[baseline = 0,scale=1]
  \draw[-,black, thick] (0.3,0) to (0.3,.4);
	\draw[-,black, thick] (0.3,0) to[out=-90, in=0] (0.1,-0.4);
	\draw[-,black, thick] (0.1,-0.4) to[out = 180, in = -90] (-0.1,0);
	\draw[-,black, thick] (-0.1,0) to[out=90, in=0] (-0.3,0.4);
	\draw[-,black, thick] (-0.3,0.4) to[out = 180, in =90] (-0.5,0);
  \draw[-,black, thick] (-0.5,0) to (-0.5,-.4);
  \node at (-0.25,0.65) {$\varepsilon_q^-$};
  \node at (0.13,-0.65) {$\eta_p^+$};
  \node at (-0.1,0) {$\bullet$};
  \node at (0.05,0) {$\scriptstyle{f}$};
  \node at (0.35,0.65) {$\hat{p}$};
  \node at (-0.5,-0.65) {$\hat{q}$};
\end{tikzpicture},
 \qquad \qquad \qquad
 f ^* := \begin{tikzpicture}[baseline = 0, scale=1]
  \draw[-,black, thick] (0.3,0) to (0.3,-.4);
	\draw[-,black, thick] (0.3,0) to[out=90, in=0] (0.1,0.4);
	\draw[-,black, thick] (0.1,0.4) to[out = 180, in = 90] (-0.1,0);
	\draw[-,black, thick] (-0.1,0) to[out=-90, in=0] (-0.3,-0.4);
	\draw[-,black, thick] (-0.3,-0.4) to[out = 180, in =-90] (-0.5,0);
  \draw[-,thick,black, thick] (-0.5,0) to (-0.5,.4);
   \node at (-0.1,0) {$\bullet$};
  \node at (-0.25,0) {$\scriptstyle{f}$};
  \node at (-0.5,0.65) {$\hat{p}$};
   \node at (-0.2,-0.65) {$\eta_p^-$};
  \node at (0.1,0.65) {$\varepsilon_q^+$};
  \node at (0.35,-0.65) {$\hat{q}$};
\end{tikzpicture}.
 \]
We refer the reader to \cite{JoyalStreet91,CockettKoslowskiSeely00} for more details about the notion of pivotal monoidal category.

A presentation of a (strict) monoidal pivotal category by generating $1$-cells $X_1$, generating $2$-cells $X_2$ and relations $R$ corresponds to a $(3,2)$-dipolygraph $((P_i)_{0 \leq i \leq 3}, (Q_j)_{1 \leq j \leq 3})$ such that
\begin{enumerate}[{\bf i)}]
\item $P_0$ is a singleton, the cellular extension $Q_1$ is empty, and $P_1 = X_1 \cup \widehat{X}_1$, where $\widehat{X}_1 :=\{\hat{p} \;|\; p\in X_1\}$ is the set of bi-duals of elements of $X_1$,
\item the cellular extension $Q_2$ on $P_1^\ast$ is empty, and $P_2 = X_2 \;\cup\; \{\eta_p^-, \eta_p^+, \epsilon_p^-, \epsilon_p^+ \;\; |\;\; p \in X_1 \},$ where the $2$-cells $\eta_p^-$, $\eta_p^+$, $\epsilon_p^-$, $\epsilon_p^+$ are defined by~\eqref{E:2-cellsEtaEpsilon},
\item the cellular extension $Q_3$ on $P_1^\ast[P_2]$ is made of the following generating $3$-cells: 
\[ \begin{tikzpicture}[baseline = 0,scale=1]
  \draw[-,black, thick] (0.3,0) to (0.3,.4);
	\draw[-,black, thick] (0.3,0) to[out=-90, in=0] (0.1,-0.4);
	\draw[-,black, thick] (0.1,-0.4) to[out = 180, in = -90] (-0.1,0);
	\draw[-,black, thick] (-0.1,0) to[out=90, in=0] (-0.3,0.4);
	\draw[-,black, thick] (-0.3,0.4) to[out = 180, in =90] (-0.5,0);
  \draw[-,black, thick] (-0.5,0) to (-0.5,-.4);
  \node at (-0.25,0.65) {$\varepsilon_q^-$};
  \node at (0.13,-0.65) {$\eta_p^+$};
  \node at (-0.1,0) {$\bullet$};
  \node at (0.05,0) {$\scriptstyle{f}$};
  \node at (0.35,0.65) {$\hat{p}$};
  \node at (-0.5,-0.65) {$\hat{q}$};
\end{tikzpicture} \Rrightarrow \begin{tikzpicture}[baseline=0, scale=1]
\draw[-,black,thick] (0,-0.4) to (0,0.4) ;
\node at (0,-0.65) {$\hat{q}$};
\node at (0,0.65) {$\hat{p}$};
\node at (0.23,0) {$\scriptstyle{{}^\ast f}$};
\node at (0,0) {$\bullet$};
\end{tikzpicture},
\qquad \qquad \qquad \begin{tikzpicture}[baseline = 0, scale=1]
  \draw[-,black, thick] (0.3,0) to (0.3,-.4);
	\draw[-,black, thick] (0.3,0) to[out=90, in=0] (0.1,0.4);
	\draw[-,black, thick] (0.1,0.4) to[out = 180, in = 90] (-0.1,0);
	\draw[-,black, thick] (-0.1,0) to[out=-90, in=0] (-0.3,-0.4);
	\draw[-,black, thick] (-0.3,-0.4) to[out = 180, in =-90] (-0.5,0);
  \draw[-,thick,black, thick] (-0.5,0) to (-0.5,.4);
   \node at (-0.1,0) {$\bullet$};
  \node at (-0.25,0) {$\scriptstyle{f}$};
  \node at (-0.5,0.65) {$\hat{p}$};
   \node at (-0.2,-0.65) {$\eta_p^-$};
  \node at (0.1,0.65) {$\varepsilon_q^+$};
  \node at (0.35,-0.65) {$\hat{q}$};
\end{tikzpicture} \Rrightarrow \begin{tikzpicture}[baseline=0, scale=1]
\draw[-,black,thick] (0,-0.4) to (0,0.4) ;
\node at (0,-0.65) {$\hat{q}$};
\node at (0,0.65) {$\hat{p}$};
\node at (0.23,0) {$\scriptstyle{f^\ast}$};
\node at (0,0) {$\bullet$};
\end{tikzpicture},
\]
for every generating $2$-cell $f$ in $X_2$ or $f$ is an identity cell,
\item the cellular extension $P_3$ is made of relations in $R$ with a chosen orientation.
\end{enumerate}
A coherent presentation of $\Cr$ is a $(4,2)$-dipolygrah $((P_i)_{1 \leq i \leq 4},(Q_j)_{1 \leq j \leq 4})$ such that the underlying $(3,2)$-dipolygraph $((P_i)_{1 \leq i \leq 3}, (Q_j)_{1 \leq j \leq 3})$ is a presentation of $\Cr$, the cellular extension $Q_4$  is empty and $P_4$ is an acyclic extension of $P_1^\ast [P_2](P_3)$.

\section{Polygraphs modulo}
\label{S:PolygraphsModulo}

We introduce the notion of polygraphs modulo and define their main rewriting properties.

\subsection{Polygraphs modulo}
\label{SS:PolygraphsModulo}

\subsubsection{Cellular extensions modulo}
\label{SSS:CellularExtensionModulo}
Let $E$ and $R$ be two $n$-polygraphs such that $E_{\leq n-2}=R_{\leq n-2}$ and $E_{n-1} \subseteq R_{n-1}$. Recall that $R_n^{\ast (1)}$ denotes the set of $R_n$-rewriting steps.
We define the cellular extension 
\[
\gamma^{\ER} : \ER \fl \Sph_{n-1}(R_{n-1}^\ast),
\]
where the set $\ER$ is defined by the following pullback in $\Set$:
\[ 
\xymatrix{ \tck{E_n} \times_{R_{n-1}^{\ast}} R_n^{\ast (1)} \ar [d] _-{\pi_1} \ar [r] ^-{\pi_2} & R_n^{\ast (1)} \ar [d] ^-{\st_{-}}  \\
\tck{E_n} \ar [r] _-{\st_{+}} & R_{n-1}^\ast }
\]
and the map $\gamma^{\ER}$ is defined by $\gamma^{\ER}(e,f) = ( \st_{-}(e) , \st_{+}(f) )$, for all $e$ in $\tck{E}_n$ and $f$ in $R_n^{\ast (1)}$.
Similarly, we define the cellular extension 
\[
\gamma^{\RE} : \RE \fl \Sph_{n-1}(R_{n-1}^\ast),
\]
where the set $\RE$ is defined by the following pullback in $\Set$:
\[ 
\xymatrix{ R_n^{\ast (1)} \times_{R_{n-1}^\ast} \tck{E_n} \ar [d] _-{\pi_1} \ar [r] ^-{\pi_2} & \tck{E_n} \ar [d] ^-{\st_{-}}  \\
R_n^{\ast (1)} \ar [r] _-{\st_{+}} & R_{n-1}^\ast }
\]
and the map $\gamma^{\RE}$ is defined by $\gamma^{\RE}(f,e) = ( \st_{-}(f) , \st_{+}(e) )$, for all $e$ in $\tck{E}_n$ and $f$ in $R_n^{\ast (1)}$.
Finally, we define the cellular extension
\[
\gamma^{\ERE} : \ERE \fl \Sph_{n-1}(R_{n-1}^\ast),
\]
where the set $\ERE$ is defined by the following composition of pullbacks in $\Set$:
\[
\xymatrix@C=4em@R=2.5em{
\tck{E_n} \times_{R_{n-1}^\ast} R_n^{\ast (1)} \times_{R_{n-1}^\ast} \tck{E_n} \ar [r] ^-{(\pi_2,\pi_3)} \ar [d] _-{(\pi_1,\pi_2)} & R_n^{\ast (1)} \times_{R_{n-1}^\ast} \tck{E_n} \ar [d] _-{\pi_1} \ar [r] ^-{\pi_2} & \tck{E_n} \ar [d] ^-{\st_{-}}  \\
\tck{E_n} \times_{R_{n-1}^{\ast}} R_n^{\ast (1)} \ar [d] _-{\pi_1} \ar [r] ^-{\pi_2} & R_n^{\ast (1)} \ar [r] _-{\st_{+}} \ar [d] ^-{\st_{-}} & R_{n-1}^\ast \\
\tck{E_n} \ar [r] _-{\st_{+}} & R_{n-1}^\ast & 
} 
\] 
and the map $\gamma^{\ERE}$ is defined by $\gamma^{\ERE}(e,f,e') = (\st_{-}(e),\st_{+}(e'))$. 

\subsubsection{Polygraphs modulo}
\label{SSS:PolygraphsModulo}
An \emph{$n$-polygraph modulo} is a data~$\Pr:=(R,E,S)$ made of
\begin{enumerate}[{\bf i)}]
\item an $n$-polygraph $R$, whose generating $n$-cells are called \emph{primary rules},
\item an $n$-polygraph $E$ such that $E_{\leq (n-2)} := R_{\leq (n-2)}$ and $E_{n-1} \subseteq R_{n-1}$, whose generating $n$-cells are called \emph{modulo rules},
\item a cellular extension $S$ of $R_{n-1}^\ast$ satisfying the following conditions:
\[
R_n
\subseteq
S 
\subseteq 
\ERE.
\]
\end{enumerate}
This means that $S$ contains all the generating $n$-cells of $R_n$ and that every generating $n$-cell in~$S$ can be written $(e,f,e')$ with $e,e'\in\tck{E}_n$ and $f$ in $R_n^{\ast (1)}$. The \emph{$(n-1)$-category presented by $\Pr$}, denoted by $\cl{\Pr}$, is the category presented by the $n$-polygraph $(R_{\leq n-1},E_n\cup R_n)$.

\subsubsection{} 
We will consider in the sequel the following categories with respect to $\Pr$:
\begin{enumerate}[{\bf i)}]
\item the free $n$-category $R_{n-1}^\ast [R_n, E_n \coprod E_n^{-1}] \slash \text{Inv}(E_n, E_n^{-1})$, denoted by $R^\ast(E)$.
\item the free $n$-category generated by $S$, denoted by $S^\ast$, 
\item the free $(n,n-1)$-category generated by $S$, denoted by $\tck{S}$.
\end{enumerate}

\subsubsection{Rewriting and normal forms}
Recall from \eqref{SSS:Reductions} that  the size of an $n$-cell $f$ in $S^\ast$ is the positive integer $\norm{f}_S$ corresponding to the number of elements of $S$ contained in $f$. Seen as a $S$-rewriting path, the size of $f$ corresponds to the length of the path, that is the minimal number of $n$-cells of $S$ needed to write $f$ as an $(n-1)$-composite of elements of $S$. By definition of $S$, we have $\norm{f}_S=\norm{f}_{R_n}$. Let $(u,v)\in\Sph_{n-1}(R_{n-1}^\ast)$, recall that a $S$-rewriting step from $u$ to $v$ is an $n$-cell $f$ in $S^\ast$ with source $u$ and target $v$ such that $\norm{f}_S=1$, and a $S$-reduction path is a sequence $(f_i)_{i\in I}$ of normal $S$-rewriting step such that, for every $i\in I$, $\st_+(f_i)=\st_-(f_{i+1})$. A \emph{$S$-normal form} of an~$(n-1)$-cell $u$ in $R_{n-1}^\ast$ is a $S$-irreducible $(n-1)$-cell $v$ such that $u$ $S$-reduces to~$v$.
We denote by $\irr(S)$ the set of $S$-irreducible $(n-1)$-cells of $R_{n-1}^\ast$, and by $\nf(S,u)$ the set of $S$-normal forms of $u$.

\subsubsection{Square extensions of a polygraph modulo}
A square extension of the pair of $n$-categories $(\tck{E},S^\ast)$ will be called a \emph{square extension} of $\Pr$.
A \emph{coherent extension} of $\Pr$ is an acyclic square extension of the pair of $(n,n-1)$-categories $(\tck{E},\tck{S})$.

\subsection{Termination modulo}
This subsection deals with the property of Noetherianity of polygraphs modulo. In particular, we give a method to prove the termination with respect to an order compatible modulo rules. We also recall the double induction principle introduced by Huet in \cite{Huet80} that we will use in many proofs in the sequel. Let $\Pr=(R,E,S)$ denote an $n$-polygraph modulo. 

\subsubsection{Termination}
The $n$-polygraph modulo $\Pr$ is \emph{terminating} if the $n$-polygraph $(R_{\leq n-1}, \ERE)$ is terminating. When $S \ne R$, the termination of $\Pr$ is equivalent to the termination of the $n$-polygraphs $(R_{\leq n-1}, \RE)$, $(R_{\leq n-1}, \ER)$, and $(R_{\leq n-1},S)$. 
An order relation $\prec$ on the set of $(n-1)$-cells of $R_{n-1}^\ast$ is \emph{compatible with $S$ modulo $E$} if it satisfies the following two conditions for $(n-1)$-cells $u,v$ in $R_{n-1}^\ast$:
\begin{enumerate}[{\bf i)}]
\item $v \prec u$ if there exists an $n$-cell $u \fl v$ in $S^\ast$,
\item if $v\prec u$,  then $v' \prec u'$ holds for all $(n-1)$-cells $u',v'$ in $R_{n-1}^\ast$ such that there exist $n$-cells $e:u \fl u'$ and $e': v \fl v'$ in $\tck{E}_n$. 
\end{enumerate}
A \emph{termination order for $\Pr$} is a well-founded order relation on $R_{n-1}^\ast$, compatible with $S$ modulo $E$.

In this work, many constructions will be based on the termination of the $n$-polygraph modulo $(R,E,\ERE)$, which can be proved by constructing a termination order for one of the $n$-polygraphs $(R_{\leq n-1},\ER)$, $(R_{\leq n-1},\RE)$ and~$(R_{\leq n-1},\ERE)$. It can be also proved by constructing a termination order for $\Pr$. Such an order can be constructed as an order $\prec$ on $R_{n-1}^\ast$, stable by context, satisfying $\partial_+(f) \prec \partial_f (f)$ for every $f$ in $R_n$, and stable by the $n$-cells of $E_n$. 

\subsubsection{Noetherian induction}
\label{SSS:InductionPrincipale}
If $\Pr$ is terminating, then every $(n-1)$-cell of $R_{n-1}^\ast$ has at least one $S$-normal form.
In that case, we can prove a property $\mathbb{P}$ on an $(n-1)$-cells $u$ of $R_{n-1}^\ast$ by \emph{Noetherian induction}. For that, we prove the property $\mathbb{P}$ on $S$-normal forms. Then, we assume that $\mathbb{P}$ holds for every $(n-1)$-cell
$v$ such that $u$ $S$-reduces to $v$, and we prove,
under those hypotheses, that the $(n-1)$-cell $u$ satisfies the
property $\mathbb{P}$.

Let us recall the double Noetherian induction principle introduced by Huet in \cite{Huet80}, and that we will use to prove properties of confluence modulo from local confluence modulo assumptions. 
We consider an auxiliary $n$-polygraph $\Saux$ as follows. 
For $0 \leq k \leq n-1$, we set
\[
\text{$\Saux_k := S_k \times S_k$ }, 
\] 
and $\Saux_n$ is the set of $n$-cells $(u,v) \fl (u',v')$, for all $(n-1)$-cells $u,u',v,v'$ in $R_{n-1}^\ast$ in any of the following situation:
\begin{enumerate}[{\bf i)}]
\itemsep0em
\item there exists an $n$-cell $u \fl u'$ in $S^\ast$ and $v=v'$,
\item there exists an $n$-cell $v \fl v'$ in $S^\ast$ and $u=u'$,
\item there exist $n$-cells $u \fl u'$ and $u \fl v'$ in $S^\ast$,
\item there exist $n$-cells $v \fl u'$ and $v \fl v'$ in $S^\ast$,
\item  there exist $n$-cells $e_1$, $e_2$, and $e_3$ in $\tck{E}$ as in the following diagram
\[ 
\xymatrix{ 
u \ar [r] ^-{e_1} 
& 
v 
\ar [r] ^-{e_2} 
& 
u' 
\ar [r] ^-{e_3} & v' 
} 
\]
such that $\norm{e_1}_E > \norm{e_3}_E$.
\end{enumerate}
Note that this definition implies that, if there exist  $n$-cells $u \fl u'$ and $v \fl v'$ in $S^\ast$, then there is an $n$-cell $(u,v) \fl (u',v')$ in $\Saux$ given by the following composite:
\[ 
(u,v) \fl (u',v) \fl (u',v') 
\]
Following \cite[Prop. 2.2]{Huet80}, if  $\Pr$ is terminating, then so is $\Saux$. 
In the sequel, we will apply this Noetherian induction on $\Saux$ with the following property: 
\begin{quote}
\emph{for all $n$-cells $f: u \fl u'$, $g: v \fl v'$ in $S^\ast$ and $e: u \fl v$ in $\tck{E}$, there exist $n$-cells $f': u' \fl u''$, $g': v' \fl w''$ in $S^\ast$ and $e': u'' \fl w''$ in $\tck{E}$, and a square $(n+1)$-cell $A$ in a given $(n-1)$-category enriched in groupoids, as depicted in the following diagram:
\[
\xymatrix @R=2em @C=2em{
u
  \ar[r] ^-{f}
  \ar[d] _-{e}
&
u' 
  \ar@{.>}[r] ^-{f'} 
& 
u''
  \ar@{.>}[d] ^-{e'}
\\
v
  \ar[r] _-{g}
&
v'
  \ar@{.>}[r] _-{g'} 
&
w''
\ar@2 "1,2"!<0pt,-8pt>;"2,2"!<0pt,8pt> ^-{A}
}
\]
}
\end{quote}
In Section \ref{S:CoherentConfluenceModulo}, we will formulate this property for a branching $(f,e,g)$ of $\Pr$ in terms of coherent confluence modulo.

\subsection{Confluence modulo}
\label{S:ConfluenceModulo}
This subsection deals with properties of confluence and local confluence of polygraphs modulo. We define the notion of branching for polygraphs modulo and we give a classification of the local branchings.
Let $\Pr=(R,E,S)$ denote an $n$-polygraph modulo.  

\subsubsection{Branchings}
A \emph{strict $S$-branching} is an pair~$(f,g)$, where $f,g$ are $n$-cells of $S^\ast$ such that $\st_{-,n-1}^h(f)=\st_{-,n-1}^h(g)$, and depicted by 
\begin{eqn}{equation}
\label{E:branching}
\raisebox{0.5cm}{
\xymatrix @R=2em @C=2em{
u 
  \ar[r] ^-{f} 
  \ar[d] _-{\rotatebox{90}{=}}
& 
u'
\\
u
  \ar[r] _-{g} 
&
v'
}}
\end{eqn}
Such a strict branching is also denoted by $(f,g) : u \dfl (u',v')$, and the $(n-1)$-cell $u$ is called the \emph{source} of the strict branching.
A \emph{$S$-branching} is a triple~$(f,e,g)$ where $f,g$ are $n$-cells of~$S^*$ with $f$ non trivial and $e$ is an $n$-cell of $\tck{E}$. Such a branching is depicted by
\begin{eqn}{equation}
\label{E:branchingModulo}
\raisebox{0.55cm}{
\xymatrix @R=2em @C=2em{
u 
  \ar[r] ^-{f} 
  \ar[d] _-{e}
& 
u'
\\
v
  \ar[r] _-{g} 
&
v'
}}
\qquad\qquad
\big(\text{resp.}
\quad
\raisebox{0.55cm}{
\xymatrix @R=2em @C=2em{
u 
  \ar[r] ^-{f} 
  \ar[d] _-{e}
& 
u'
\\
v
}}
\quad
\big)
\end{eqn}
when $g$ is non trivial (resp. trivial) and denoted by 
$(f,e,g) : (u,v) \dfl (u',v')$ (resp. $(f,e) : u \dfl (u',v)$). 
The pair of $(n-1)$-cells $(u,v)$ (resp. $(u,u)$) is called the \emph{source} of this branching. Note that any strict branching $(f,g)$ is a branching $(f,e,g)$ where $e=i_1^v(\st_{-,(n-1)}^h(f)) = i_1^v(\st_{-,(n-1)}^h(g))$.

\subsubsection{Confluences and confluences modulo}
\label{SSS:Confluences}
A \emph{strict $S$-confluence} is a pair $(f',g')$ of $n$-cells  of $S^\ast$ such that $\st_{+,(n-1)}^h(f')=\st_{+,(n-1)}^h(g')$, depicted by
\[
\xymatrix @R=2em @C=2em{
u' 
  \ar[r] ^-{f'} 
& 
w
  \ar[d] ^-{\rotatebox{90}{=}}
\\
v'
  \ar[r] _-{g'} 
&
w
}
\]
and denoted by $(f',g'):(u',v') \dfl w$.
A \emph{$S$-confluence} is a triple $(f',e',g')$, where $f',g'$ are $n$-cells of $S^\ast$ and $e'$ is an $n$-cell of $\tck{E}$ such that $\st_{+,(n-1)}^h(f')=\st_{-,(n-1)}^v(e')$ and $\st_{+,(n-1)}^h(g')=\st_{+,(n-1)}^v(e')$, depicted by
\[
\xymatrix @R=2em @C=2em{
u' 
  \ar[r] ^-{f'} 
& 
w
  \ar[d] ^-{e'}
\\
v'
  \ar[r] _-{g'} 
&
w'
}
\]
and also denoted by $(f',e',g'):(u',v') \dfl (w,w')$.
The strict $S$-branching~\eqref{E:branching} is \emph{strictly confluent}
(resp. \emph{confluent}) if there exists a strict $S$-confluence $(f',g')$ (resp. $S$-confluence $(f',e',g')$) as follows:
\[
\raisebox{0.55cm}{
\xymatrix @R=2em @C=2em{
u
  \ar[r] ^-{f}
  \ar[d] _-{\rotatebox{90}{=}}
&
u' 
  \ar@{.>}[r] ^-{f'} 
& 
w
  \ar[d] ^-{\rotatebox{90}{=}}
\\
u
  \ar[r] _-{g}
&
v'
  \ar@{.>}[r] _-{g'} 
&
w'
}}
\qquad\qquad
\big(\text{resp.}
\quad
\raisebox{0.55cm}{
\xymatrix @R=2em @C=2em{
u
  \ar[r] ^-{f}
  \ar[d] _-{\rotatebox{90}{=}}
&
u' 
  \ar@{.>}[r] ^-{f'} 
& 
w
  \ar@{.>}[d] ^-{e'}
\\
u
  \ar[r] _-{g}
&
v'
  \ar@{.>}[r] _-{g'} 
&
w'
}}
\quad
\big).
\]
The $S$-branching~\eqref{E:branchingModulo} is \emph{confluent} if there exists a $S$-confluence $(f',e',g')$ as follows:
\[
\raisebox{0.55cm}{
\xymatrix @R=2em @C=2em{
u
  \ar[r] ^-{f}
  \ar[d] _-{e}
&
u' 
  \ar@{.>}[r] ^-{f'} 
& 
w
  \ar@{.>}[d] ^-{e'}
\\
v
  \ar[r] _-{g}
&
v'
  \ar@{.>}[r] _-{g'} 
&
w'
}}.
\]
		
\subsubsection{Local branchings}
\label{SSS:LocalBranchings}
A strict $S$-branching $(f,g)$ is \emph{local} if $f,g\in S^{*(1)}$. 
A $S$-branching $(f,e,g)$ is \emph{local} if $f\in S^{*(1)}$, and the $n$-cells $g$ of $S^\ast$ and $e$ of~$\tck{E}$ satisfy $\norm{g}_S + \norm{e}_E = 1$.
Local $S$-branchings belong to one of the following families:
\begin{enumerate}[{\bf i)}]
\item \emph{local aspherical} strict $S$-branchings of the form:
\[
\xymatrix @R=2em @C=2em{
u 
  \ar[r] ^-{f} 
  \ar[d] _-{\rotatebox{90}{=}}
& 
v
  \ar[d] ^-{\rotatebox{90}{=}}
\\
u
  \ar[r] _-{f} 
&
v
}
\]
where $f$ is an $n$-cell of $S^{\ast(1)}$;
\item \emph{local Peiffer} strict $S$-branchings of the form:
\[
\xymatrix @R=2em @C=2em{
u\star_i v 
  \ar[r] ^-{f\star_i v} 
  \ar[d] _-{\rotatebox{90}{=}}
& 
u'\star_i v
\\
u\star_i v
  \ar[r] _-{u\star_i g} 
&
u\star_i v'
}
\]
where $0\leq i \leq n-2$, $f$ and $g$ are $n$-cells of $S^{\ast(1)}$,
\item \emph{local Peiffer} $S$-branchings of the forms:
\begin{eqn}{equation}
\label{E:LocalPeifferModulo}
\xymatrix @R=2em @C=2em{
u\star_i v 
  \ar[r] ^-{f\star_i v} 
  \ar[d] _-{u\star_i e}
& 
u'\star_i v
\\
u\star_i v' &
}
\qquad\qquad
\xymatrix @R=2em @C=2em{
w\star_i u 
  \ar[r] ^-{w\star_i f} 
  \ar[d] _-{e'\star_i u}
& 
w\star_i u'
\\
w'\star_i u &
}
\end{eqn}
where $0\leq i \leq n-2$, where $f$ is an $n$-cell of $S^{\ast(1)}$ and $e,e'$ are $n$-cells of $E^{\top (1)}$;
\item \emph{overlapping} strict $S$-branchings are the remaining local strict branchings:
\[
\xymatrix @R=2em @C=2em{
u 
  \ar[r] ^-{f} 
  \ar[d] _-{\rotatebox{90}{=}}
& 
v
\\
u
  \ar[r] _-{g} 
&
v'
}
\]
where $f$ and $g$ are $n$-cells of $S^{\ast(1)}$,
\item \emph{overlapping $S$-branchings} are the remaining local branchings:
\begin{eqn}{equation}
\label{E:OverlappingBranchingModulo}
\xymatrix @R=2em @C=2em{
u 
  \ar[r] ^-{f} 
  \ar[d] _-{e}
& 
v
\\
v' & }
\end{eqn}
where $f$ is an $n$-cell of $S^{\ast(1)}$ and $e$ is an $n$-cell of $E^{\top (1)}$.
\end{enumerate}

Let $(f,g)$ (resp. $(f,e,g)$) be a strict $S$-branching (resp. $S$-branching) with source $u$ (resp. $(u,v)$) and a whisker $C$ of $R_{n-1}^\ast$ composable with $u$ and $v$. Then, the pair $(C[f],C[g])$ (resp. triple $(C[f],C[e],C[g])$) is a strict $S$-branching (resp. $S$-branching). If the $S$-branching $(f,e,g)$ is local, then so is $(C[f],C[e],C[g])$.
We denote by $\sqsubseteq$ the order relation on $S$-branchings defined by $(f,e,g) \sqsubseteq (f',e',g')$ if there exists a whisker~$C$ of~$R_{n-1}^\ast$ such that $(C[f],C[e],C[g]) = (f',e',g')$. A strict $S$-branching (resp. $S$-branching) is \emph{minimal} if it is minimal for the order relation $\sqsubseteq$. A strict $S$-branching (resp. $S$-branching) is \emph{critical} if it is a minimal overlapping strict $S$-branching (resp. $S$-branching).

\subsubsection{Confluence properties of polygraphs modulo}
The $n$-polygraph modulo~$\Pr$ is called
\begin{enumerate}[{\bf i)}]
\item \emph{locally confluent} if each of its local $S$-branchings is confluent,
\item \emph{confluent} if each of its $S$-branchings is confluent, 
\item \emph{convergent} if it is both terminating and confluent,
\item \emph{diconvergent} when its is convergent and the $n$-polygraph $E$ is convergent,
\item \emph{JK confluent} if every strict $S$-branching is confluent,
\item \emph{JK coherent} if every $S$-branching of the form $(f,e) : u \dfl (u',v)$ is confluent:
\[
\xymatrix @R=2em @C=2em{
u
  \ar[r] ^-{f} 
  \ar[d] _-{e}
& 
v \ar@1@{.>} [r] ^-{f'} & v' \ar@{.>}@1 [d] ^-{e'} \\
u'
  \ar@{.>} [rr] _-{g'} 
&  & w  }
\]
in such a way that $g'$ is a non-trivial $n$-cell in $S^\ast$.
\end{enumerate}

Note that when $\Pr$ is confluent, every $(n-1)$-cell of~$R_{n-1}^\ast$ has at most one $S$-normal form. Under the confluence modulo hypothesis, an~$(n-1)$-cell may admit several $S$-normal forms, which are all equivalent modulo~$E$.
The notions of JK confluence and JK coherence were introduced by Jouannaud and Kirchner in \cite{JouannaudKirchner84}. Following \cite{JouannaudKirchner84}, there exists a local version of JK-confluence  $E$ and JK coherence, given
by properties {\bf a)} and {\bf b)} of Proposition~\ref{P:CoherentCriticalBranchingProposition}, and we will prove in the next section that all these notions are equivalent.

\subsection{Completion procedures for polygraphs modulo}
\label{S:CompletionProcedure}

In this subsection, we define a procedure that completes a non confluent $n$-polygraph modulo $(R,E,\ER)$ into a confluent $n$-polygraph modulo $(\check{R},E,{}_E \check{R})$. 

\subsubsection{Completion of $\ER$}
\label{SSS:RemarquesERconfluentModuloE}
The property of JK coherence is trivially satisfied for the $n$-polygraph modulo $(R,E,\ER$). Indeed, every $\ER$-branching of the form $(f,e)$ is trivially confluent as follows:  
\begin{eqn}{equation}
\label{E:JKCoherenceTrivialForER}
\xymatrix @R=2em @C=2em{
u 
  \ar[r] ^-{f} 
  \ar@{->} [d] _-{e}
& 
v \ar[d] ^-{\rotatebox{90}{=}}
\\
v'
  \ar[r] _-{e^- \cdot f} 
&
v
} 
\end{eqn}
where $e^- \cdot f$ is a $\ER$-rewriting step.
Following Theorem \ref{T:CoherentCriticalBranchingTheorem}, we define a completion procedure to reach confluence of the $n$-polygraph modulo $(R,E,\ER)$, similar to the Knuth-Bendix completion. From~\eqref{E:JKCoherenceTrivialForER} and Theorem \ref{T:CoherentCriticalBranchingTheorem}, when $(R,E,\ER)$ is terminating, it is confluent if, and only if, all its critical branchings $(f,g)$ with $f$ in $(\ER)^{\ast (1)}$ and $g$ in $\Ro$ are confluent, as depicted by:
\[
\xymatrix @R=2em @C=5em {
u
  \ar[r] ^-{f \in (\ER)^{\ast (1)}} 
  \ar[d] _-{\rotatebox{90}{=}}
& 
v \ar@1@{.>} [r] ^-{f' \in (\ER)^{\ast}} & v' \ar@{.>}@1 [d] ^-{e'} \\
u
  \ar[r] _-{g \in \Ro} 
& w \ar@1@{.>} [r] _-{g' \in (\ER)^{\ast}} & w'}
\]
We denote by $\text{CP}(\ER,R)$ the set of such critical branchings.
	
\subsubsection{Completion procedure for $\ER$}
\label{SSS:CompletionProcedure}
Consider an $n$-polygraph modulo $(R,E,\ER)$ with a termination order $\prec$. The following procedure computes a completion $\check{R}$ of the $n$-polygraph $R$ such that the $n$-polygraph modulo $(\check{R},E,\er{E}{\check{R}})$ is confluent. We denote by $\hat{u}^{\ER}$ a $\ER$-normal form of an element $u$ in $R_{n-1}^\ast$. For all $(n-1)$-cells~$u,v$ in $R_{n-1}^\ast$, we denote $u \approx_E v$ if there exists an $n$-cell $e: u \fl v$ in $\tck{E}$.  

\medskip

\begin{algorithm}[H]
		\SetAlgoLined
		\KwIn{
			
			\begin{tabular}{l}
				$R$ and $E$ two $n$-polygraphs such that $R_{\leq n-1} = E_{\leq n-1}$.\\
				$\prec$ a termination order for $(R,E,\ER)$ that is total on the set of $\ER$-irreducible $(n-1)$-cells.
			\end{tabular}
		}
		
		\BlankLine
		
		\Begin{
			
			\BlankLine

			$\Cr \leftarrow \text{CP}(\ER,R)$; \\
			\While{ $\Cr \ne \emptyset$} {
				Pick a branching $c=(f: u \dfl v,g: u \dfl w)$ in $\mathcal{C}$, with $f$ in $\ER^\ast$ and $g$ in $R^\ast$; \\
				Reduce $v$ to a $\ER$-normal form $\hat{v}^{\ER}$; \\
				Reduce $w$ to a $\ER$-normal form $\hat{w}^{\ER}$; \\
				$\Cr \leftarrow \Cr \backslash \{ c \} $ ;\\
				\If{ $\hat{v}^{\ER} \quad \cancel{\approx_E} \quad \hat{w}^{\ER}$ }
				{ \If{ $\hat{w}^{\ER} \prec \hat{v}^{\ER}$}
					{ $R \leftarrow R \cup \{\alpha : \hat{v}^{\ER}\dfl \hat{w}^{\ER} \}$;} 
					\If{ $\hat{v}^{\ER} \prec \hat{w}^{\ER}$}
					{ $R \leftarrow R \cup \{\alpha :  \hat{w}^{\ER} \dfl \hat{v}^{\ER} \}$;}}
				$\Cr \leftarrow \Cr \cup \{ \text{$(\ER,R)$-critical branchings created by $\alpha$} \}$;
			}}
			\BlankLine
\end{algorithm}

\medskip
		
This procedure may not be terminating. However, it does not fail because the order $\prec$ is total on the set of $\ER$-irreducible $(n-1)$-cells. 

\begin{proposition} 
When it terminates, procedure~\eqref{SSS:CompletionProcedure} returns a confluent~$n$-polygraph modulo.
\end{proposition}
\begin{proof}
The proof of soundness of the completion procedure for $(R,E,\ER)$ is a consequence of the inference system given by Bachmair and Dershowitz in \cite{BachmairDershowitz89} in order to get a set of rules $\check{R}$ such that $(\check{R},E,{}_E \check{R})$ is confluent. Given a termination order $\prec$ on $(R,E,\ER)$, their inference system is given by the following six elementary rules:
\begin{enumerate}[{\bf 1)}]
\item Orienting an equation:
$$ ( A \cup \{ s  = t \} , R)  \qquad \rightsquigarrow \qquad \textrm{$(A, R \cup \{ s \fl t \}$) \quad if\; $s \succ t$.} $$ 
\item Adding an equational consequence:
		$$ (A,R)  \qquad \rightsquigarrow \qquad \textrm{$(A \cup \{ s = t \} , R)$ if $s \overset{*}{\longleftarrow_{R \cup E}} u \overset{*}{\longrightarrow_{R \cup E}} t$}. $$ 
\item Simplifying an equation:
		$$ (A \cup \{ s = t \} , R) \qquad \rightsquigarrow \qquad \textrm{$(A \cup \{ u=t \} , R)$ \quad if\; $s \overset{\ER}{\fl} u$}. $$
\item Deleting an equation:
		$$ (A \cup \{ s = t \} , R) \qquad \rightsquigarrow \qquad \textrm{$(A , R)$ \quad if\; $s \approx_E t$}. $$
\item Simplifying the right-hand side of a rule:
		$$ (A, R \cup \{ s \fl t \}) \qquad \rightsquigarrow \qquad \textrm{$(A, R \cup \{ s \fl u \}) $\quad if\;$t \overset{\ER}{\fl} u$}. $$
\item Simplifying the left-hand side of a rule:
		\begin{align*} (A, R \cup \{ s \fl t \}) \qquad & \rightsquigarrow \qquad \textrm{$(A \cup \{ u=t \}, R) $ \quad if\; $s \overset{\ER}{\fl} u$}. \\ 
		\end{align*} 
\end{enumerate}
The soundness of Procedure~(\ref{SSS:CompletionProcedure}) is a consequence of the following arguments:
\begin{enumerate}[{\bf i)}]
			\item For every critical branching $(f: u \fl v,g: u \fl w)$ in $\text{CP}(\ER,R)$, we can add an equation $v = w$ using the elementary rule {\bf 2)}, and simplify it to $\hat{v}^{\ER} = \hat{w}^{\ER}$ using the elementary rule {\bf 3)}.
			\item If $\hat{v}^{\ER} \approx_E \hat{w}^{\ER}$, we can delete the equation using the elementary rule {\bf 4)}. 
			\item Otherwise, we can always orient it using the elementary rule {\bf 1)}.
\end{enumerate}
Thus, each step of the procedure comes from one of these inference rules. Following \cite{BachmairDershowitz89}, it returns a confluent $n$-polygraph modulo $(\check{R},E,\er{E}{\check{R}})$.
\end{proof} 

\subsubsection{Completion procedure for $\ERE$}
\label{SSS:CompletionProcedureERE}
By definition, the polygraph modulo $(R,E,\ER)$ is confluent if, and only if, the polygraph modulo $(R,E,\ERE)$ is confluent. The completion procedure~\eqref{SSS:CompletionProcedure} extends to polygraphs modulo $(R,E,\ERE)$. In that case, the critical branchings of the form $(f,e)$  with $f$ in $\ERE^{\ast (1)}$ and $e$ in $\Eo$ are still trivially confluent. The $\ERE$-critical branchings of the form $(f,g)$, with $f$ in $(\ERE)^{\ast (1)}$ and $g$ in $R^{\ast (1)}$ can be written as a pair $(f' \cdot e, g)$, where $(f',g)$ is a critical branching in $\text{CP}(\ER,R)$ and $e$ is an $n$-cell in $\tck{E}$.  
The completion procedure~\eqref{SSS:CompletionProcedure} for $\ER$ can therefore be adapted to the polygraph modulo $(R,E,\ERE)$. In that case, the procedure differs from~\eqref{SSS:CompletionProcedure} by the fact that when adding a rule $\alpha: u \dfl v$ in~$R$, we can choose as target of $\alpha$ any element of the $E$-equivalence class of $v$. We prove in the same way that if the procedure terminates, it returns an $n$-polygraph modulo $(\check{R},E,{}_E \check{R}_E)$ that is confluent.

\section{Coherent confluence modulo}
\label{S:CoherentConfluenceModulo}

This section deals with the property of coherent confluence for an $n$-polygraph modulo defined by the adjunction of a square cell for each confluence diagram.
We prove a coherent version of Newman's lemma \cite{Newman42} for polygraphs modulo relating coherent confluence to coherent local confluence.  We prove also a coherent version of the critical branching lemma reducing local coherent confluence to the coherent confluence of a reduced set of critical branchings. 
Let $\Pr=(R,E,S)$ denote an $n$-polygraph modulo.

\subsection{Coherent Newman's lemma modulo}
\label{SS:CoherentNewmanModulo}

\subsubsection{Action on branchings}
\label{SSS:ActionOnSquare}
Let $\Gamma$ be a square extension of $\Pr$. Every $n$-cell $f$ in~$S^\ast$ can be written $f = e_1 \star_{n-1} f_1 \star_{n-1} e_2 \star_{n-1} f_2$ in the free $n$-category $(R\cup E)^\ast$, with $f_1$ in $\Ro$, $f_2$ in~$S^\ast$ such that $\norm{f_2}_S = \norm{f}_S -1$, and $e_1,e_2$ are $n$-cells in $\tck{E}$ possibly identities.
Thus, a $S$-branching $(f,e,g)$ with a $S$-confluence $(f',e',g')$ may correspond to different squares in $\Sq (\tck{E} , S^\ast)$. For example, if the $n$-cell $g$ of $S^\ast$ can be decomposed as $e_1 \star_{n-1} g_1 \star_{n-1} e_2$, the following squares in $\Sq (\tck{E} , S^\ast)$ are different $S$-branchings, 
but we would like them to be equivalent because they provide the same relation among relations when quotienting by $E$:
\[
\xymatrix @R=2em @C=2em {
u
   \ar[r] ^-{f}
   \ar[d] _-{e}
&
v \ar@1@{.>} [r] ^-{f'} & v' \ar@1@{.>} [d] ^-{e'} \\
u
   \ar[r] _-{g}
& w \ar@1@{.>} [r] _-{g'} & w'
} \qquad \raisebox{-6mm}{$\text{and}$} \qquad \xymatrix @R=2em @C=2em {
u
   \ar[r] ^-{f}
   \ar[d] _-{e \star_{n-1} e_1}
&
v \ar@1@{.>} [r] ^-{f'} & v' \ar@1@{.>} [d] ^-{e'} \\
u_1
   \ar[r] _-{g_1 e_2}
& w \ar@1@{.>} [r] _-{g'} & w'
}
\]
These two squares are not equal in the free $n$-category enriched in double groupoids generated by the double $(n+1,n-1)$-polygraph $(E,S,\Gamma\cup \peiffer{\tck{E}}{S^\ast})$. However, in the computation of a coherent extension of $\Pr$, we do not want to add a square cell for each of these confluence diagrams, since they would give the same relation among relations in the quotient modulo the axioms.
We then define a \emph{biaction} of $\tck{E}$ on $\Sq (\tck{E} , S^\ast)$. For all $n$-cells~$e_1,e_2$ in $\tck{E}$ and square $(n+1)$-cell 
\[ 
\xymatrix @R=2em @C=2em{
u 
  \ar[r] ^-{f} ^-{}="1"
  \ar[d] _-{e}
& 
u' \ar[d] ^-{e'}
\\
u
  \ar[r] _-{g} _-{}="2" 
&
v'
\ar@2  "1"!<0pt,-10pt>;"2"!<0pt,10pt> ^-{A} 
}
\]
in $\Sq (\tck{E} , S^\ast)$ satisfying the following conditions
\begin{enumerate}[{\bf i)}]
\item $ \st_{+,n-1}(e_1) = \st_{-,n-1}^h \st_{-,n}^v (A)$ and $ \st_{-,n-1}(e_2) = \st_{+,n-1}^h \st_{-,n}^v (A)$,
\item $ e_1 \st_{-,n}^h (A) \in S $ and $ e_2^- \st_{+,n}^h (A) \in S $,
\end{enumerate}
we define the square $(n+1)$-cell ${}_{e_2}^{e_1} A$ as follows:
\[ 
\xymatrix @R=2em @C=2em{
u_1  
  \ar[r] ^-{e_1 f} ^-{}="1"
  \ar[d] _-{e_1 e e_2}
& 
u' \ar[d] ^-{e'}
\\
u_2
  \ar[r] _-{e_2^- g} _-{}="2" 
&
v'
\ar@2  "1"!<-7pt,-10pt>;"2"!<-7pt,10pt> ^-{{}_{e_2}^{e_1} A}
}
\]
where $u_1 = \st_{-,n-1}(e_1)$ and $u_2 = \st_{+,n-1}(e_2)$. For a square extension $\Gamma$ of $\Pr$, we denote by $E \rtimes \Gamma$ the set containing all elements ${}_{e_2}^{e_1} A$, for $A$ in $\Gamma$ and $e_1,e_2$ $n$-cells in $\tck{E}$, whenever it is well defined.
For all $n$-cells $e_1,e_2$ in $\tck{E}$ and square $A,A'$ in $\Gamma$, the following equalities hold whenever both sides are defined:
\begin{enumerate}[{\bf i)}]
\item $ {}^{e'_1}_{e'_2}  ( {}^{e_1}_{e_2} A) = {}^{e'_1 e_1}_{e_2 e'_2} A $,
\item $ {}^{e_1}_{e_2} (A \cdg^v A') = ({}_{e_2}^{e_1} A) \cdg^v A'$,
\item $ {}^{e_1}_{e_2} (A \cdg^h A') = ({}^{e_1}_{1} A) \cdg^h ({}^{1}_{e_2} A')$.
\end{enumerate}

\subsubsection{Coherent confluence modulo}
Let us denote by
\[
\sqconf{\Gamma} \: := \:  \acy{\Gamma}
\]
the free $(n-1)$-category enriched in double categories, whose vertical $n$-cells are invertible, generated by the double $(n+1)$-polygraph $(E,S, \sqext)$ in $\dbpol_{n+1}^v$.
The $S$-branching~\eqref{E:branchingModulo} is \emph{$\Gamma$-confluent} if there exist $n$-cells~$f',g'$ in~$S^*$, $e'$ in~$\tck{E}$ and an~$(n+1)$-cell $A$ in $\sqconf{\Gamma}$ as in the following diagram:
\[
\raisebox{0.55cm}{
\xymatrix @R=2em @C=2em{
u
  \ar[r] ^-{f}
  \ar[d] _-{e}
&
u' 
  \ar@{.>}[r] ^-{f'} 
  \ar@2 []!<0pt,-8pt>; [d]!<0pt,+8pt> ^-{A}
& 
w
  \ar@{.>}[d] ^-{e'}
\\
v
  \ar[r] _-{g}
&
v'
  \ar@{.>}[r] _-{g'} 
&
w'
}}.
\]

We say that the $n$-polygraph modulo $\Pr$ is 
\begin{enumerate}[{\bf i)}]
\item \emph{$\Gamma$-confluent} (resp. \emph{locally $\Gamma$}-confluent) if every $S$-branching (resp. local $S$-branching) is $\Gamma$-confluent, 
\item \emph{$\Gamma$-convergent} if it is terminating and $\Gamma$-confluent,
\item \emph{$\Gamma$-diconvergent} if it is $\Gamma$-convergent and the $n$-polygraph $E$ is convergent.
\end{enumerate}
When $\Gamma=\Sq(\tck{E},S^\ast)$ (resp. $\Gamma=\Sph(S^\ast)$), the property of $\Gamma$-confluence corresponds to the property of confluence (resp. strict confluence) defined in \eqref{S:ConfluenceModulo}.

In the sequel, the proofs of confluence results will use Huet's double Noetherian induction principle on the $n$-polygraph $\Saux$ defined in \eqref{SSS:InductionPrincipale} and the property $\mathbb{P}$ on $R_{n-1}^\ast \times R_{n-1}^\ast$ defined by
\begin{eqn}{equation}
\label{E:PropertyDoubleInduction}
\mathbb{P}(u,v): \text{\emph{every $S$-branching with source $(u,v)$ is $\Gamma$-confluent},} 
\end{eqn}
for all~$u,v$ in $R_{n-1}^\ast$.

\begin{proposition}[Coherent half Newman's modulo lemma]
\label{P:CoherentHalfNewmanModulo}
Let $\Pr=(R,E,S)$ be a terminating $n$-polygraph modulo, and $\Gamma$ be a square extension of $\Pr$. If $\Pr$ is locally $\Gamma$-confluent, then the following two conditions hold
\begin{enumerate}[{\bf i)}]
\item every $S$-branching of the form $(f,e)$, with $f$ in $\So$ and $e$ in $\tck{E}$, is $\Gamma$-confluent,
\item every $S$-branching of the form $(f,e)$, with $f$ in $S^{\ast}$ and $e$ in $\Eo$, is $\Gamma$-confluent.
\end{enumerate}
\end{proposition}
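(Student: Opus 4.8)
The plan is to establish both assertions simultaneously, using Huet's double Noetherian induction on the auxiliary polygraph $\Saux$ of~\ref{SSS:InductionPrincipale}, which is well-founded because $\ERE$ is terminating. The base engine is the hypothesis of local $\Gamma$-confluence modulo $E$: it resolves exactly the one-step-versus-one-step branchings modulo, producing a coherence cell in $\sqconf{\Gamma}$. The strategy in each inductive step is to peel off a single generating cell, resolve the resulting local branching by the hypothesis, and then resolve the remaining residual branching by the induction hypothesis, which supplies $\Gamma$-confluence modulo $E$ for every branching whose source pair is a strict $\Saux$-predecessor of the current one. The final coherence cell is assembled by pasting squares along $\cdg^v$, $\cdg^h$ and $\star_{n-1}$, using the biaction $E \rtimes \Gamma$ and the Peiffer squares $\peiffer{\tck{E}}{S^\ast}$ to absorb the $\tck{E}$-cells produced on the boundaries, so that the result lives in $\sqconf{\Gamma}=\acy{\Gamma}$.

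I would establish~{\bf ii)} first, since~{\bf i)} reduces to it, arguing by induction on $\ell(f)$. The cases $\ell(f)=0$ and $\ell(f)=1$ are immediate: the former is filled by an identity square, the latter is a local branching modulo $E$ and hence $\Gamma$-confluent by hypothesis. For $\ell(f)\geq 2$, decompose $f = f_1 \star_{n-1} f_2$ with $f_1$ in $\So$ a single rewriting step $u \fl u_1$ and $\ell(f_2)=\ell(f)-1$. Applying local $\Gamma$-confluence to the local branching $(f_1,e)$ yields cells $h$ and $k$ in $S^\ast$ and $\epsilon$ in $\tck{E}$ together with a square in $\sqconf{\Gamma}$, after which there remains the residual branching between $f_2$ and $h$, both issued from $u_1$. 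Since $f_1$ is a rewriting step we have $u_1 \prec u$, so the pair $(u_1,u_1)$ is a strict $\Saux$-predecessor of $(u,v)$ (via the clause of~\ref{SSS:InductionPrincipale} for two reductions out of the same source), and the induction hypothesis resolves that residual branching. Pasting the two resulting squares and transporting the leftover $\tck{E}$-cells through the biaction closes the diagram.

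Assertion~{\bf i)} then follows by induction on $\ell(e)$. The cases $\ell(e)\leq 1$ are handled by an identity square and by local confluence. For $\ell(e)\geq 2$, write $e = e_2 \star_{n-1} \epsilon$ with $\epsilon$ a single cell of $\Eo$ and $\ell(e_2)=\ell(e)-1$, and apply the induction hypothesis of~{\bf i)} to the branching $(f,e_2)$. This produces a square in $\sqconf{\Gamma}$ whose lower-left corner carries a reduction $g_2$ in $S^\ast$ issued from the source of $\epsilon$. The branching $(g_2,\epsilon)$ has $g_2$ in $S^\ast$ and $\epsilon$ in $\Eo$, with trivial remaining leg, so it is exactly an instance of~{\bf ii)} and is $\Gamma$-confluent modulo $E$; pasting this resolution onto the previous square closes the diagram.

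The main obstacle is the residual branching in the inductive step of~{\bf ii)}: it is a branching between two nontrivial reductions of $S^\ast$ sharing the source $u_1$, a shape covered by neither assertion directly. Its resolution rests entirely on termination, namely on $u_1 \prec u$ making $(u_1,u_1)$ a strict $\Saux$-predecessor, so that the induction hypothesis applies; this is precisely where the coherent analogue of Newman's argument enters, and it is why the double induction on $\Saux$ (rather than a naive induction on $\ell(f)$ or $\ell(e)$ alone) is indispensable. The second delicate, but purely structural, point is the bookkeeping of the $\tck{E}$-cells: one must verify that every $\tck{E}$-cell created on a boundary can be reabsorbed through the relations satisfied by the biaction $E \rtimes \Gamma$ and through the Peiffer squares $\peiffer{\tck{E}}{S^\ast}$, so that the composite $(n+1)$-cell genuinely lies in $\sqconf{\Gamma}$ and not merely in some larger enriched double category.
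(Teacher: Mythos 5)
Your skeleton agrees with the paper's (Huet's double Noetherian induction on $\Saux$, local $\Gamma$-confluence for the one-step base case, the induction hypothesis for the residual branching, termination of $\ERE$ as the ground), but the step that actually closes the diagram is wrong as stated. In the inductive step of \textbf{ii)}, the square coming from the local branching $(f_1,e)$ has top edge $f_1\star_{n-1}h$ and right edge an $\tck{E}$-cell $\epsilon$ landing at the target of $k$, while the square resolving the residual branching $(f_2,h)$ has bottom edge $h\star_{n-1}h'$; these two cells do not paste, because the interface edges are $f_1\star_{n-1}h$ and $f_1\star_{n-1}h\star_{n-1}h'$. To stack them you must first extend the first square along $h'$, i.e.\ resolve the new branching $(h',\epsilon)$ between a leftover $S^\ast$-cell and a leftover $\tck{E}$-cell sharing the source $t(h)$ --- and this is \emph{not} something the biaction $E\rtimes\Gamma$ or the Peiffer squares $\peiffer{\tck{E}}{S^\ast}$ can do: those only commute squares past $E$-cells acting on their outer boundaries or at independent positions; they do not produce confluences for genuine interactions. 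Each such resolution appends further $S^\ast$-cells and $\tck{E}$-cells to the interfaces, creating yet another branching to resolve, so the diagram is closed only by a cascade of repeated applications of the induction hypothesis. The paper makes this cascade explicit and closes it with a separate argument: the process must stop after finitely many steps, since otherwise it would produce an infinite rewriting sequence with respect to $S$, contradicting termination of $\ERE$. Your proposal omits both the cascade and this termination argument, and the same omission occurs at the end of your proof of \textbf{i)}, where the square for $(f,e_2)$ (bottom edge $g_2$) and the square for $(g_2,\epsilon)$ (top edge $g_2\star_{n-1}g_2'$) likewise fail to compose without further resolutions.

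A second point to check is your reduction of \textbf{i)} to \textbf{ii)}. You peel the \emph{last} $E$-step and invoke \textbf{ii)} at the source $v_2=t(e_2)$, which is reached from $u$ only through $\tck{E}$-cells; but the inductive step of your \textbf{ii)} consumes the full property $\mathcal{P}$ at an $S$-reduct of its own source, and an $S$-reduct of $v_2$ is not an $\Saux$-successor of $(u,u)$ under the clauses of~\ref{SSS:InductionPrincipale} (it is reached by $E^\ast$ followed by $S$, not by $S^\ast$, and it need not lie in the $E$-class of $u$). So the mutual recursion between \textbf{i)}, \textbf{ii)} and $\mathcal{P}$ is not visibly grounded by the Noetherian induction as you have set it up. The paper instead peels the \emph{first} $E$-step: the local confluence is applied to the one-step branching $(f,e_1)$, the inner induction on $\ell(e)$ is applied at the source $t(e_1)$ without changing the outer induction variable, and the double induction is only invoked after an $S$-step has been performed, which keeps the recursive calls within reach of $\Saux$.
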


\begin{proof}
We prove condition {\bf i)}, the proof of condition {\bf ii)} is similar. Suppose that $\Pr$ is locally $\Gamma$-confluent, we proceed by double induction.

We denote by $u$ the source of the branching $(f,e)$. If $u$ is $S$-irreducible, then $f$ is an identity $n$-cell, and the branching is trivially $\Gamma$-confluent. 
Suppose that $f$ is not an identity and assume that for every pair $(u',v')$ of $(n-1)$-cells in $R_{n-1}^\ast$ such that there is an $n$-cell $(u,u) \fl (u',v')$ in $\Saux$, every $S$-branching $(f',e',g')$ of source $(u',v')$ is $\Gamma$-confluent.

Prove that the branching $(f,e)$ is $\Gamma$-confluent.
We proceed by induction on $\norm{e}_E \geq 1$. If $\norm{e}_E = 1$, $(f,e)$ is a local $S$-branching and it is $\Gamma$-confluent by hypothesis. 
Now, let us assume that for $k \geq 1$, every $S$-branching $(f'',e'')$ such that $\norm{e''}_E = k$ is $\Gamma$-confluent, and let us consider a $S$-branching of the form $(f,e)$ with source $u$, such that $\norm{e}_E = k+1$. Let us write $e = e_1 \star_{n-1} e_2$ with $e_1$ in $\Eo$ and $e_2$ in $\tck{E}$. Using local $\Gamma$-confluence on the $S$-branching $(f,e_1)$ of source $u$, there exist $n$-cells $f'$ and $f_1$ in $S^\ast$, an $n$-cell $e'_1: t_{n-1}(f') \fl t_{n-1}(f_1)$ in $\tck{E}$ and an~$(n+1)$-cell $A$ in $\sqconf{\Gamma}$ such that $\st^h_{-,n}(A)=f \star_{n-1} f'$ and $\st^h_{+,n}(A)=f_1$. Then, write $f_1 = f_1^1 \star_{n-1} f_1^2$ with $f_1^1$ in $\So$ and $f_1^2$ in $S^\ast$. Using the induction hypothesis on the $S$-branching $(f_1^1,e_2)$ with source $u_1 := t_{n-1}(e_1) = s_{n-1}(e_2)$, there exist $n$-cells $f'_1$ and $g$ in $S^\ast$, an $n$-cell $e_2: t_{n-1}(f'_1) \fl t_{n-1}(g)$ in $\tck{E}$ and an~$(n+1)$-cell $B$ in $\sqconf{\Gamma}$ such that $\st^h_{-,n}(B)=f_1^1 \star_{n-1} f'_1$ and $\st^h_{+,n}(B)=g$.
This can be represented by the following diagram:
\[
\xymatrix@R=2.25em@C=5.5em{ u \ar [d] _-{e_1} \ar [r] ^-{f} & u' \ar [r] ^-{f'} & u'' \ar [d] ^-{e'_1} \\
u_1 \ar [d] _-{\rotatebox{90}{=}} \ar [r] |-{f_1^1} ^-{}="2" & u'_1 \ar [d] ^-{\rotatebox{90}{=}} \ar [r] |-{f_1^2} & u''_1  \\
u_1 \ar [r] |-{f_1^1} _-{}="3" \ar [d] _ -{e_2} & u'_1 \ar [r] |-{f'_1} & u'_2 \ar [d] ^-{e'_2} \\
v \ar [rr] _-{g} ^-{}="1" & & v' 
\ar@2{} "1,2";"2,2" |{\text{\emph{Local $\Gamma$-confluence}}}
\ar@2{} "3,2";"1" |{\text{\emph{Induction on $\norm{e}_E$}}}
\ar@2{} "2";"3" |{i_1^h(f_1^1)}   } 
\]
Now, there is an $n$-cell $(u,u) \fl (u'_1,u'_1)$ in $\Saux$ given by the composite 
\[ 
(u,u) \fl (u_1,u_1) \fl (u_1,u'_1) \fl (u'_1,u'_1) 
\] 
where the first step exists because $\norm{e_1}_E > 0$ and the remaining composite is as in \eqref{SSS:InductionPrincipale}. Then, we apply double induction on the $S$-branching $(f_1^2, f_1')$ of source $(u'_1,u'_1)$: there exist $n$-cells $f_2$ and $f'_2$ in $S^\ast$ and an $n$-cell $e_3: t_{n-1}(f_2) \fl t_{n-1}(f'_2)$ in $\tck{E}$. By a similar argument, we can apply double induction on the $S$-branchings $(f_2,(e'_1)^-)$ and $(f'_2, e'_2)$, so that there exist $n$-cells $f''$,$f_3$, $f'_3$ and $g'$ in $S^\ast$ and $n$-cells $e''_1: t_{n-1}(f'') \fl t_{n-1}(f_3)$ and $e''_2: t_{n-1}(f'_3) \fl t_{n-1}(g')$  as in the following diagram:
\[
\xymatrix@R=2.25em@C=5.5em{ u \ar [d] _-{e_1} \ar [r] ^-{f} & u' \ar [r] ^-{f'} & u'' \ar [d] ^-{e'_1} \ar [rr]^-{f''} ^-{}="4" & & u''' \ar [d] ^-{e''_1} \\
u_1 \ar [d] _-{\rotatebox{90}{=}} \ar [r] |-{f_1^1} ^-{}="2" & u'_1 \ar [d] ^-{\rotatebox{90}{=}} \ar [r] |-{f_1^2} & u''_1 \ar [r] |-{f_2}  & w_1 \ar [r] |-{f_3} \ar [d] ^-{e_3} & w'_1  \\
u_1 \ar [r] |-{f_1^1} _ -{}="3" \ar [d] _ -{e_2} & u'_1 \ar [r] |-{f'_1} & u'_2 \ar [d] ^-{e'_2} \ar [r] |-{f'_2}  & w_2 \ar [r] |-{f'_3} & w'_2  \ar [d] ^-{e''_2} \\
v \ar [rr] _-{g} ^-{}="1" & & v' \ar [rr] _ -{g'} _-{}="5" & & v'' 
\ar@2{} "1,2";"2,2" |{\text{\emph{Local $\Gamma$-confluence}}}
\ar@2{} "3,2";"1" |{\text{\emph{Induction on $\norm{e}_E$}}} 
\ar@2{} "2";"3" |{i_1^h(f_1^1)}  
\ar@2{} "2,3";"3,3" |{\text{\emph{Double induction}}}
\ar@2{}  "4";"2,4" |{\text{\emph{Double induction}}}
\ar@2{}   "3,4";"5" |{\text{\emph{Double induction}}} } \]   
We can then repeat the same process using double induction on the $S$-branching $(f_3,e_3,f'_3)$ with source $(w_1,w_2)$, and so on. This process terminates in finitely many steps, otherwise it leads to an infinite $S$-rewriting path with source $u_1$, which is not possible since $\Pr$ is terminating. This proves the $\Gamma$-confluence of the branching $(f,e)$.
\end{proof}

\begin{theorem}[Coherent Newman's lemma modulo]
\label{T:ConfluenceTheorem}
Let $\Pr$ be a terminating $n$-polygraph modulo, and $\Gamma$ be a square extension of $\Pr$. If $\Pr$ is locally $\Gamma$-confluent, then it is $\Gamma$-confluent.
\end{theorem}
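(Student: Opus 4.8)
The plan is to prove the statement by Huet's double Noetherian induction on the auxiliary $n$-polygraph $\Saux$ introduced in~\ref{SSS:InductionPrincipale}, establishing for every pair $(u,v)$ of $(n-1)$-cells of $R_{n-1}^\ast$ the predicate $\mathcal{P}(u,v)$ of Subsection~\ref{SS:CoherentNewmanModulo}: \emph{every branching $(f,e,g)$ of $S$ modulo $E$ with source $(u,v)$ is $\Gamma$-confluent modulo $E$}. Since $\ERE$ is terminating, so is $S$, and hence by the principle recalled in~\ref{SSS:InductionPrincipale} the polygraph $\Saux$ is terminating; Noetherian induction on $\Saux$ is therefore available. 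I would fix $(u,v)$, assume $\mathcal{P}(u',v')$ for every $(u',v')$ admitting a nontrivial $\Saux$-reduction $(u,v)\fl(u',v')$, and prove $\mathcal{P}(u,v)$. If $u$ is $S$-irreducible there is nothing to prove, since any branching modulo $E$ has a non-trivial first component $f$; so I may assume $u$ reduces, and given $(f,e,g):(u,v)\dfl(u',v')$ I split off the first rewriting step, writing $f = f_1\star_{n-1}f_2$ with $f_1$ in $\So$ and $f_2$ in $S^\ast$ of strictly smaller size, and likewise $g=g_1\star_{n-1}g_2$ whenever $g$ is non-trivial.

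The heart of the argument is to resolve the interaction of the single step $f_1$ with the equivalence cell $e$ (and with the first step $g_1$). This is exactly what the coherent half Newman's lemma modulo, Proposition~\ref{P:CoherentHalfNewmanModulo}, supplies: applying its condition~{\bf i)} to the branching $(f_1,e)$, with $f_1$ in $\So$, produces $n$-cells $f_1',g'$ in $S^\ast$, an $n$-cell $e_1$ in $\tck{E}$, and a square $(n+1)$-cell $A_1$ in $\sqconf{\Gamma}$ tiling this confluence. The square $A_1$ now creates genuinely smaller branchings: the remaining step $f_2$ faces the newly produced cell $f_1'$, and $g_2$ faces $g'$. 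I would then feed these back into the induction, exactly as in the tiling diagram of the proof of Proposition~\ref{P:CoherentHalfNewmanModulo}: the descent of the source pair in the first and second components is witnessed by cases~{\bf i)} and~{\bf ii)} of the definition of $\Saux$ (peeling an $S$-step), while descent along the $E$-direction is witnessed by case~{\bf v)}, whose length condition $\ell(e_1)>\ell(e_3)$ is precisely what makes shortening an $E$-cell count as a strict decrease. Applying $\mathcal{P}$ to these strictly smaller branchings yields further square cells in $\sqconf{\Gamma}$.

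It then remains to assemble all of these squares into a single coherence cell $A$ with the required boundary. I would compose them by the operations $\cdg^v$, $\cdg^h$ and $\star_{n-1}$ of the free $(n-1)$-category enriched in double categories $\sqconf{\Gamma}$, reconciling the $E$-cells that appear along their common horizontal boundaries by means of the biaction $E\rtimes\Gamma$ of $\tck{E}$ on $\Sqr(\tck{E},S^\ast)$; the three biaction identities of Subsection~\ref{SS:CoherentNewmanModulo} are exactly what is needed to transport the auxiliary $E$-cells across the tiles so that the pasting typechecks. The resulting pasted square is the coherence $(n+1)$-cell $A$ witnessing $\Gamma$-confluence modulo $E$ of $(f,e,g)$, which establishes $\mathcal{P}(u,v)$ and completes the induction.

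The main obstacle is bookkeeping, on two fronts. First, one must check that the tiling process halts: each recursion strictly decreases the source pair in $\Saux$, and an infinite chain of recursions would produce an infinite $S$-rewriting sequence, contradicting termination of $\ERE$; making this airtight requires the careful navigation through cases~{\bf i)}--{\bf v)} of $\Saux$ exactly as in~\ref{SSS:InductionPrincipale}, case~{\bf v)} being the delicate one. Second, one must verify that the assembled square is well defined, i.e.\ that the distinct ways of composing the constituent tiles agree; this is where the associativity, identity and middle four interchange relation~(\ref{E:MiddleFourIdentities}), the exchange relations~(\ref{E:ExchangeRelations1})--(\ref{E:ExchangeRelations2}), and the biaction identities are jointly invoked. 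None of these steps is conceptually deep, but orchestrating the two-dimensional coherence of the pasting simultaneously with the well-foundedness of the recursion is the subtle part of the proof.
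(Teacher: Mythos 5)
Your overall strategy --- Huet's double Noetherian induction on $\Saux$ with the predicate $\mathcal{P}(u,v)$, peeling off the first rewriting steps $f_1$ and $g_1$, invoking Proposition~\ref{P:CoherentHalfNewmanModulo} on the branching $(f_1,e)$, and pasting the resulting squares in $\sqconf{\Gamma}$ using the compositions and the biaction --- is indeed the paper's strategy. But there is a genuine gap at the centre of the argument: you never resolve the local branching formed by two $S$-rewriting steps issuing from the common source $v$ of $g$, and the mechanism you offer for it is the wrong one. Proposition~\ref{P:CoherentHalfNewmanModulo} only treats branchings $(f,e)$ of an $S$-reduction against an $E$-cell, with the second $S$-branch trivial; it does not, as your parenthetical claims, also resolve the interaction of $f_1$ with $g_1$. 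After the half-Newman step produces a reduction $h_1$ out of $v$, one must confront the branching $(h_1,g)$ at source $(v,v)$. This pair is not in general strictly below $(u,v)$ in $\Saux$ (when $e$ is an identity one has $(v,v)=(u,u)=(u,v)$, and case \textbf{v)} of the definition of $\Saux$ requires $\ell(e)>0$), so the induction hypothesis is unavailable there; the paper instead peels $h_1=h_1^1\star_{n-1}h_1^2$ and $g=g_1\star_{n-1}g_2$ and invokes the local $\Gamma$-confluence hypothesis \emph{directly} on the local branching $(h_1^1,g_1)$ --- the only place in the entire Newman argument where the $S$-versus-$S$ half of local confluence enters --- before feeding the strictly smaller residual branchings back into the double induction. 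The substitute you propose, that ``$g_2$ faces $g'$'', is not even a well-formed branching: $g_2$ starts at the target of $g_1$ while $g'$ starts at $v$.

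A quick sanity check makes the gap visible: take $E$ empty. Then every branching modulo is an ordinary branching $(f,1,g)$, Proposition~\ref{P:CoherentHalfNewmanModulo} applied to $(f_1,1)$ returns only an aspherical square, and your argument collapses to ``peel one step from each side and induct'' --- a purported proof of Newman's lemma that never uses the local-confluence hypothesis on any genuine two-sided local branching, which cannot be correct. To repair the proposal you need the paper's case analysis ($\ell(g)=0$ versus $\ell(g)>0$, and, in the latter case, $h_1$ trivial versus non-trivial), with the explicit appeal to local $\Gamma$-confluence of the local branching $(h_1^1,g_1)$ in the non-trivial subcase; the rest of your outline (assembly of the tiles, well-definedness via the interchange and exchange relations and the biaction identities, and termination of the recursive tiling) then goes through as you describe.
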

\begin{proof}
We set $\Pr=(R,E,S)$ and we prove that every $S$-branching $(f,e,g)$ is $\Gamma$-confluent. Let us choose such a $S$-branching and denote by $(u,v)$ its source. We assume that every $S$-branching modulo $(f',e',g')$ with source $(u',v')$ such that there is an $n$-cell $(u,v) \fl (u',v')$ in $\Saux$ is $\Gamma$-confluent. We follow the proof scheme used by Huet in \cite[Lemma 2.7]{Huet80}. Denote by $n := \norm{f}_S$ and $m:= \norm{g}_S$. We assume without loss of generality that $n > 0$ and we set $f = f_1 \star_{n-1} f_2$, with $f_1$ in $\So$ and $f_2$ in $S^\ast$. 

If $m = 0$, by Proposition \ref{P:CoherentHalfNewmanModulo} on the $S$-branching $(f_1,e)$, there exist $n$-cells $f'_1,g'$ in $S^\ast$, an $n$-cell $e': t_{n-1}(f'_1) \fl t_{n-1}(g')$ and an~$(n+1)$-cell $A$ in $\sqconf{\Gamma}$ such that $\st^h_{-,n}(A)=f_1 \star_{n-1} f'_1$ and $\st^h_{+,n}(A)=g'$. Then, since there is an $n$-cell $(u,u) \fl (u_1,u_1)$ in $\Saux$ with $u_1 := t_{n-1}(f_1)$, we can apply double induction on the $S$-branching $(f_2,f'_1)$ as in the following diagram:
\[
\xymatrix@R=2.25em @C=5em{ 
u \ar [r] ^-{f_1} ^-{}="2" \ar [d] _-{\rotatebox{90}{=}} & u_1 \ar [r] ^-{f_2} \ar [d] ^-{\rotatebox{90}{=}} & u_2 \ar [r]^-{f'_2} & u'_2 \ar [d] ^-{}\\
u \ar [d] _ -{e} \ar [r] |-{f_1} _ -{}="3" & u_1 \ar [r] |-{f'_1} & u_2 \ar [r] |-{f''_1} \ar [d] ^-{e'} & u'_2 \\
v \ar [rr] _ -{g'} ^-{}="1" & & v' & 
\ar@2{} "2,2";"1" |{\text{\emph{Proposition \ref{P:CoherentHalfNewmanModulo}}}} \ar@2{} "2";"3" |{i_1^h(f_1)}
\ar@2{} "1,3";"2,3" |{\text{\emph{Double induction}}} } \]
We finish the proof of this case with a similar argument than in \eqref{P:CoherentHalfNewmanModulo}, using repeated double inductions that can not occur infinitely many times since $\Pr$ is terminating.

Now, assume that $m > 0$ and we set $g = g_1 \star_{n-1} g_2$, with $g_1$ in $\So$ and $g_2$ in $S^\ast$. By Step $1$ on the $S$-branching $(f_1,e)$, there exist $n$-cells $f'_1,h_1$ in $S^\ast$, an $n$-cell $e_1 : t_{n-1}(f'_1) \fl t_{n-1}(h_1)$ in $\tck{E}$, and an~$(n+1)$-cell $A$ in $\sqconf{\Gamma}$ such that $\st^h_{-,n}(A)=f_1 \star_{n-1} f'_1$ and $\st^h_{+,n}(A)=h_1$. We distinguish two cases whether $h_1$ is an identity or not. 

\medskip

If $h_1$ is an identity $n$-cell, the $\Gamma$-confluence of the $S$-branching $(f,e,g)$ is given by the following diagram
\[
\xymatrix@C=5em@R=2.25em {
u \ar [d] _ -{\rotatebox{90}{=}} \ar [r] ^-{f_1} ^-{}="11" & u_1 \ar [d] ^-{\rotatebox{90}{=}} \ar [r] ^-{f_2} & u_2 \ar [rrr] ^-{f'_2} & & & u'_2 \ar [d] ^-{}    & \\
u \ar [r] |-{f_1} _-{}="12" \ar [d] _ -{e} & u_1 \ar [r] |-{f'_1} & u'_1 \ar [rr] |-{f_3} ^-{}="1" \ar [d] ^-{e'}  &  & u_3 \ar [d] ^-{e_1} \ar [r] ^-{f_4} ^-{}="8" & u_4 \ar [r] |-{f_5} & u_5 \ar [d] ^-{} \\
v \ar [d] _ -{\rotatebox{90}{=}} \ar [rr] |-{1_v} ^-{}="2" & & v \ar [d] _-{\rotatebox{90}{=}} \ar [r] |-{g_1} ^-{}="5" & v'_1 \ar [d] ^-{\rotatebox{90}{=}} \ar [r] |-{g'_1} & v''_1 \ar [r] |-{g''_1} & w_1 \ar [d] ^-{} \ar [r] ^{g_3} & w_3 \\
v \ar [rr] _-{1_v} _-{}="3" & & v \ar[r] |-{g_1} _-{}="6" & v'_1 \ar [r] _-{g_2} & v_2 \ar [r] _ -{g'_2} & w_2 &  
\ar@2{} "3,4";"2,4" |{\emph{\text{Proposition \ref{P:CoherentHalfNewmanModulo}}}}
\ar@2{} "2";"2,2" |{\emph{\text{Proposition \ref{P:CoherentHalfNewmanModulo}}}}
\ar@2{} "11";"12" |{i_1^h(f_1)} 
\ar@2{} "2";"3" |{i_1^h(1_v)} 
\ar@2{} "5";"6" |{i_1^h(g_1)} 
\ar@2{} "1,3";"2,3"!<75pt,0pt> |{\emph{\text{Double induction}}}
\ar@2{} "2,6";"3,6" |{\emph{\text{Double induction}}}
\ar@2{} "3,5";"4,5" |{\emph{\text{Double induction}}}
} \]
where the $S$-branchings $(f_1,e)$ and $(g_1,e')$ are $\Gamma$-confluent by Proposition \ref{P:CoherentHalfNewmanModulo}, double induction applies on the branchings $(f_2,f'_1 \star_{n-1} f_3)$, $(g'_1,g_2)$ and $(f_4,e_1,g''_1)$ since there are $n$-cells 
\[ (u,v) \fl (u,u) \fl (u_1,u_1) \; , \;  (u,v) \fl (v,v) \fl (v,v'_1) \fl (v'_1,v'_1) \; \text{and} \; (u,v) \fl (u_3, v) \fl (u_3,v''_1) \] 
in $\Saux$ and we check that this process of double induction can be repeated, and terminates in a finite number of steps since $\Pr$ is terminating. This gives a $\Gamma$-confluence of the $S$-branching $(f,e,g)$.

If $h_1$ is not an identity $n$-cell, let us write $h_1 = h_1^1 \star_{n-1} h_1^2$ with $h_1^1$ in $\So$ and $h_1^2$ in $S^\ast$. The $\Gamma$-confluence of the $S$-branching $(f,e,g)$ is given by the following diagram:

\[ \xymatrix@R=2.25em@C=5em {
u \ar [d] _-{\rotatebox{90}{=}} \ar [r] ^-{f_1} ^-{}="1" & u_1 \ar [d] ^-{\rotatebox{90}{=}}  \ar [r] ^-{f_2} & u_2 \ar [r] ^-{f'_2} & u'_2 \ar [d] ^-{} & \\
u \ar [d] _ -{e} \ar [r] |-{f_1} _-{}="2" & u_1 \ar [r] |-{f'_1} & u'_1 \ar [d] ^-{} \ar [r]|-{f_3} & u_3 \ar [r] |-{f_4} & u_4 \ar [d] ^-{} \\
v \ar [d] _-{\rotatebox{90}{=}} \ar [r] |-{h_1^1} ^-{}="5" & v_1 \ar [d] ^-{\rotatebox{90}{=}} \ar [r] |-{h_1^2} & w_1 \ar [r] |-{h_2} & w_2 \ar [d] ^-{} \ar [r] |-{h'_2} & w'_2 \\
v \ar [d] _-{\rotatebox{90}{=}} \ar [r] |-{h_1^1} _-{}="6" &  v_1 \ar [r] |-{h'_1} & w'_1 \ar [d] ^-{} \ar [r] |-{h_3} & w_3 \ar [r]  |-{h'_3} & w'_3 \ar [d] ^-{} \\
v  \ar [d] _-{\rotatebox{90}{=}} \ar [r] |-{g_1} ^-{}="3" & v' \ar [d] ^-{\rotatebox{90}{=}} \ar [r] |-{g'_1} & v'_1 \ar [r] |-{g'_2} & v'_2 \ar [r] |-{g'_3} \ar [d] ^-{} & v'_3 \\
v \ar [r] _-{g_1} _-{}="4" & v' \ar [r] _-{g_2} & v_2 \ar [r] _-{g_3} & v_3 & 
\ar@2{} "1";"2" |{i_1^h(f_1)} 
\ar@2{} "3";"4" |{i_1^h(g_1)}  
\ar@2{} "5";"6" |{i_1^h(h_1^1)}
\ar@2{} "2,2";"3,2" |{\text{\emph{Proposition \ref{P:CoherentHalfNewmanModulo}}}} 
\ar@2{} "4,2";"5,2" |{\text{\emph{Local $\Gamma$-confluence}}}
\ar@2{} "1,3";"2,3" |{\text{\emph{Double induction}}}
\ar@2{} "2,4";"3,4" |{\text{\emph{Double induction}}}
\ar@2{} "3,3";"4,3" |{\text{\emph{Double induction}}}
\ar@2{} "5,4";"4,4" |{\text{\emph{Double induction}}}
\ar@2{} "5,3";"6,3" |{\text{\emph{Double induction}}}
 } \]
where the $S$-branching $(f_1,e)$ is $\Gamma$-confluent by Proposition \ref{P:CoherentHalfNewmanModulo}, the $S$-branching $(h_1^1,g_1)$ is $\Gamma$-confluent by assumption of local $\Gamma$-confluence of $\Pr$, and we check that double induction applies on the $S$-branchings $(f_2,f'_1)$, $(h_1^2,h'_1)$, $(g'_1,g_2)$, $(f_3,h_2)$ and $(h_3,g'_2)$. This process of double induction can be repeated, and gives a $\Gamma$-confluence of the $S$-branching $(f,e,g)$ in a finite number of steps, since $\Pr$ is terminating.
\end{proof}

\subsection{Coherent critical branching lemma modulo}
In this subsection, we prove coherent local confluence of an $n$-polygraph modulo from coherent confluence of some critical branchings.

\begin{proposition}
\label{P:CoherentCriticalBranchingProposition}
Let $\Pr=(R,E,S)$ be a terminating $n$-polygraph modulo, and $\Gamma$ be a square extension of $\Pr$. Then $\Pr$ is $\Gamma$-locally confluent if, and only if, the following two conditions hold:
\begin{enumerate}[\;\;{\bf a)}]
\item every local strict $S$-branching $(f,g)$ with $f$ in
$S^{\ast(1)}$ and $g$ in $R^{\ast(1)}$ is $\Gamma$-confluent,
\item every local $S$-branching $(f,e)$ with $f$ in
$\So$ and $e$ in $\Eo$ is $\Gamma$-confluent.
\end{enumerate}
\end{proposition}
\begin{proof}
We proceed by double induction.
The only part is trivial because properties {\bf a)} and {\bf b)} correspond to $\Gamma$-confluence of some local $S$-branchings. Conversely, assume that $\Pr$ satisfies properties {\bf a)} and {\bf b)} and prove that every local $S$-branching is $\Gamma$-confluent. We consider a local $S$-branching $(f,e,g)$, and assume without loss of generality that $f$ is a non-trivial $n$-cell in $\So$. There are two cases: either $g$ is trivial, and the local $S$-branching $(f,e)$ is $\Gamma$-confluent by {\bf b)}, or $e$ is trivial. In that case, if $g$ is in $\Ro$, then $\Gamma$-confluence of the branching $(f,g)$ is given by {\bf a)}. Otherwise, let us write $g = e_1 \star_{n-1} g' \star_{n-1} e_2$ with $e_1$,$e_2$ in $\tck{E}$ and $g'$ in $\Ro$. 
Now, let us prove the confluence of the $S$-branching 
\[ 
\xymatrix @R=2em @C=2em{
u \ar [r]^-{f} \ar [d] _-{e_1} & v \\
u' \ar [r] _-{g' e_2} & v'
}
\]
where $g' \star_{n-1} e_2$ is an $n$-cell in $\So$. We will then prove the $\Gamma$-confluence of the branching $(f,g)$ using the biaction of $\tck{E}$ on $\Sq (\tck{E}, S^\ast)$.
Using Proposition \ref{P:CoherentHalfNewmanModulo} on the $S$-branching $(f,e_1)$, there exist $n$-cells $f',f_1$ in $S^\ast$, an $n$-cell $e' : t_{n-1}(f') \fl t_{n-1}(f_1)$ and an~$(n+1)$-cell $A$ in $\sqconf{\Gamma}$ such that $\st^h_{-,n}(A)=f \star_{n-1} f'$ and $\st^h_{+,n}(A)=f_1$. Using property {\bf a)} on the local $S$-branching $(g',g' \star_{n-1} e_2)$ with $g'$ in $\Ro$ and $g' \star_{n-1} e_2$ in $\So$ and the trivial confluence given by the right vertical cell $e_2$, there exists an $(n+1)$-cell $B$ in $\sqconf{\Gamma}$ such that $\st^h_{-,n}(B)=g'$ and $\st^h_{+,n}(B)=g' e_2$. Let us choose a decomposition $f_1 = f_1^1 \star_{n-1} f_1^2$, with $f_1^1$ in $\So$ and $f_1^2$. By property {\bf a)} on the local $S$-branching $(f_1^1,g')$, there exist $n$-cells $f'_1$ and $g'_1$ in $S^\ast$, an $n$-cell $e'' : t_{n-1}(f'_1) \fl t_{n-1}(g'_1)$ and an~$(n+1)$-cell $C$ in $\sqconf{\Gamma}$ such that $\st^h_{-,n}(C)=f_1^1 \star_{n-1} f'_1$ and $\st^h_{+,n}(C)=g' \star_{n-1} g'_1$ as depicted on the following diagram:
\[
\xymatrix@R=2em@C=3em{ u \ar [d] _-{e_1} \ar [r] ^-{f} & u' \ar [r] ^-{f'} & u'' \ar [d] ^-{e'_1} \\
u_1 \ar [d] _-{\rotatebox{90}{=}} \ar [r] |-{f_1^1} ^-{}="2" & u'_1 \ar [d] ^-{\rotatebox{90}{=}} \ar [r] |-{f_1^2} & u''_1  \\
u_1 \ar [r] |-{f_1^1} _-{}="3" \ar [d] _ -{\rotatebox{90}{=}} & u'_1 \ar [r] |-{f'_1} & u'_2 \ar [d] ^-{e'_2} \\
v \ar [r] |-{g'} ^-{}="1" \ar [d]_ -{\rotatebox{90}{=}}  & v_1 \ar [r] _ -{g'_2}  \ar [d] ^-{e_2} & v_2 \\
v \ar [r] _-{g' e_2} ^-{}="9" & v' &
\ar@2 "1,2"!<0pt,-11pt>;"2,2"!<0pt,11pt> ^-{A}
\ar@2 "3,2"!<0pt,-11pt>;"4,2"!<0pt,11pt> ^-{C}
\ar@2 "1"!<0pt,-11pt>;"9"!<0pt,+11pt> ^-{B}
\ar@2{} "2";"3" |{i_1^h(f_1^1)}   } \]

There are $n$-cells $(u,u) \fl (u'_1,u'_1)$ and $(u,u) \fl (v_1,v_1)$ in $\Saux$ given by the following composites
\[ (u,u) \fl (u_1,u_1) \fl (u_1,u'_1) \fl (u'_1,u'_1) \]
\[ (u,u) \fl (u_1,u_1) \fl (u_1,v) \fl (v,v) \fl (v,v_1) \fl (v_1,v_1) \]
so that we can apply double induction on the $S$-branchings $(f_1^2,f'_1)$ and $(g'_2,e_2)$, and we finish the proof of $\Gamma$-confluence of the $S$-branching $(f,e_1,g' e_2)$ using repeated double inductions, terminating in a finite number of steps since $\Pr$ is terminating.

Now, we get the $\Gamma$-confluence of the $S$-branching $(f,g)$ by the following diagram:
\[
\xymatrix@R=2em@C=3em{ u \ar [d] _-{\rotatebox{90}{=}} \ar [r] ^-{f} & u' \ar [r] ^-{f'} & u'' \ar [d] ^-{e'_1} \\
u_1 \ar [d] _-{\rotatebox{90}{=}} \ar [r] |-{e_1 f_1^1} ^-{}="2" & u'_1 \ar [d] ^-{\rotatebox{90}{=}} \ar [r] |-{f_1^2} & u''_1  \\
u_1 \ar [r] |-{e_1 f_1^1} _-{}="3" \ar [d] _ -{\rotatebox{90}{=}} & u'_1 \ar [r] |-{f'_1} & u'_2 \ar [d] ^-{e'_2} \\
v \ar [r] |-{e_1 g'} ^-{}="1" \ar [d]_ -{\rotatebox{90}{=}}  & v_1 \ar [r] _ -{g'_2}  \ar [d] ^-{e_2} & v_2 \\
v \ar [r] _-{e_1 g' e_2} ^-{}="9" & v' &
\ar@2{} "2";"3" |{i_1^h(e_1f_1^1)}  
\ar@2 "1,2"!<0pt,-11pt>;"2,2"!<0pt,11pt> _-{{}^1_{e_1}A}
\ar@2 "3,2"!<0pt,-11pt>;"4,2"!<0pt,11pt> _-{{}^{e_1}_{e_1^-} C}
\ar@2 "1"!<+6pt,-11pt>;"9"!<+6pt,11pt> _{{}^{e_1}_{e_1^-} B} } 
\]
since the top rectangle is by definition tiled by the $(n+1)$-cell ${}^1_{e_1} A$, the bottom rectangle is tiled by the $(n+1)$-cell ${}^{e_1}_{e_1^-} B$ and the remaining rectangle is tiled by the $(n+1)$-cell ${}^{e_1}_{e_1^-} C$. The rest of the confluence diagram is tiled in the same way as above. 
\end{proof}

\subsubsection{Coherent critically confluence}
Given $\Gamma$ a square extension of $\Pr$, we say that $\Pr$ is \emph{$\Gamma$-critically confluent} if it satisfies the following two conditions:
\begin{description}
\item[\;\;$\mathbf{a_0)}$] every strict $S$-critical branching $(f,g)$ with $f$ in
$S^{\ast(1)}$ and $g$ in $R^{\ast(1)}$ is $\Gamma$-confluent,
\item[\;\;$\mathbf{b_0)}$] every $S$-critical branching $(f,e)$ with $f$ in
$\So$ and $e$ in $\Eo$ is $\Gamma$-confluent.
\end{description}

\begin{theorem}[Coherent critical branching lemma modulo]
\label{T:CoherentCriticalBranchingTheorem}
Let $\Pr$ be a terminating $n$-polygraph modulo, and $\Gamma$ be a square extension of $\Pr$. Then $\Pr$ is $\Gamma$-locally confluent if, and only if, it is $\Gamma$-critically confluent.
\end{theorem}
\begin{proof}
By Proposition~\ref{P:CoherentCriticalBranchingProposition}, the local $\Gamma$-confluence is equivalent to both conditions {\bf a)} and {\bf b)}. Let us prove that the condition {\bf a)} (resp. {\bf b)}) holds if, and only if, the condition $\mathbf{a_0)}$ (resp. $\mathbf{b_0)}$) holds. One implication is trivial. 
Suppose that condition $\mathbf{b_0)}$ holds and prove condition {\bf b)}. The proof of the other implication is similar.
We examine all the possible forms of local $S$-branchings given in \eqref{SSS:LocalBranchings}.  Local aspherical $S$-branchings and local Peiffer $S$-branchings of the form~\eqref{E:LocalPeifferModulo} are trivially confluent:
\[
\xymatrix @R=2em @C=2.4em {
u\star_i v 
  \ar[r] ^-{f\star_i v} 
  \ar[d] _-{u\star_i e}
& 
u'\star_i v
  \ar[d] ^-{u'\star_i e}
\\
u\star_i v' 
  \ar[r] _-{f\star_i v'} 
&
u'\star_i v'
}
\qquad\qquad
\xymatrix @R=2em @C=2.4em {
w\star_i u 
  \ar[r] ^-{w\star_i f} 
  \ar[d] _-{e'\star_i u}
& 
w\star_i u'
  \ar[d] ^-{e'\star_i u'}
\\
w'\star_i u 
   \ar[r] _-{w'\star_i f} 
&
w'\star_i u'
}
\]
and $\Gamma$-confluent by definition of $\Gamma$-confluence. The other local $S$-branchings are overlapping $S$-branchings of the form $(f,e) : u \dfl (u',v)$ as in~\eqref{E:OverlappingBranchingModulo}, where $f$ is an $n$-cell of $S^{\ast(1)}$ and $e$ is an $n$-cell of $E^{\top (1)}$. By definition, there exists a whisker $C$ on $R_{n-1}^\ast$ and a critical $S$-branching $(f',e') : u_0 \dfl (u'_0,v_0)$ such that $f=C[f']$ and $e=C[e']$. Following condition $\mathbf{b_0)}$ the branching $(f',e')$ is $\Gamma$-confluent, that is there exists a $\Gamma$-confluence:
\[
\xymatrix @R=2em @C=2em{
u
   \ar[r] ^-{f'}
   \ar[d] _-{e'}
&
v \ar@1@{.>} [r] ^-{f''} & v' \ar@1@{.>} [d] ^-{e''} \\
u'
   \ar@{.>} [rr] _-{g'} _-{}="1"
&  & w
\ar@2@{.>} "1,2"!<0pt,-10pt>;"1"!<0pt,10pt> ^-{A} 
}
\]
inducing a $\Gamma$-confluence for $(f,e)$:
\[
\xymatrix @R=2em @C=2.4em {
C[u]
   \ar[r] ^-{C[f']}
   \ar[d] _-{C[e']}
&
C[v] \ar@1@{.>} [r] ^-{C[f'']} & v' \ar@1@{.>} [d] ^-{C[e'']} \\
C[u']
   \ar@{.>} [rr] _-{C[g']} _-{}="1"
&  & w
\ar@2@{.>} "1,2"!<0pt,-10pt>;"1"!<0pt,10pt> ^-{C[A]} 
}
\]
This proves the condition {\bf b)}. 
\end{proof}
		
\section{Coherent completion modulo}
\label{S:CoherentCompletionModulo}

We construct a double coherent presentation of an $(n-1)$-category, starting with one of its presentations by an $n$-polygraph modulo, and by adding square cells given by the confluence diagrams of some of its critical branchings modulo. 
Let $\Pr=(R,E,S)$ denote a $n$-polygraph modulo.  

\subsection{Coherent completion modulo}
\label{SS:CoherentCompletionModulo}

Let us recall the notion of coherent completion of a convergent $n$-polygraph. We then define the notion of Squier's extension for polygraphs modulo.

\subsubsection{Coherent completion}
\label{SSS:CoherentCompletion}
Recall from \cite{GuiraudMalbos09} that a convergent $n$-polygraph $E$ can be extended into a coherent globular presentation of the $(n-1)$-category $\cl{E}$. Explicitly, if an $n$-polygraph $E$ is critically confluent, we define a \emph{family of generating confluences} of~$E$ as a cellular extension of the free $(n,n-1)$-category~$\tck{E}$ that contains one globular $(n+1)$-cell
\[
\xymatrix @R=0.8em@C=2.8em @!C{
& {v}
	\ar @/^/ [dr] ^{e_1}
	\ar@2 []!<0pt,-15pt>;[dd]!<0pt,15pt> ^-{E_{e,e'}}
\\
{u}
	\ar @/^/ [ur] ^{e}
	\ar @/_/ [dr] _{e'}
&& {w}
\\
& {v'}
	\ar @/_/ [ur] _{e_1'}
}
\]
for every critical branching $(e,e')$ of $E$, where $(e_1,e_1')$ is a chosen confluence.  Any $(n+1,n)$-polygraph obtained from~$E$ by adjunction of a family of generating confluences of $E$ is a globular coherent presentation of the $(n-1)$-category $\cl{E}$, \cite{GuiraudMalbos09}. This result was originally proved by Squier in \cite{Squier94} for~$n=2$.
We will consider a double $(n+1,n-1)$-polygraph $(E,\emptyset,\squier{E})$, where~$\squier{E}$ is a square extension of the $(n,n-1)$-categories $(\tck{E},1)$ seen as an $n$-category enriched in double groupoids that contains exactly one square $(n+1)$-cell
\[ 
\xymatrix@R=2em @C=2em{
u 
  \ar [d] _-{e} \ar [r] ^-{=} ^-{}="1"
& u 
  \ar [d] ^-{e'} \\
v 
  \ar@{.>} [d] _-{e_1} 
& v' 
  \ar@{.>} [d] ^-{e'_1} \\
w \ar@{.>} [r] _-{=} _-{}="2" & w 
\ar@2 "1"!<0pt,-15pt>;"2"!<0pt,15pt> |-{E_{e,e'}} }
\]
for every critical branching $(e,e')$ of $E$, and where $(e_1,e_1')$ is a chosen confluence.

\subsubsection{Squier's extensions modulo}
\label{SSS:SquierExtensionModulo}
A \emph{family of generating confluences} of $\Pr$ is a square extension of $\Pr$ whose elements are the square $(n+1)$-cells $A_{f,g}$ and $B_{f,e}$ of the following forms:
\begin{eqn}{equation}
\label{E:CellsCoherentCompletion}
\xymatrix @R=2em @C=2em{
u
  \ar [r] ^-{f}
  \ar [d] _-{\rotatebox{90}{=}}
&
u'
  \ar [r] ^-{f'}
  \ar@2 []!<0pt,-8pt>; [d]!<0pt,+8pt> ^-{A_{f,g}}
&
w
  \ar[d] ^-{e'}
\\
u
  \ar [r] _-{g}
&
v
  \ar [r] _-{g'}
&
w'
}
\qquad  \qquad
\xymatrix @R=2em @C=2em{
u
  \ar [r] ^-{f}
  \ar [d] _-{e}
&
u'
  \ar [r] ^-{f'}
  \ar @2 []!<0pt,-8pt>; [d]!<0pt,+8pt> ^-{B_{f,e}}
&
w
  \ar[d] ^-{e'}
\\
v
  \ar [rr] _-{g'}
&
&
w'
}
\end{eqn}
for all critical strict $S$-branching $(f,g)$ and critical $S$-branching $(f,e)$, where $f,g$ and $e$ are $n$-cells of $S^{\ast(1)}$, $R^{\ast(1)}$ and $E^{\top(1)}$ respectively. Such a family is not unique in general and depends on the $n$-cells~$f',g',e'$ chosen to obtain the confluence of the critical $S$-branchings.

In the rest of this article, we show how to extend a family of generating confluences $\Gamma$ of $\Pr$ to a coherent extension of $\Pr$. The coherent extension will contain the squares obtained by the biaction of $\tck{E}$ defined in \eqref{SSS:ActionOnSquare} on $\Gamma$, the Peiffer squares defined in \eqref{SSS:SquareExtensions}, and the square extension $\squier{E}$. Therefore, we define a \emph{Squier's extension of $\Gamma$} as a square extension of $\Pr$:
\[ \squier{\Gamma} := \sqext \cup \squier{E},
\]
where $\squier{E}$ is a square extension as in~\eqref{SSS:CoherentCompletion}.

\subsection{Coherence by $E$-normalization}

We construct a coherent extension of $\Pr$ under an assumption of confluence and normalization of $S$ with respect to $E$.

\subsubsection{Normalization in polygraphs modulo}
Let us recall the notion of normalization strategy for an $n$-polygraph $P$. Consider a section $s: \cl{P} \fl P_n^\ast$ of the canonical projection $\pi : P_n^\ast \fl \cl{P}$. For an $(n-1)$-cell $u$ in $\cl{P}$ we denote $\hat{u}:=s(u)$, so that $\pi(\hat{u})=u$. 
When $P$ is convergent, a \emph{normalization strategy for $P$ with respect to $s$} is a map
\[
\sigma : P_{n-1}^\ast \fl P_{n}^\ast
\]
that sends an $(n-1)$-cell $u$ of $P_{n-1}^\ast$ to an~$n$-cell $\sigma_u : u \fl \hat{u}$.

The $n$-polygraph modulo $\Pr$ is \emph{normalizing} if every $(n-1)$-cell $u$ in $R_{n-1}^\ast$ admits at least one $S$-normal form, that is $\nf(S,u)\neq \emptyset$.
It is \emph{$E$-normalizing} if $\nf(S,u)\cap \irr(E)\neq \emptyset$ for every $E$-irreducible $(n-1)$-cell $u$ of $R_{n-1}^\ast$. Note that when $S = \ERE$, if $\Pr$ is normalizing then it is $E$-normalizing.

\begin{theorem}
\label{T:CoherentAcyclicity}
Let $\Pr=(R,E,S)$ be an $E$-normalizing $n$-polygraph modulo, and $\Gamma$ be a square extension of $\Pr$ such that $\Pr$ is $\Gamma$-diconvergent. Then any Squier extension $\squier{\Gamma}$ is coherent.
\end{theorem}
\begin{proof}
Let us denote by $\Cr$ the free $n$-category enriched in double groupoid $\bcy{\Gamma}$ generated by the double $(n+1,n-1)$-polygraph $(E,S,\squier{\Gamma})$.
We denote by $\widetilde{u}$ the unique normal form of an~$(n-1)$-cell $u$ in $R_{n-1}^\ast$ with respect to $E$ and we fix a normalization strategy $\rho_u : u \fl \widetilde{u}$ for $E$.

Since $\Pr$ is terminating, the $n$-polygraph $(R_{\leq n-1},S)$ terminates and thus it is normalizing. Moreover, as $\Pr$ is $E$-normalizing, we can define a normalization strategy $\sigma_u : u \fl \hat{u}$ for the polygraph $(R_{\leq n-1},S)$ such that $\hat{u} \in \nf(S,u)\cap \irr(E)$, for every $u \in \irr(E)$. Consider a square
\begin{eqn}{equation}
\label{E:SquareThmCoherence}
\raisebox{0.6cm}{
\xymatrix@C=2em@R=2em{
u \ar [r] ^-{f} \ar [d] _-{e} & v \ar [d] ^-{e'} \\
u' \ar [r] _-{g} & v' 
}}
\end{eqn}
in $\Cr$. By definition the $n$-cell $f$ in $\tck{S}$ can be decomposed (in general in a non unique way) into a zigzag 
sequence
$ f = f_0 \star_{n-1} f_1^{-} \star_{n-1} \cdots \star_{n-1} f_{2n} \star_{n-1} f_{2n+1}^{-}$, where the~$f_{2k} :u_{2k} \fl u_{2k+1}$ and $f_{2k+1} : u_{2k+2} \fl u_{2k+1}$, for all $0\leq k \leq n$ are $n$-cell of~$S^\ast$, with $u_0=u$ and $u_{2n+2}=v$. 

By $\Gamma$-confluence of $\Pr$, there exist $n$-cells $e_{f_i}$ in $\tck{E}$ and $(n+1)$-cells $\sigma_{f_i}$ in $\Cr$ as in the following diagrams:
\[
\xymatrix @R=2em @C=3em {
u_{2k} 
  \ar[r] ^-{f_{2k}} 
  \ar[d] _-{\rho_{u_{2k}}}
& 
u_{2k+1} \ar [r] ^-{\sigma_{u_{2k+1}}} & \hat{u_{2k+1}} \ar@{.>} [d] ^-{e_{f_{2k}}}
\\
\widetilde{u_{2k}}
  \ar[rr] _-{\sigma_{\widetilde{u_{2k}}}} ^-{}="1" 
& & 
\hatt{u_{2k}}
\ar@2 "1,2"!<0pt,-10pt> ; "1"!<-3pt,10pt> ^-{\sigma_{f_{2k}}}
}
\qquad
\qquad
\xymatrix @R=2em @C=3em {
u_{2k+2}
  \ar[r] ^-{f_{2k+1}} 
  \ar[d] _-{\rho_{u_{2k+2}}}
& 
u_{2k+1} 
  \ar [r] ^-{\sigma_{u_{2k+1}}} & \hat{u_{2k+1}} \ar@{.>} [d] ^-{e_{f_{2k+1}}}
\\
\widetilde{u_{2k+2}}
  \ar[rr] _-{\sigma_{\widetilde{u_{2k+2}}}} ^-{}="1" 
& & 
\hatt{u_{2k+2}}
\ar@2 "1,2"!<0pt,-10pt> ; "1"!<0pt,10pt> ^-{\sigma_{{f}_{2k+1}}}
}
\]
for all $0\leq k \leq n$.
By definition of the normalization strategy $\sigma$, for every $0\leq i \leq 2n+1$, the $(n-1)$-cell~$\hatt{u}$ is an $E$-normal form, and by convergence of the $n$-polygraph $E$ it follows that~$\hatt{u_i} = \hatt{u_{i+1}}$. 

Moreover, for every $1\leq i \leq 2n+1$, there exists a square $(n+1)$-cell in $\Cr$ as in the following diagram:
\[
\xymatrix @R=2em @C=2em{
\hat{u_{i+1}}
  \ar[r] ^-{=} _-{}="1"
  \ar[d] _-{e_{f_{i}}}
& 
\hat{u_{i+1}}
  \ar[d] ^-{e_{f_{i+1}}}
\\
\hatt{u_i}
  \ar[r] _-{=} ^-{}="2" 
& 
\hatt{u_{i+2}}
\ar@2 "1"!<-3pt,-10pt> ; "2"!<0pt,10pt> ^-{E_{i+1}}
}
\]

We define a square $(n+1)$-cell $\sigma_f$ in $\Cr$ as the following $\cdg^v$-composite:
\[
\sigma_{f_0}
\cdg^v
E_1
\cdg^v
\sigma_{f_1}
\cdg^v
\sigma_{f_2}
\cdg^v
\ldots
\cdg^v
\sigma_{f_{2n}}
\cdg^v
E_{2n+1}
\cdg^v
\sigma_{f_{2n+1}},
\]
where for an even integer $i\geq 0$, we have
\[
\xymatrix@R=2em@C=2.3em{ u_{i} 
\ar [d] _-{\rho_{u_i}} \ar [r] ^-{f_i} 
& 
u_{i+1} \ar [r] ^-{\sigma_{u_{i+1}}} 
& \widehat{u_{i+1}} \ar [r] ^-{=} ^-{}="6" \ar [d] _-{e_{f_i}} & \widehat{u_{i+1}} \ar [d] ^-{e_{f_{i+1}}} & u_{i+1} \ar [l] _-{\sigma_{u_{i+1}}} 
& u_{i+2} \ar [l] _{f_{i+1}} \ar[r] ^-{f_{i+2}} \ar [d] ^-{\rho_{u_2}} & u_{i+3} \ar [r] ^-{\sigma_{u_{i+3}}} & \hat{u_{i+3}} \ar [r] ^-{=} ^-{}="8" \ar [d] _-{e_{f_{i+2}}} & \hat{u_{i+3}} \ar [d] ^-{e_{f_{i+3}}} &  & \\ \widetilde{u_i} \ar [rr] _{\sigma_{\widetilde{u_i}}} ^-{}="1" & & \hatt{u_i} \ar [r] _-{=} _-{}="7" & \hatt{u_{i+2}} & & \widetilde{u_{i+2}} \ar [ll] ^-{\sigma_{\widetilde{u_{i+2}}}} ^-{}="2" \ar [rr] _ -{\sigma_{\widetilde{u_{i+2}}}} ^-{}="3" & & \hatt{u_{i+2}} \ar [r] _{=} _-{}="9" & \hatt{u_{i+4}} & 
\ar @2 "1,2"!<2.5pt,-10pt>;"1"!<0pt,10pt> ^-{\sigma_{f_i}}
\ar @2 "1,5"!<0pt,-10pt>;"2"!<0pt,10pt> ^-{\sigma_{f_{i+1}}}
\ar @2 "1,7"!<0pt,-10pt>;"3"!<0pt,10pt> ^-{\sigma_{f_{i+2}}}
\ar @2 "6"!<-7pt,-10pt>;"7"!<-4.5pt,10pt> ^-{E_{i+1}}
\ar @2 "8"!<-3pt,-10pt>;"9"!<-3pt,10pt> ^-{E_{i+3}} }
\]

In this way, we have constructed a square $(n+1)$-cell 
\[
\xymatrix @R=2em @C=2em {
u 
  \ar[r] ^-{f} _-{}="1"
  \ar[d] _-{\rho_u}
& 
v 
  \ar[d] ^-{\rho_v}
\\
\widetilde{u}
  \ar[r] _-{\sigma_{\widetilde{u}}\sigma_{\widetilde{v}}^-} ^-{}="2" 
& 
\widetilde{v}
\ar@2 "1"!<0pt,-10pt> ; "2"!<0pt,10pt> ^-{\sigma_f}
}
\]
Similarly, we construct a square $(n+1)$-cell $\sigma_g$ as follows:
\[
\xymatrix @R=2em @C=2em{
\widetilde{u} 
  \ar[r] ^-{\sigma_{\widetilde{u}}\sigma_{\widetilde{v}}^-} _-{}="1"
& 
\widetilde{v}
\\
u'
  \ar[r] _-{g} ^-{}="2" 
  \ar[u] ^-{\rho_{u'}} 
& 
v'
 \ar[u] _-{\rho_{v'}}
\ar@{<-}@2 "1"!<0pt,-10pt> ; "2"!<0pt,10pt> ^-{\sigma_g}
}
\]
using that $\widetilde{u} =\widetilde{u'}$ and $\widetilde{v} =\widetilde{v'}$ by convergence of the $n$-polygraph $E$. 
We obtain a square $(n+1)$-cell $E_e \cdg^v (\sigma_f \cdg^h \sigma_g^-) \cdg^v E_{e'}$ filling the square~\eqref{E:SquareThmCoherence}, as in the following diagram:
\[ 
\xymatrix@R=2em @C=1.6em{
u \ar [rr] ^-{=} ^-{}="3" \ar [dd] _ -{e} & & u \ar [d] ^-{\rho_u} \ar [rrrr] ^-{f} ^-{}="1" & &  & & v \ar [d] _-{\rho_v} \ar [rr] ^-{=} ^-{}="5" & & v \ar [dd] ^-{e'} \\
& & \widetilde{u} \ar [rr] ^-{\sigma_{\widetilde{u}}} & & \hatt{u} = \hatt{v} & & \widetilde{v} \ar [ll] _-{\sigma_{\widetilde{v}}} & & \\
u' \ar [rr] _{=} _-{}="4" & & u' \ar [rrrr] _-{g} _-{}="2" \ar [u] _-{\rho_{u'}} & & & &  v' \ar [rr] _-{=} _-{}="6" \ar [u] ^-{\rho_{v'}} & & v' 
\ar @2 "1"!<0pt,-10pt>;"2,5"!<0pt,10pt> ^-{\sigma_{f}}
\ar @2 "2"!<0pt,10pt>;"2,5"!<0pt,-10pt> _-{\sigma_{g}} 
\ar @2 "3"!<0pt,-20pt>;"4"!<0pt,20pt> ^-{E_{e}}
\ar @2 "5"!<0pt,-20pt>;"6"!<0pt,20pt> ^-{E_{e'}}
} 
\]
\end{proof}

\begin{corollary}
\label{C:mainApplication}
Let $\Pr=(R,E,S)$ be a diconvergent and $E$-normalizing $n$-polygraph modulo, and $\Gamma$ a family of generating confluences of $\Pr$. Then any Squier extension $\squier{\Gamma}$ is coherent.
\end{corollary}

Note that, when the $n$-polygraph $E$ is such that $E_n$ is empty in Corollary~\ref{C:mainApplication}, we recover 
Squier's coherence theorem {\cite[Thm. 5.2]{Squier94}} for convergent $n$-polygraphs, {\cite[Prop. 4.3.4]{GuiraudMalbos09}}.

\subsubsection{Decreasing orders for $E$-normalization}
We give a method to prove that the set $\irr (E)$ is $E$-normalizing, laying on the definition of a termination order for $R$.
A \emph{decreasing order operator} for an $n$-polygraph $P$ is a family of functions 
\[
\Phi_{p,q}: P_{n-1}^{\ast}(p,q) \to \N^{m(p,q)}, 
\] 
indexed by pairs of $(n-2)$-cells $p$ and $q$ in $P_{n-2}^\ast$ satisfying the following three conditions:
\begin{enumerate}[{\bf i)}]
\item If there exists an~$n$-cell $f : u \fl v$ in $P^\ast_n(p,q)$,  then $\Phi_{p,q}(u) > \Phi_{p,q}(v) $, where $>$ is the lexicographic order on $\N^{m(p,q)}$.
 We denote by $>_{\textrm{lex}}$ the partial order on $P_{n-1}^\ast$ defined by $u >_{\textrm{lex}}  v$ if $u$ and $v$ have same source $p$ and target $q$ and $\Phi_{p,q}(u) > \Phi_{p,q}(v) $, 
 \item For all $u,v$ in $P_{n-1}^\ast$ and whisker $C$ on $P_{n-1}^\ast$,  $u >_{\textrm{lex}} v$ implies that  $C[u] >_{\textrm{lex}} C[v]$,
\item The normal forms in $P_{n-1}^\ast (p,q)$ with respect to $P$ are sent to the tuple $(0,\dots,0)$ in $\N^{m(p,q)}$.
\end{enumerate}

Note that an $n$-polygraph admitting a decreasing order operator is terminating. 
 
\subsubsection{Proving coherence modulo using a decreasing order} 
\label{SSS:ProvingCoherenceDecreasingOrder}
Suppose that the $n$-polygraph $E$ is terminating. 
A decreasing order operator $\Phi$ for $E$ is \emph{compatible with $R$} if for every $n$-cell $f:u \fl v$ in $R_n^\ast$, the inequality $\Phi_{p,q}(u) \geq \Phi_{p,q}(v)$ holds.
In that case, the $n$-polygraph modulo $(R,E,R)$ is $E$-normalizing. Indeed, if $u$ is an $E$-normal form in $R_{n-1}^\ast$, $\Phi_{p,q} (u) = (0,\dots,0)$ in $\N^{m(p,q)}$ and by compatibility with $R$, for every $n$-cell $f: u \fl v$ in $R^\ast$, we get $\Phi_{p,q}(v) = (0, \dots, 0)$ so that $v$ is also an $E$-normal form.
We can also prove that the $n$-polygraph modulo $(R,E,\ER)$ is $E$-normalizing if moreover, for every $R$-irreducible $(n-1)$-cell $u$ in $\irr (E)$, any $(n-1)$-cell $u'$ such that there is an $n$-cell $u \fl u'$ in $\tck{E}$ is also $R$-irreducible.  This is for instance the case if $R$ is \emph{left-disjoint from $E$}, that is for every $(n-1)$-cell $u$ in $s(R)$, we have $G_R(u) \cap E_{n-1} = \emptyset$ where:
\begin{enumerate}[{\bf i)}]
\item $s(R)$ is the set of $(n-1)$-sources in $R_{n-1}^\ast$ of generating $n$-cells in $R_n$,
\item for every $u$ in $R_{n-1}^\ast$, $G_R(u)$ is the set of generating $(n-1)$-cells in $R_{n-1}$ contained in $u$.
\end{enumerate}
With these conditions, we can apply Theorem~\ref{T:CoherentAcyclicity} to obtain coherent extensions of polygraphs modulo $(R,E,R)$ or $(R,E,\ER)$. 

\subsection{Coherence by commutation}
We give another method to compute a coherent extension of $\Pr$ based on normalization strategies for the $n$-polygraphs $(R_{\leq n-1},S)$ and $(R_{\leq n-1},E)$ satisfying a commutation property.

\subsubsection{Commuting normalization strategies}
Let $\sigma$ (resp. $\rho$) be a normalization strategy of the $n$-polygraph $(R_{\leq n-1},S)$ (resp. $(R_{\leq n-1},E)$). We say that $\sigma$ and $\rho$ \emph{weakly commute} if, for every $u$ in $R_{n-1}^\ast$, there exists an $n$-cell~$\eta_u$ in $S^\ast$ as in the following diagram: 
\begin{eqn}{equation}
\label{E:SquareSigmaRhoCommute}
\raisebox{0.8cm}{
\xymatrix @C=2em @R=2em{
u \ar [r] ^-{\sigma_u} \ar [d] _-{\rho_u} & \hat{u} \ar [d] ^-{\rho_{\hat{u}}} \\
\widetilde{u} \ar @{.>} [r] _{\eta_u} & \tilda{u}
}}
\end{eqn}
We then denote by $N(\sigma,\rho)$ the square extension of $\Pr$ made of squares of the form~\eqref{E:SquareSigmaRhoCommute}, for every $(n-1)$-cell $u$ in $R_{n-1}^\ast$.
We say that $\sigma$ and $\rho$ \emph{commute} if $\eta_u=\sigma_{\widetilde{u}}$ holds for every $(n-1)$-cell~$u$ in $R_{n-1}^\ast$. The definition is motivated by the fact that $\sigma$ and $\rho$ commute if, and only if, the equality $ \hatt{u} = \tilda{u} $ holds for every $(n-1)$-cell $u$ of $R_{n-1}^\ast$.

\begin{theorem} 
\label{T:CoherentAcyclicity2}
Let $\Pr=(R,E,S)$ be an $n$-polygraph modulo, and $\Gamma$ be a square extension of $\Pr$ such that $\Pr$ is $\Gamma$-diconvergent. If $\sigma$ and~$\rho$ are weakly commuting normalization strategies for $S$ and $E$ respectively, then any square extension $\squier{\Gamma} \cup N(\sigma,\rho)$ is coherent.
\end{theorem}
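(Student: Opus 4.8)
The plan is to mirror the proof of Theorem~\ref{T:CoherentAcyclicity}, filling an arbitrary square over $(\tck{E},\tck{S})$ by normalising all four of its corners to a single canonical representative, while using the commutation squares of $N(\sigma,\rho)$ to compensate for the absence of the $E$-normalisation hypothesis. Write $\Cr := \dck{(E,S,\sqext \cup \conf{E} \cup N(\sigma,\rho))}$ for the free $n$-category enriched in double groupoids generated by the data. Since $S$ is $\Gamma$-diconvergent, $\ERE$ is terminating, $S$ is $\Gamma$-confluent modulo $E$, and $E$ is convergent; in particular every $(n-1)$-cell $u$ has a unique $E$-normal form $\widetilde{u}$ and a chosen $S$-normal form $\widehat{u}$. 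The first observation I would establish is that the doubly normalised cell $\tilda{u}=\widetilde{\widehat{u}}$ depends only on the $\ERE$-equivalence class of $u$: if $u$ and $u'$ are $\ERE$-equivalent, then by confluence of $S$ modulo $E$ their $S$-normal forms $\widehat{u}$ and $\widehat{u'}$ are $E$-equivalent, whence $\widetilde{\widehat{u}}=\widetilde{\widehat{u'}}$ by convergence of $E$. I would also note that, since $\widetilde{u}$ is $E$-reduced, the left side $\rho_{\widetilde u}$ of the commutation square~(\ref{E:SquareSigmaRhoCommute}) at $\widetilde u$ is an identity, so that $\eta_{\widetilde{u}}\colon \widetilde{u}\to\tilda{\widetilde{u}}=\tilda{u}$ is an $n$-cell of $S^\ast$ furnishing a canonical $S^\ast$-path from the $E$-normal form $\widetilde u$ to the class invariant $\tilda{u}$.

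Next, for every horizontal $n$-cell $h\colon a\to b$ of $\tck{S}$ I would build a square $(n+1)$-cell $\Sigma_h$ of $\Cr$ with top $h$, vertical sides the $E$-normalisations $\rho_a,\rho_b$, and bottom the canonical $S^\ast$-cell $\eta_{\widetilde a}\star_{n-1}\eta_{\widetilde b}^{-}\colon \widetilde a\to \tilda a=\tilda b\to\widetilde b$. I would run the same construction as for the cell $\sigma_f$ in the proof of Theorem~\ref{T:CoherentAcyclicity} — decomposing $h$ into a zigzag of cells of $S^\ast$, applying $\Gamma$-confluence modulo $E$ to each piece, and gluing with cells of $\conf{E}$ under the double Noetherian induction on $\Saux$ of~\ref{SSS:InductionPrincipale}, which terminates because $\ERE$ does — with one modification. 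In Theorem~\ref{T:CoherentAcyclicity} the $E$-normalisation hypothesis guarantees that the $S$-normal forms reached along the zigzag are $E$-irreducible, hence equal, so that the bottom closes up at a single point; here they are only $E$-equivalent, so I would instead append, at each such $S$-normal form, the commutation square of $N(\sigma,\rho)$ to descend to the class invariant $\tilda{\cdot}$, and absorb the remaining $E$-spheres between the resulting $\tck{E}$-cells by generating squares of $\conf{E}$, using acyclicity of the coherent completion of $E$. Routing every bottom edge through the commutation cells $\eta_{\widetilde{u_i}}$ indexed by the $E$-normal forms of the intermediate cells makes the bottom telescope, via the inverse relations~(\ref{E:RelationsInverseSquareCell}), to $\eta_{\widetilde a}\star_{n-1}\eta_{\widetilde b}^{-}$, which depends only on $\widetilde a$ and $\widetilde b$.

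Then, given the square~(\ref{E:SquareThmCoherence}) with corners $u,v,u',v'$, vertical sides $e\colon u\to u'$ and $e'\colon v\to v'$ in $\tck{E}$ and horizontal sides $f,g$ in $\tck{S}$, I would observe that convergence of $E$ forces $\widetilde{u}=\widetilde{u'}$ and $\widetilde{v}=\widetilde{v'}$, so that $\Sigma_f$ and $\Sigma_g$ carry the same bottom $\eta_{\widetilde u}\star_{n-1}\eta_{\widetilde v}^{-}$. Filling the two vertical edges by the $\conf{E}$-squares $E_e$ and $E_{e'}$ relating $e$ and $e'$ to the $E$-normalisations $\rho$ — the $E$-spheres $(\rho_u, e\star_{n-1}\rho_{u'})$ and $(\rho_v, e'\star_{n-1}\rho_{v'})$ are coherent by acyclicity of $\conf{E}$ — the composite $E_e \cdg^v (\Sigma_f \cdg^h \Sigma_g^{-,h}) \cdg^v E_{e'}$, where $\Sigma_g^{-,h}$ is the $\cdg^h$-inverse of $\Sigma_g$, is a square $(n+1)$-cell of $\Cr$ whose boundary is the given square, exactly as in the final diagram of the proof of Theorem~\ref{T:CoherentAcyclicity}. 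This establishes acyclicity of $\sqext \cup \conf{E} \cup N(\sigma,\rho)$.

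The main obstacle is the loss of uniqueness of $S$-normal forms: unlike in Theorem~\ref{T:CoherentAcyclicity}, where the $E$-normalisation hypothesis makes the $S$-normal form reached by the confluence construction $E$-irreducible and hence unique, here the normal forms produced along a rewriting zigzag coincide only up to $E$. The role of $N(\sigma,\rho)$ is precisely to restore a canonical target, the class invariant $\tilda{\cdot}$, and the delicate combinatorial point — which I expect to be the crux of the write-up — is to organise the construction of $\Sigma_h$ so that its bottom passes through commutation cells indexed by $E$-normal forms; only then do the bottoms of $\Sigma_f$ and $\Sigma_g$ literally coincide and the final tiling close up. Verifying the boundary compatibility of the pastings (the middle four interchange law~(\ref{E:MiddleFourIdentities}) and the inverse relations~(\ref{E:RelationsInverseSquareCell})) and the termination of the iterated double induction are then routine, the latter following from termination of $\ERE$.
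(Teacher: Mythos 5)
Your proposal is correct and follows essentially the same route as the paper: decompose each horizontal cell of $\tck{S}$ into a zigzag of $S^\ast$-cells, fill each piece using $\Gamma$-confluence modulo $E$ together with a commutation square from $N(\sigma,\rho)$ and gluing cells of $\conf{E}$, vertically compose to obtain the squares over $f$ and $g$, and cap the two vertical edges with $\conf{E}$-fillers exactly as in the final pasting of Theorem~\ref{T:CoherentAcyclicity}. Your one variation — indexing the commutation cells at the $E$-normal forms, i.e.\ routing the common bottom through $\eta_{\widetilde{u}}$ rather than $\eta_u$ so that the bottoms of the two squares coincide on the nose — is a sound (and slightly more careful) treatment of the identification that the paper disposes of by simply invoking $\widetilde{u}=\widetilde{u'}$ and $\widetilde{v}=\widetilde{v'}$.
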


\begin{proof}
Denote by $\Cr$ the free $n$-category enriched in double groupoids $\dck{(E,S,\squier{\Gamma} \cup N(\sigma,\rho))}$.
For $u$ in $R_{n-1}^\ast$, we denote by $N_u$ the square $(n+1)$-cell in $\Cr$ corresponding to the square~\eqref{E:SquareSigmaRhoCommute}.
We prove that for every $n$-cell $f: u \fl v$ in $S^\ast$, there exists a square $(n+1)$-cell $\widetilde{\sigma_f}$ in $\Cr$ of the following form
\[
\xymatrix @R=2em @C=2em{
\rep{u}
  \ar [d] _-{\rho_{\rep{u}}}
&
u
  \ar[r] ^-{f} _-{}="src"
  \ar[l] _-{\sigma_u} 
&
v
  \ar[r] ^-{\sigma_v}
&
\rep{v}
  \ar [d] ^-{\rho_{\rep{v}}}
\\
\widetilde{\rep{u}}  
\ar [rrr] _-{=} ^-{}="tgt"
& & &
\widetilde{\rep{v}}
\ar@2 "src"!<0pt,-10pt>; "tgt"!<0.5pt,10pt> ^-{\widetilde{\sigma_f}} }
\]
The square $(n+1)$-cell $\widetilde{\sigma_f}$ is obtained as the following composite:
\[
\xymatrix @R=2em @C=4em {
\rep{u}
  \ar [d] _-{\rho_{\rep{u}}}
&
u
  \ar[r] ^-{f} _-{}="src"
  \ar[l] _-{\sigma_u} ^-{}="1"
  \ar[d] ^-{\rho_u}
&
v
  \ar[r] ^-{\sigma_v}
&
\rep{v}
  \ar [r] ^-{=}
&
\rep{v} 
  \ar [r] ^-{=} ^-{}="3"
  \ar@{.>} [d] _-{e_{\eta_u}}
&
\rep{v}
   \ar@{.>} [d] ^-{e_{\hat{v}}} 
   \ar [r] ^-{=} ^-{}="5"
& \rep{v} 
   \ar [d] ^-{\rho_{\rep{v}}} 
\\
\widetilde{\rep{u}}  
& \widetilde{u} \ar [l] ^-{\eta_u} _-{}="2" \ar [rr] _-{\eta_u}
&
&
\widetilde{\rep{v}}
  \ar [r] _-{\sigma_{\widetilde{\rep{v}}}}
&
\widehat{\hatt{v}} \ar [r] _-{=} _-{}="4" 
& 
\widehat{\hatt{v}}
& 
\hatt{v} 
  \ar [l] ^-{\sigma_{\hatt{v}}} _-{}="6"
\ar@2 "1"!<0pt,-10pt>; "2"!<0pt,10pt> ^-{N_u}
\ar@2 "1,4"!<-20pt,-10pt>; "2,4"!<-20pt,10pt> ^-{\eta_f}
\ar@2 "3"!<0pt,-8pt>; "4"!<0pt,7pt> |-{E_{e_{\eta_u}, e_{\hat{v}}}}
\ar@2 "5"!<0pt,-10pt>; "6"!<0pt,10pt> ^-{\gamma_v} }
\]
where the $n$-cell $e_{\eta_u}$ and the square $(n+1)$-cell $\eta_f$ (resp. the $n$-cell $e_{\hat{v}}$ and the square $(n+1)$-cell $\gamma_v$) belongs to $\Cr$ by $\Gamma$-confluence of $\Pr$, and the square $(n+1)$-cell $E_{e_{\eta_u}, e_{\hat{v}}}$ belongs to $\squier{E}$.

Now, let consider a square
\begin{eqn}{equation}
\label{E:SquareThmCoherence2}
\xymatrix@C=2em@R=2em{
u \ar [r] ^-{f} \ar [d] _-{e} & v \ar [d] ^-{e'} \\
u' \ar [r] _-{g} & v' }
\end{eqn}
in $\Cr$. By definition, the $n$-cell $f$ in $\tck{S}$ can be decomposed (in general in a non unique way) into a zigzag 
sequence
\[
f=f_0 \star_{n-1} f_1^{-} \star_{n-1} \cdots \star_{n-1} f_{2n} \star_{n-1} f_{2n+1}^{-},
\]
where the~$f_{2k} :u_{2k} \fl u_{2k+1}$ and $f_{2k+1} : u_{2k+2} \fl u_{2k+1}$, for all $0\leq k \leq n$ are $n$-cells of~$S^\ast$, with $u_0=u$ and $u_{2n+2}=v$. We define a square $(n+1)$-cell $\sigma_f$ as the following vertical composite:
$$ N_u \cdg^v \widetilde{\sigma_{f_0}} \cdg^v  \widetilde{\sigma_{f_1}} \cdg^v \dots \cdg^v  \widetilde{\sigma_{f_{2n+1}}} \cdg^v N_v $$
as depicted on the following diagram
\[
\xymatrix@R=2em@C=2em{u_0 \ar [r]^-{\sigma_{u_0}} ^-{}="1" \ar [d] _-{\rho_{u_0}} & \hat{u_0} \ar[d] ^-{\rho_{\widehat{u_0}}} & u_{0} \ar [l] _-{\sigma_{u_0}}  \ar [r] ^-{f_0} 
& 
u_{1} \ar [r] ^-{\sigma_{u_{1}}} 
& \widehat{u_{1}}  \ar [d] _-{\rho_{\hat{u_1}}}  & u_{1} \ar [l] _-{\sigma_{u_{1}}} 
& u_{2} \ar [l] _{f_{1}} \ar[r] ^-{\sigma_{u_2}} & \rep{u_2} \ar [r] ^-{\sigma_{u_2}} \ar [d] ^-{\rho_{\rep{u_2}}} & u_2   \ar[r] ^-{f_{2}} & u_{3} \ar [r] ^-{\sigma_{u_{3}}} & \hat{u_{3}} \ar [d] _-{\rho_{\rep{u_3}}} & \cdots \\ 
\widetilde{u_0} \ar[r] _-{\eta_{u_0}} _-{}="2" & \tilda{u_0} \ar [rrr] _-{=} _-{}="3" & & & \tilda{u_1} \ar [rrr] _-{=} _-{}="4" & &  & \tilda{u_2} \ar [rrr] _ -{=} ^-{}="5" &  &  & \tilda{u_{3}}  & \cdots  
\ar@2 "1"!<-5pt,-10pt>; "2"!<-5pt,10pt> ^-{N_{u_0}}
\ar@2 "1,3"!<20pt,-10pt>; "3"!<0pt,10pt> ^-{\widetilde{\sigma_{f_0}}}
\ar@2 "1,6"!<20pt,-10pt>; "4"!<0pt,10pt> ^-{\widetilde{\sigma_{f_1}}}
\ar@2 "1,9"!<20pt,-10pt>; "5"!<0pt,10pt> ^-{\widetilde{\sigma_{f_2}}} }
\]
In this way, we have constructed a square $(n+1)$-cell 
\[
\xymatrix @R=2em @C=2em{
u 
  \ar[r] ^-{f} _-{}="1"
  \ar[d] _-{\rho_u}
& 
v 
  \ar[d] ^-{\rho_v}
\\
\widetilde{u}
  \ar[r] _-{\eta_{u}\eta_{v}^-} ^-{}="2" 
& 
\widetilde{v}
\ar@2 "1"!<0pt,-10pt> ; "2"!<0pt,10pt> ^-{\sigma_f}
}
\]
Similarly, we construct a square $(n+1)$-cell $\sigma_g$ as follows:
\[
\xymatrix @R=2em @C=2em{
\widetilde{u} 
  \ar[r] ^-{\eta_{u}\eta_{v}^-} _-{}="1"
& 
\widetilde{v}
\\
u'
  \ar[r] _-{g} ^-{}="2" 
  \ar[u] ^-{\rho_{u'}} 
& 
v'
 \ar[u] _-{\rho_{v'}}
\ar@{<-}@2 "1"!<0pt,-10pt> ; "2"!<0pt,10pt> ^-{\sigma_g}
}
\]
using that $\widetilde{u} =\widetilde{u'}$ and $\widetilde{v} =\widetilde{v'}$ by convergence of the $n$-polygraph $E$. We obtain a square $(n+1)$-cell filling the square~\eqref{E:SquareThmCoherence2}, as in the proof of Theorem \ref{T:CoherentAcyclicity}.
\end{proof}

\subsubsection{Remarks}
When $\sigma$ and $\rho$ are commuting, $\Pr$ is $E$-normalizing. Indeed, the equality $\hatt{u} = \tilda{u}$ implies that the $S$-normal form $\hatt{u}$ is $E$-irreducible. Then Theorem \ref{T:CoherentAcyclicity} applies to prove that $\Sr(\Gamma)$ is acyclic.
We recover the fact that with the hypothesis of Theorem \ref{T:CoherentAcyclicity2} and the assumption that the equality $\eta_u = \sigma_{\widetilde{u}}$ holds for every $u$ in $R_{n-1}^\ast$, we do not need the square $(n+1)$-cells $N_u$ in the coherent extension of $\Pr$, using the following lemma on the square~\eqref{E:SquareSigmaRhoCommute}.

\begin{lemma}
Let $\Pr$ be a terminating $n$-polygraph modulo, and $\Gamma$ be a square extension of $\Pr$ such that $\Pr$ is $\Gamma$-confluent. Then every square in $\sqconf{\Gamma}$ of the form 
\begin{eqn}{equation}
\label{E:SquareConfluent}
\raisebox{+0.7cm}{
\xymatrix@C=2em@R=2em{
u \ar [r] ^-{f} \ar [d] _-{e} & v \ar [r] ^-{f'} & w \ar [d] ^-{e'} \\
u' \ar [r] _-{g} & v' \ar [r] _-{g'} & w' }
}
\end{eqn}
such that $w$ and $w'$ are $S$-normal forms is the boundary of a square $(n+1)$-cell in~$\sqconf{\Gamma}$.
\end{lemma}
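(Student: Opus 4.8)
The plan is to realise the square~(\ref{E:SquareConfluent}) as a confluence diagram of the branching modulo $E$ determined by its outer boundary, and then to tile it with the square $(n+1)$-cell produced by the $\Gamma$-confluence hypothesis, using crucially that the two right-hand vertices $w$ and $w'$ lie in $\irr(S)$ to collapse all auxiliary reductions. Write $F := f \star_{n-1} f' : u \fl w$ and $G := g \star_{n-1} g' : u' \fl w'$, which are $n$-cells of $S^\ast$, so that $(F,e,G)$ is a branching of $S$ modulo $E$ with source $(u,u')$ and with target pair $(w,w')$ already $S$-reduced.

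First I would apply $\Gamma$-confluence modulo $E$ of $S$ to the branching $(F,e,G)$. This produces $n$-cells $F' : w \fl x$ and $G' : w' \fl x'$ in $S^\ast$, an $n$-cell $\bar e : x \fl x'$ in $\tck E$, and a square $(n+1)$-cell $A$ in $\sqconf{\Gamma}$ whose horizontal source is $F \star_{n-1} F'$, horizontal target $G \star_{n-1} G'$, vertical source $e$ and vertical target $\bar e$. Since $w$ and $w'$ belong to $\irr(S)$, the reductions $F'$ and $G'$ issuing from them must be identities; hence $x=w$, $x'=w'$, and $A$ is a square $(n+1)$-cell with boundary $(F,G,e,\bar e)$. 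Thus $A$ already tiles the prescribed boundary of~(\ref{E:SquareConfluent}) along its horizontal source $F$, its horizontal target $G$ and its vertical source $e$, and differs from it only along the vertical target, where $\bar e : w \fl w'$ occurs in place of the prescribed $e'$.

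The remaining and principal task is to transport the vertical target of $A$ from $\bar e$ onto $e'$, both being $n$-cells of $\tck E$ between the $S$-normal forms $w$ and $w'$. The plan is to construct a comparison square $(n+1)$-cell $D$ in $\sqconf{\Gamma}$ with horizontal identities on top and bottom and with vertical sides $\bar e$ and $e'$, and then to take $A \cdg^v D$ as the desired filling: since the vertical $n$-cells and the $\cdg^h$-composition of square cells of $\sqconf{\Gamma}$ are invertible, the $\cdg^v$-composite $A \cdg^v D$ has horizontal boundaries $F$ and $G$ unchanged, vertical source $e$, and vertical target exactly $e'$. Here the normal-form hypothesis is essential: because no nontrivial $S$-reduction issues from $w$ or $w'$, the double induction on $\Saux$ driving Proposition~\ref{P:CoherentHalfNewmanModulo} and Theorem~\ref{T:ConfluenceTheorem} degenerates to a finite pasting, so that the coherent completion $\conf{E}$ of $E$ — which would be indispensable in the general construction of $\sigma_f$ in the proof of Theorem~\ref{T:CoherentAcyclicity} — is not required; termination of $\ERE$ guarantees that this pasting halts.

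The hard part will be precisely this last step, namely controlling the two parallel $\tck E$-cells $\bar e$ and $e'$ between normal forms with only the generators $\sqext = E \rtimes \Gamma \cup \peiffer{\tck E}{S^\ast}$ available, that is, without invoking coherence cells of $E$ itself. I expect the construction of $D$ to mirror the assembly of the square cell $\sigma_f$ in the proof of Theorem~\ref{T:CoherentAcyclicity}, but with every normal-form comparison there replaced by an identity cell because the targets are already $S$-reduced; concretely, $D$ should be obtained by pasting biactions ${}_{e_2}^{e_1} A_0$ of generating squares $A_0$ of $\Gamma$ along invertible vertical cells of $\tck E$, the invertibility furnished by $\sqconf{\Gamma}$ being an $(n-1)$-category enriched in double categories whose vertical $n$-cells and $\cdg^h$-composites are invertible. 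Verifying that such a pasting closes up to give exactly the boundary $(1_w, 1_{w'}, \bar e, e')$ is the delicate bookkeeping that the full proof must carry out.
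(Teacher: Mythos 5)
Your first reduction is sound, and in fact cleaner than the paper's opening move: composing $F=f\star_{n-1}f'$ and $G=g\star_{n-1}g'$, applying $\Gamma$-confluence modulo $E$ to the branching $(F,e,G)$, and observing that the confluence legs issuing from $w,w'\in\irr(S)$ must be identities does produce a square cell $A$ in $\sqconf{\Gamma}$ with boundary $(F,G,e,\bar e)$. The problem is that everything you have gained is then spent on the cell $D$ with boundary $(1_w,1_{w'},\bar e,e')$, whose existence you assert but do not establish, and this is not ``delicate bookkeeping'' --- it is the entire content of the lemma in your formulation, and there is no visible mechanism in $\sqconf{\Gamma}$ to produce it. A triple $(1_w,\bar e\star_{n-1}(e')^-,1_{w})$ is not a branching modulo $E$ (the $S$-component must be non-trivial), so $\Gamma$-confluence gives you nothing here; the only generators of $\sqconf{\Gamma}=\acy{\Gamma}$ whose horizontal faces are identities are the vertical identity squares $i_1^v(e)$, and those have \emph{equal} left and right vertical faces, as do the degenerate Peiffer squares; and comparing two distinct parallel cells of $\tck{E}$ is exactly the job of the coherence cells $\conf{E}$ of $E$, which you explicitly argue are ``not required.'' Your appeal to the proof of Theorem~\ref{T:CoherentAcyclicity} cuts against you: the cells $E_{i+1}$ used there to glue consecutive normalizations live in $\conf{E}$, not in $\sqext$. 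So the proposal, as it stands, reduces the lemma to a statement that is at least as hard and that the available generators do not obviously (and in degenerate cases do not at all) support.

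The paper takes a different, more incremental route that never isolates a pure comparison of parallel $\tck{E}$-cells. It first applies $\Gamma$-confluence to the branching $(f,e,g)$ alone, obtaining a confluence $(f_1,e'',g_1)$ and a cell tiling the left part of the square; it then applies $\Gamma$-confluence to the two induced branchings $(f',f_1)$ at $v$ and $(g_1,g')$ at $v'$, where the hypothesis that $w$ and $w'$ are $S$-normal forms forces those confluences to close by cells $e_1,e_3$ of $\tck{E}$ alone; after pasting, the residual hole is again a square of the same shape $(f_2,g_2,e_2,e_1^-\star_{n-1}e'\star_{n-1}e_3^-)$ but with source $(v_1,v'_1)$ strictly below $(u,u')$ in $\Saux$, and the argument concludes by Huet's double Noetherian induction. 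The benefit of that scheme is that the $E$-cells to be compared stay attached to genuine $S$-reductions at every stage, so the induction can keep invoking $\Gamma$-confluence; your scheme discards the $S$-reductions in one step and is left with nothing to induct on. To repair your proof you would either have to reinstate $\conf{E}$ (changing the ambient category in which the tiling lives) or restructure along the paper's recursive decomposition.
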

\begin{proof}
Consider the square~\eqref{E:SquareConfluent}. By $\Gamma$-confluence of $\Pr$, there exists a $\Gamma$-confluence of the $S$-branching $(f,e,g)$, as in the following diagram:
\[ \xymatrix@C=2em@R=2em{
u \ar [r] ^-{f} \ar [d] _-{e} & v \ar [r] ^-{f_1} & v_1 \ar [d] ^-{e''} \\
u' \ar [r] _-{g} & v' \ar [r] _-{g_1} & v'_1
\ar@2 "1,2"!<0pt,-10pt>;"2,2"!<0pt,10pt> ^-{A} }
\]
By $\Gamma$-confluence of $\Pr$ on the branchings $(f',f_1)$ and $(g_1,g')$ of $S$, there exist square $(n+1)$-cells $B$ and $B'$ as follows:
\[ \xymatrix@R=3em@C=3em {
u \ar [d] _-{\rotatebox{90}{=}} \ar [r] ^-{f} ^-{}="1" & v \ar [d] ^-{\rotatebox{90}{=}}  \ar [rr] ^-{f'} ^-{}="3" &  & w \ar [d] ^-{e_1} & \\
u \ar [d] _ -{e} \ar [r] |-{f} _-{}="2" & v \ar [r] |-{f_1} & v_1 \ar [d] ^-{e_2} \ar [r]|-{f_2} & v_2  \\
u' \ar [d] _-{\rotatebox{90}{=}} \ar [r] |-{g} ^-{}="5" & v' \ar [d] ^-{\rotatebox{90}{=}} \ar [r] |-{g_1} & v'_1 \ar [r] |-{g_2} & v'_2 \ar [d] ^-{e_3} \\
u' \ar [r] |-{g} _-{}="6" &  v' \ar [rr] |-{g'} _-{}="4" &  & w' \\
\ar@2{} "1";"2" |{i_1^h(f)} 
\ar@2{} "5";"6" |{i_1^h(g)}  
\ar@2 "2,2"!<0pt,-10pt>;"3,2"!<0pt,10pt> ^{A}
\ar@2 "3"!<0pt,-10pt>;"2,3"!<0pt,10pt> ^{B}
\ar@2 "3,3"!<0pt,-10pt>;"4"!<0pt,10pt> ^{B'}  } \]
Since $\Pr$ is terminating, one can them assume that $v_2$ and $v'_2$ are $S$-normal forms, and by $\Gamma$-confluence of $\Pr$, there exist an $n$-cell $e'$ in $\tck{E}$ with source $v_2$ and target $v'_2$. By double induction as in Section \ref{S:CoherentConfluenceModulo}, we prove that the square
\[ 
\xymatrix@R=2em @C=2em{
v_1 \ar [d] _-{e_2} \ar [r] ^-{f_2} & v_2 \ar [d] ^-{e'} \\
v'_1 \ar [r] _-{g_2} & v'_2} \]
is the boundary of a square $(n+1)$-cell in $\sqconf{\Gamma}$.
\end{proof}

\section{Globular coherence from double coherence}
\label{S:GlobularCoherenceFromDoubleCoherence}

In this section, we construct a globular coherent presentation for an $n$-category from a double coherent presentation generated by a polygraph modulo. We apply this construction in the situation of commutative monoids, pivotal monoidal categories, and groups.

\subsection{Globular coherence by convergence modulo}
\label{SS:GlobularCoherentConvergenceModulo}

Let $\Pr=(R,E,S)$ be an $n$-polygraph modulo and $\Gamma$ be a square extension of $\Pr$.
Consider the double $(n+1,n-1)$-polygraph $(E,S,\squier{\Gamma})$, where $\squier{\Gamma}$ is the square extension defined in~\eqref{SSS:SquierExtensionModulo}. Let us denote by $((P_i)_{0\leq i \leq n+1}, (Q_i)_{1\leq i \leq n+1})$ the associated $(n+1,n-1)$-dipolygraph $V(E,S,\squier{\Gamma})$, where the functor~$V$ is defined in \eqref{E:QuotientFunctorV}. 
The cellular extension $S$ being defined modulo $E$, we adapt the construction of the $n$-functor $F$ in the definition of the quotient functor $V$ \eqref{SSS:CoherenceFromDoubleCoherence}-{\bf vi)} as follows. 
\begin{enumerate}[{\bf a)}]
\item $F$ is the identity functor on the underlying $(n-2)$-category $R_{n-2}^\ast$, that coincides with $E_{n-2}^\ast$,
\item $F$ sends an $(n-1)$-cell $u$ in $R_{n-1}^\ast$ to its equivalence class $[u]^v$ modulo $E_{n}$,
\item $F$ sends an $n$-cell $f:u \fl v$ in $\tck{S}$ to the $n$-cell $[f]^v : [u]^v \fl [v]^v$ in $(R_{n-1}^\ast)_{E_n}(P_n)$ as defined in \eqref{SSS:CoherenceFromDoubleCoherence}, {\bf iv)-c)} and by setting 
\[
[f]^v = [f_1]^v \star_{n-1} [f_2]^v \star_{n-1} \ldots \star_{n-1} [f_k]^v,
\] 
for $f = e_1\star_{n-1} f_1 \star_{n-1} e_2 \star_{n-1} f_2 \star_{n-1} \ldots \star_{n-1} e_k \star_{n-1} f_k$ in $\tck{S}$, where the $n$-cells $e_i$ and $f_i$ belong to $\tck{E}$ and $\tck{R}$ respectively and may be identity cells. 
\end{enumerate}

As a consequence of Proposition \ref{P:QuotientAcyclic} and Corollary~\ref{C:mainApplication}, we get the following result:

\begin{proposition}
\label{P:QuotientCoherentPresentation}
Let $\Pr=(R,E,S)$ be an $E$-normalizing diconvergent $n$-polygraph modulo, and $\Gamma$ a family of generating confluences of $\Pr$. Then, the $(n+1,n-1)$-dipolygraph $V(E,S,\squier{\Gamma})$ is a globular coherent presentation of the $(n-1)$-category $\cl{\Pr}$. 
\end{proposition}

\begin{theorem}
\label{T:QuotientCoherentPresentation}
Let $\Pr=(R,E,S)$ be an $E$-normalizing diconvergent $n$-polygraph modulo, and $\Gamma$ a family of generating confluences of $\Pr$. Then, the cellular extension 
\[
[\Gamma]^v := \{ [A]^v\;|\; A\in \Gamma\;\}
\]
extends the $n$-category $(R_{n-1}^\ast)_{E_n}(R_n)$ into a globular coherent presentation of the $(n-1)$-category  $\cl{\Pr}$.
\end{theorem}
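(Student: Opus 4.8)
The plan is to assemble $V(E,S,\sqext \cup \conf{E})$ and translate the acyclicity statement of Corollary~\ref{C:mainApplication} through the quotient functor $V$. First I would observe that the hypotheses of Theorem~\ref{T:QuotientCoherentPresentation} are exactly those of Corollary~\ref{C:mainApplication}: since $(R,E,S)$ is diconvergent, $E$ is convergent and $S$ is convergent modulo $E$, and $\irr(E)$ is $E$-normalizing with respect to $S$, so for any coherent completion $\Gamma$ of $S$ modulo $E$ and any coherent completion $\conf{E}$ of $E$, the square extension $\sqext \cup \conf{E}$ of the pair $(\tck{E},\tck{S})$ is acyclic. This gives me acyclicity of the square extension on the \emph{double} side, and the whole task becomes a matter of pushing this through the quotient construction.

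Next I would apply Proposition~\ref{P:QuotientCoherentPresentation}, which is the direct specialization of Proposition~\ref{P:QuotientAcyclic} to the present modular setting with the adapted $n$-functor $F$ described at the start of Subsection~\ref{SS:GlobularCoherentConvergenceModulo}. It states that under these same hypotheses the $(n+1,n-1)$-dipolygraph $V(E,S,\sqext \cup \conf{E})$ is a globular coherent presentation of $(R_{n-1}^\ast)_E$. Concretely, the quotient functor $V$ sends the square extension $\sqext \cup \conf{E}$ to the globular cellular extension $P_{n+2}$ on the quotient category, and by Proposition~\ref{P:QuotientAcyclic} acyclicity of the square extension yields acyclicity of this globular extension. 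The remaining step is to identify the resulting globular data explicitly, i.e.\ to check that $V$ produces precisely the $n$-category $(R_{n-1}^\ast)_{E_n}(R_n)$ extended by $[\Gamma]^v = \{[A]^v \mid A\in\Gamma\}$.

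I would then carry out this identification using the definition of $V$ in \ref{SSS:CoherenceFromDoubleCoherence} together with the notation $[A]^v$ from \ref{SSS:NotationCrochetV}. Under $V$, the vertical $n$-cells of $E$ become the globular relations $Q_n$ generating the congruence defining $(R_{n-1}^\ast)_E$, the horizontal $n$-cells of $S$ project via $\pi$ to the generating $n$-cells $[f]^v$ (computed through the decomposition $f = e_1\star_{n-1} f_1 \star_{n-1} \cdots \star_{n-1} e_k \star_{n-1} f_k$ as in the adapted clause {\bf c)}), and each generating square $(n+2)$-cell $A$ of $\Gamma$ is sent to $[A]^v$. The compatibility relations $[A]^v \star_n [A']^v = [A\cdg^v A']^v$ and $[A]^v \star_{n+1} [A']^v = [A\cdg^h A']^v$ recorded in \ref{SSS:NotationCrochetV} guarantee that $[\Gamma]^v$ is a well-defined cellular extension of $(R_{n-1}^\ast)_{E_n}(R_n)$ and that the boundary of $[A]^v$ is the projection of the boundary of $A$. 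Thus acyclicity of $\Gamma\cup\conf{E}$ on the double side transfers to acyclicity of $[\Gamma]^v$ on the globular side, exactly as in the proof of Proposition~\ref{P:QuotientAcyclic}: any globular $(n+1)$-sphere in $(R_{n-1}^\ast)_{E_n}(R_n)$ lifts to a square over $(\tck{E},\tck{S})$, which is filled by some $A$, and $[A]^v$ fills the original sphere.

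The main obstacle I anticipate is not the acyclicity transfer itself, which is essentially Proposition~\ref{P:QuotientAcyclic}, but rather verifying that the $\conf{E}$ part of the square extension becomes \emph{invisible} after quotienting, so that one is left with $[\Gamma]^v$ alone rather than $[\Gamma]^v \cup [\conf{E}]^v$. This is where convergence of $E$ is crucial: the square $(n+2)$-cells $E_{e,e'}$ of $\conf{E}$ have horizontal boundaries that are identities ($=$) and vertical boundaries built from $E$-cells, so their images under $V$ collapse — the source and target $(n-1)$-cells become equal modulo $E_n$, making $[E_{e,e'}]^v$ a degenerate cell that contributes nothing to the globular extension $P_{n+2}$ beyond what is forced by the relations $Q_n$. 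I would make this precise by checking that for a cell of $\conf{E}$ the class $[-]^v$ identifies the two horizontal arrows, so it does not add a generator to $[\Gamma]^v$; correspondingly the lift of an arbitrary sphere need only invoke cells of $\Gamma$ once the $E$-normalization of endpoints (via $\irr(E)$ being $E$-normalizing with respect to $S$) is used, exactly as the zigzag argument in the proof of Theorem~\ref{T:CoherentAcyclicity} arranges. With that verification in hand, the conclusion that $[\Gamma]^v$ extends $(R_{n-1}^\ast)_{E_n}(R_n)$ into a globular coherent presentation of $(R_{n-1}^\ast)_E$ follows.
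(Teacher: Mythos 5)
Your proposal follows essentially the same route as the paper: invoke Corollary~\ref{C:mainApplication} for acyclicity of $\sqext\cup\conf{E}$, pass through the quotient functor $V$ via Proposition~\ref{P:QuotientCoherentPresentation}, and check that the image of the square extension collapses to $[\Gamma]^v$. The one point you verify only partially is the collapse itself: besides the cells of $\conf{E}$ becoming degenerate, one must also check that the Peiffer cells of $\peiffer{\tck{E}}{S^\ast}$ are sent to identity $(n+1)$-cells and that two square cells in the same orbit of the biaction of $\tck{E}$ (i.e.\ the cells ${}^{e_1}_{e_2}A$ of $E\rtimes\Gamma$ versus $A$ itself) have the same image under $V$ — both facts follow immediately from the definition of $[-]^v$, which discards the $E$-factors in any decomposition, and the paper records all three collapse checks explicitly.
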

\begin{proof}
The quotient functor $V$ sends the cellular extension $\Sr(\Gamma)$ to $[\Gamma]^v$. Indeed, any square $(n+1)$-cell $E_{e,e'}$ in $\squier{E}$ yields an identity $(n+1)$-cell in the $(n+1)$-category $(R_{n-1}^\ast)_{E_n}(P_n)(P_{n+1})$:
\[ 
\raisebox{1.2cm}{
\xymatrix@R=2em @C=2em{
u 
  \ar [d] _-{e} \ar [r] ^-{=} ^-{}="1"
& u 
  \ar [d] ^-{e'} \\
v 
  \ar@{.>} [d] _-{e_1} 
& v' 
  \ar@{.>} [d] ^-{e'_1} \\
w \ar@{.>} [r] _-{=} _-{}="2" & w 
\ar@2 "1"!<0pt,-15pt>;"2"!<0pt,15pt> |-{E_{e,e'}} }
}
\qquad
\rightsquigarrow
\qquad
\xymatrix{ [u]^v = [w]^v 
     \ar @/^5ex/ [rr] ^{[i_0^h(u)]^v} _-{}="1"
	\ar @/_5ex/ [rr] _{[i_0^h(w)]^v}  ^-{}="2" 
	 & & 
	 [u]^v = [w]^v
\ar@2 "1"!<0pt,-10pt>;"2"!<0pt,10pt> ^-{}
} 
\] 
Similarly, any $(n+1)$-cell in $\peiffer{\tck{E}}{S^\ast}$ yields an identity $(n+1)$-cell in the $(n+1)$-category $(R_{n-1}^\ast)_{E_n}(P_n)(P_{n+1})$. 
Finally, two square $(n+1)$-cells in the same orbit for the biaction of the $(n,n-1)$-category $\tck{E}$ on $\Sq(\tck{E},S^\ast)$ are sent on the same globular $(n+1)$-cell in $(R_{n-1}^\ast)_{E_n}(P_n)(P_{n+1})$. 
\end{proof}

\subsubsection{Gobular coherent completion procedure for $\ER$} 
\label{SSS:KnuthBendixSquierModulo}
Given a diconvergent $n$-polygraph modulo~$\Pr=(R,E,S)$, Corollary \ref{C:mainApplication} constructs a coherent extension of $\Pr$. In many applications, this result is applied with $S=\ER$ and in situations where $\ER$ is not confluent modulo $E$. When $\ER$ is equipped with a termination order compatible with $R$ modulo $E$, we apply procedure~\eqref{SSS:CompletionProcedure} to complete $R$ into an $n$-polygraph $\check{R}$ such that the $n$-polygraph modulo $(\check{R},E,{}_{E}\check{R})$ is confluent. Moreover, following Corollary~\ref{T:QuotientCoherentPresentation} the only square cells that we have to consider in the construction of the globular coherent presentation through the quotient functor $V$ are the square cells $A_{f,g}$ and $B_{f,e}$ of~\eqref{E:CellsCoherentCompletion}. When $S=\ER$, we do not have to consider square cells of the form $B_{f,e}$. Indeed, the critical $S$-branchings $(f,e)$ where $f$ is an $n$-cell in $\So$ and $e$ is an $n$-cell in $\Eo$ are trivially confluent from~\eqref{SSS:RemarquesERconfluentModuloE}, and the square $(n+1)$-cell $B_{f,e}$ obtained by the following choice of a confluence:
\begin{eqn}{equation}
\label{E:InducedBranchings}
\xymatrix @R=2em @C=2em{
u
  \ar [rr] ^-{f}
  \ar [d] _-{e}
& 
  \ar @2 []!<0pt,-8pt>; [d]!<0pt,+8pt> ^-{B_{f,e}}
&
v
  \ar[d] ^-{\rotatebox{90}{=}}
\\
u'
  \ar [rr] _-{e^- \cdot f}
&
&
v
}
\end{eqn} yields an identity $(n+1)$-cell 
\[ \xymatrix@C=5em{ [u]^v = [u']^v 
     \ar @/^4ex/ [r] ^{[f]^v } _-{}="1"
	\ar @/_4ex/ [r] _{[e^- \cdot f]^v = [f]^v}  ^-{}="2" 
	 &
	 [v]^v
\ar@2 "1"!<0pt,-10pt>;"2"!<0pt,10pt> ^-{i_{[f]^v}}
} 
\]
in the $(n+1)$-category $((R_{n-1}^\ast)_{E_n}(P_n))(P_{n+1})$.
As a consequence, we only need to consider the square $(n+1)$-cells 
\[ 
\xymatrix @R=2em @C=2em{
u
  \ar [r] ^-{f}
  \ar [d] _-{\rotatebox{90}{=}}
&
u'
  \ar [r] ^-{f'}
  \ar@2 []!<0pt,-8pt>; [d]!<0pt,+8pt> ^-{A_{f,g}}
&
w
  \ar[d] ^-{e'}
\\
u
  \ar [r] _-{g}
&
v
  \ar [r] _-{g'}
&
w'
} \] for a choice of a family of generating confluences of strict $S$-branchings $(f,g)$, where $f$ is an $n$-cell of~$\ER^{\ast (1)}$ and $g$ is an $n$-cell of $\Ro$. 
Applying the quotient functor $V$ of \eqref{E:QuotientFunctorV} on the set of square $(n+1)$-cells $A_{f,g}$, following Theorem~\ref{P:QuotientCoherentPresentation}, we obtain an acyclic extension of the $n$-category $(R_{n-1}^\ast)_{E_n}(P_n)$ given by 
\[
\{\;[A_{f,g}]^v \; | \; \text{$(f,g)$ is a critical branching of $S$ modulo $E$}\;\},
\]
where bracket notation $[-]^v$ is defined in~\eqref{SSS:NotationCrochetV}.

\subsection{Commutative monoids}
\label{SS:CommutativeMonoids}
Consider a presentation $(X,R)$ of a commutative monoid $M$, and the associated $(2,1)$-dipolygraph $((P_0,P_1,P_2),(Q_1,Q_2))$ as in \eqref{SSS:CoherentPresentationCommutativeMonoid}. Using completion procedure~\eqref{SSS:KnuthBendixSquierModulo}, we compute a coherent presentation of $M$ by considering the $2$-polygraph modulo $(R,E,\ERE)$, where $E$ and $R$ are the $2$-polygraphs $(P_0,P_1,Q_2)$ and $(P_0,(P_1^\ast)_{Q_2},P_2)$ respectively. 
The $2$-cells of the $2$-polygraph $E$ are oriented with respect to a deglex order induced by a total order on $X$ making it terminating. It is also confluent, by confluence of each of its critical branchings having the following form:
\[ 
\xymatrix@C=3em@R=1em{
     & x_i x_k x_j 
       \ar@2 [r] ^-{\alpha_{i,k} x_j} & x_k x_i x_j \ar@2 [dr] ^-{x_k \alpha_{i,j}} & \\
      x_i x_j x_k 
        \ar@2 [ur] ^-{x_i \alpha_{j,k}}
        \ar@2 [dr] _-{\alpha_{i,j} x_k} & & &  x_k x_j x_i \\
       & x_j x_i x_k \ar@2 [r] _-{x_j \alpha_{i,k}} & x_j x_k x_i \ar@2 [ur] _-{\alpha_{j,k} x_i} &
 } \]
for all $x_i, x_j, x_k$ in $X$ such that $x_i > x_j > x_k$, and the $2$-cells $\alpha_{-,-}$ defined in~\eqref{SSS:CoherentPresentationCommutativeMonoid}. 

\subsubsection{Example}
Suppose that $X=\{ x_1 , x_2 , x_3, x_4 \}$ and $R_2 = \{ x_1 x_3 \overset{\beta}{\dfl} x_2 x_4, \; x_1 x_2 \overset{\gamma}{\dfl} x_1\}$.
There is a critical branching of $\ERE$ modulo $E$ given by
\begin{eqn}{equation}
\label{E:branchingx1x2x3}
\raisebox{0.7cm}{
\xymatrix@R=2em @C=2em{x_1 x_2 x_3 
\ar@2 [rr] ^-{\alpha_{2,3}^- \cdot \beta}
\ar@2 [d] _-{\rotatebox{90}{=}} &   & x_2 x_4  x_2 \\
x_1 x_2 x_3 \ar@2 [r] _-{\gamma} & x_1 x_3 \ar@2 [r]_-{\beta} & x_2 x_4 }
}
\end{eqn}
where $\alpha_{2,3}^- \cdot \beta$ is the rewriting step of $\ERE$ defined by $\xymatrix{x_1 x_2 x_3 \ar@2 [r] ^-{\alpha_{2,3}^-} & x_1 x_3 x_2 \ar@2 [r] ^-{\beta x_2} & x_2 x_4 x_2}$.  
As any permutation of the $x_i$ in $x_2 x_4 x_2$ and $x_2 x_4$ is $R_2$-irreducible, the $1$-cells $x_2 x_4 x_2$ and $x_2 x_4$ are $\ERE$-normal forms. Hence, the branching~\eqref{E:branchingx1x2x3} is not confluent modulo $E$. We add the following $2$-cell 
\[ 
\delta: x_2 x_2 x_4 \dfl x_2 x_4,
\]
and we set $R:=R\cup \{\delta\}$. 
The degree lexicographic order induced by $x_1 > x_2 > x_3 > x_4$ is a termination order compatible with $R_2$ modulo $E$, so that $\ERE$ is terminating and $\irr (E)$ is trivially $E$-normalizing with respect to $\ERE$. 
Moreover, the $2$-polygraph modulo $\ERE$ is confluent modulo $E$. Indeed, all its critical branchings modulo, depicted in~\eqref{E:criticalBranchingExample1} and~\eqref{E:criticalBranchingExample2}, are confluent modulo:
\begin{eqn}{equation}
\label{E:criticalBranchingExample1}
\raisebox{0.7cm}{
\xymatrix@C=3em@R=3em{
x_1 x_2 x_3 
    \ar@2 [r] ^-{\alpha_{2,3}^- \cdot \beta}
    \ar@2 [d] _-{\rotatebox{90}{=}}  
   & 
   x_2 x_4  x_2 
   \ar@2 [r] ^-{\alpha_{2,4}^- \cdot \delta} 
   & 
   x_2 x_4 
     \ar@2 [d] ^-{\rotatebox{90}{=}} \\
x_1 x_2 x_3 
     \ar@2 [r] _-{\gamma} 
     &
      x_1 x_3 \ar@2 [r] _-{\beta} 
      & 
      x_2 x_4 
\ar@3 "1,2"!<0pt,-8pt>;"2,2"!<0pt,8pt> ^-{A}  
}}
\qquad
\raisebox{0.7cm}{
\xymatrix@C=3em@R=3em{
x_2 x_2 x_4 x_1 
    \ar@2 [r] ^-{\alpha_{2,4} \cdot \gamma}
    \ar@2 [d] _-{\rotatebox{90}{=}}  
   & 
   x_2 x_4  x_1 
   \ar@2 [r] ^-{\alpha_{1,4}^- \alpha_{1,2}^- \cdot \gamma} 
   & 
   x_2 x_4 
     \ar@2 [d] ^-{\rotatebox{90}{=}} \\
x_2 x_2 x_4 x_1 
     \ar@2 [r] _-{\delta x_1} 
     &
      x_2 x_4 x_1 
     \ar@2 [r] _-{\alpha_{1,4}^- \alpha_{1,2}^- \cdot \gamma} 
      & 
      x_2 x_4 
\ar@3 "1,2"!<0pt,-8pt>;"2,2"!<0pt,8pt> ^-{B}  
}}
\end{eqn}
\begin{eqn}{equation}
\label{E:criticalBranchingExample2}
\raisebox{0.7cm}{
\xymatrix@C=3em@R=3em{
x_2 x_4 x_2 x_4 x_2
    \ar@2 [r] ^-{\alpha_{2,4}^- \cdot \delta}
    \ar@2 [d] _-{\rotatebox{90}{=}}  
   & 
   x_2 x_4  x_4 x_2 
   \ar@2 [r] ^-{(\alpha_{2,4}^-)^2 \cdot \delta} 
   & 
   x_2 x_4 x_4
     \ar@2 [d] ^-{\rotatebox{90}{=}} \\
x_2 x_4 x_2 x_4 x_2 
     \ar@2 [r] _-{\alpha_{2,4}^- \cdot \delta} 
     &
      x_2 x_4 x_2 x_4 
     \ar@2 [r] _-{\alpha_{2,4}^- \cdot \delta} 
      & 
      x_2 x_4 x_4
\ar@3 "1,2"!<0pt,-8pt>;"2,2"!<0pt,8pt> ^-{C}  
}}
\end{eqn}
Following procedure~\eqref{SSS:KnuthBendixSquierModulo}, we show that an acyclic extension of the commutative monoid  presented by~$(X,R_2)$ can be computed from the the square extension $\{A,B,C\}$ of $(\tck{E},\tck{\ERE})$. This acyclic extension is made of the following $3$-cells:
\[ 
\scalebox{0.9}{
\xymatrix{ [x_1 x_2 x_3] 
     \ar@2 @/^5ex/ [rr] ^{[\beta] \star_1 [\delta]} _-{}="1"
	\ar@2 @/_5ex/ [rr] _{[\gamma] \star_1 [\beta]}  ^-{}="2" 
	 & & 
	 [x_2 x_4]
\ar@3 "1"!<0pt,-10pt>;"2"!<0pt,10pt> ^-{[A]}
}}
\qquad 
\scalebox{0.9}{
\xymatrix{ [x_1 x_2 x_2 x_4] 
     \ar@2 @/^5ex/ [rr] ^{[\delta] \star_1 [\gamma]} _-{}="1"
	\ar@2 @/_5ex/ [rr] _{[\delta] \star_1 [\gamma] }  ^-{}="2" 
	 & & 
	 [x_2 x_4]
\ar@3 "1"!<0pt,-10pt>;"2"!<0pt,10pt> ^-{[B]}
}}
\qquad
\scalebox{0.9}{
\xymatrix{ [x_2 x_2 x_2 x_4 x_4] 
     \ar@2 @/^5ex/ [rr] ^{[\delta] \star_1 [\delta]} _-{}="1"
	\ar@2 @/_5ex/ [rr] _{[\delta] \star_1 [\delta]}  ^-{}="2" 
	 & & 
	 [x_2 x_4 x_4]
\ar@3 "1"!<0pt,-10pt>;"2"!<0pt,10pt> ^-{[C]}
}}
\]

Note that if we take the commutation $2$-cells as rewriting rules, the Knuth-Bendix completion is infinite, requiring to add a $2$-cell $\varepsilon_n  : x_4 x_3^n x_2 x_2 \dfl x_4 x_3^n x_2$ for each $n \geq 0$. This yields an acyclic extension made of an infinite set of $3$-cells
\[ 
\xymatrix@R=0.8em@C=2.5em{
 & x_4 x_3^{n+1} x_2 x_2 
     \ar@2 @/^2ex/ [dr] ^-{\varepsilon_{n+1}} 
 &  \\
  x_4 x_3^n x_2 x_2 x_3 
     \ar@2 @/^2ex/ [ur] ^-{\alpha_{2,3}^2} 
     \ar@2 @/_2ex/ [dr] _-{\varepsilon_n x_3} & & x_4 x_3^{n+1} x_2 \\
  & x_4 x_3^n x_2 x_3 
     \ar@2 @/_2ex/ [ur] _-{\alpha_{2,3}} & 
\ar@3 "1,2"!<0pt,-20pt>;"3,2"!<0pt,20pt> ^-{\;D_n}
}
\]

\subsection{Pivotal monoidal categories}
\label{SS:ExampleDiagrammatic}

We present an application of Theorem~\ref{T:CoherentAcyclicity} in the context of pivotal monoidal categories, seen as a $2$-categories with only one $0$-cell, and thus presented by $3$-polygraphs. The pivotal structure implies that two isotopic string diagrams correspond to the same $2$-cell of the $2$-category. 
However, the isotopy rules produce many critical branchings with primary rules of a given presentation.
Using the structure of polygraphs modulo, we show how to manage primary rules with respect to isotopy rules in the computation of a coherent presentation of the given monoidal category. 
Let us illustrate the method on an example of pivotal  monoidal category admitting one generating $1$-cell and its bidual, and whose $2$-cells are subject to some relations including the relations of the symmetric category, together with one more relation \eqref{E:ThreeCellToyEx}. We also discuss some perspectives to extend our results to presentations of monoidal categories including a relation of the form \eqref{E:relationsPermutations}, such as Khovanov-Lauda's $2$-category~\cite{KhovanovLauda08}, which categorifies quantum groups associated with symmetrizable Kac-Moody algebras, or Heisenberg categories \cite{Khovanov2010,Brundan2017}, presented by rewriting systems that are terminating up to rewriting cycles.

\subsubsection{Example}
\label{SSS:ToyExample}
Let $P$ be the $3$-polygraph defined by the following data:
\begin{enumerate}[{\bf i)}]
\item only one generating $0$-cell,
\item two generating $1$-cells $\curlywedge$ and $\curlyvee$,
\item eight generating $2$-cells pictured by 
\begin{eqn}{equation}
\label{E:TwoCellsDotCrossings}
\udott{}\;, \qquad \crossup{}{}\;, \qquad \ddott{}\;, \qquad \crossdn{}{}\;,
\end{eqn}
\begin{eqn}{equation}
\label{E:TwoCellsCupCap}
\capl{}\;, \qquad  \cupl{}\;, \qquad \capr{}\;, \qquad \cupr{}\;,
\end{eqn}
\item the generating $3$-cells are given by the following families:
\begin{enumerate}[{\bf a)}]
\item the three families of generating $3$-cells of the $3$-polygraph of pearls from~\cite{GuiraudMalbos09}:
\begin{eqn}{equation} 
\label{E:Isotopy1}
\mathord{
\begin{tikzpicture}[baseline = 0, scale=0.911]
  \draw[->,thick,black] (0.3,0) to (0.3,.4);
	\draw[-,thick,black] (0.3,0) to[out=-90, in=0] (0.1,-0.4);
	\draw[-,thick,black] (0.1,-0.4) to[out = 180, in = -90] (-0.1,0);
	\draw[-,thick,black] (-0.1,0) to[out=90, in=0] (-0.3,0.4);
	\draw[-,thick,black] (-0.3,0.4) to[out = 180, in =90] (-0.5,0);
  \draw[-,thick,black] (-0.5,0) to (-0.5,-.4);
   \node at (-0.5,-.5) {};
\end{tikzpicture}
}
 \Rrightarrow
\mathord{\begin{tikzpicture}[baseline=0, , scale=0.911]
  \draw[->,thick,black] (0,-0.4) to (0,.4);
   \node at (0,-.5) {};
\end{tikzpicture}
},\qquad
\mathord{
\begin{tikzpicture}[baseline = 0, , scale=0.911]
  \draw[->,thick,black] (0.3,0) to (0.3,-.4);
	\draw[-,thick,black] (0.3,0) to[out=90, in=0] (0.1,0.4);
	\draw[-,thick,black] (0.1,0.4) to[out = 180, in = 90] (-0.1,0);
	\draw[-,thick,black] (-0.1,0) to[out=-90, in=0] (-0.3,-0.4);
	\draw[-,thick,black] (-0.3,-0.4) to[out = 180, in =-90] (-0.5,0);
  \draw[-,thick,black] (-0.5,0) to (-0.5,.4);
   \node at (-0.5,.5) {};
\end{tikzpicture}
}
\Rrightarrow
\mathord{\begin{tikzpicture}[baseline=0, , scale=0.911]
  \draw[<-,thick,black] (0,-0.4) to (0,.4);
   \node at (0,.5) {};
\end{tikzpicture}
}, \qquad 
\mathord{
\begin{tikzpicture}[baseline = 0]
  \draw[-,thick,black] (0.3,0) to (0.3,-.4);
	\draw[-,thick,black] (0.3,0) to[out=90, in=0] (0.1,0.4);
	\draw[-,thick,black] (0.1,0.4) to[out = 180, in = 90] (-0.1,0);
	\draw[-,thick,black] (-0.1,0) to[out=-90, in=0] (-0.3,-0.4);
	\draw[-,thick,black] (-0.3,-0.4) to[out = 180, in =-90] (-0.5,0);
  \draw[->,thick,black] (-0.5,0) to (-0.5,.4);
   \node at (0.3,-.5) {};
\end{tikzpicture}
}
\Rrightarrow
\mathord{\begin{tikzpicture}[baseline=0, , scale=0.911]
  \draw[->,thick,black] (0,-0.4) to (0,.4);
   \node at (0,-.5) {};
\end{tikzpicture}
}, \qquad
\mathord{
\begin{tikzpicture}[baseline = 0, , scale=0.911]
  \draw[-,thick,black] (0.3,0) to (0.3,.4);
	\draw[-,thick,black] (0.3,0) to[out=-90, in=0] (0.1,-0.4);
	\draw[-,thick,black] (0.1,-0.4) to[out = 180, in = -90] (-0.1,0);
	\draw[-,thick,black] (-0.1,0) to[out=90, in=0] (-0.3,0.4);
	\draw[-,thick,black] (-0.3,0.4) to[out = 180, in =90] (-0.5,0);
  \draw[->,thick,black] (-0.5,0) to (-0.5,-.4);
   \node at (0.3,.5) {};
\end{tikzpicture}
}
\Rrightarrow
\mathord{\begin{tikzpicture}[baseline=0, , scale=0.911]
  \draw[<-,thick,black] (0,-0.4) to (0,.4);
   \node at (0,.5) {};
\end{tikzpicture}
},
\end{eqn}
\begin{eqn}{equation}
\label{E:Isotopy2}
\suld{} \Rrightarrow \dpd{}, \qquad \surdd{} \Rrightarrow \upd{}, \qquad \sdrd{} \Rrightarrow \dpd{}, \qquad \sdld{} \Rrightarrow \upd{},
\end{eqn}
\begin{eqn}{equation}
\label{E:Isotopy3}
\cuprdl{}{}  \Rrightarrow \cuprdr{}{}, \qquad \caprdl{}{} \Rrightarrow \caprdr{}{}, \qquad
\cupldl{}{}  \Rrightarrow \cupldr{}{}, \qquad \capldl{}{} \Rrightarrow \capldr{}{},
\end{eqn}
\item the generating $3$-cells of permutations for both upward and downward orientations of strands:
\begin{eqn}{equation} 
\label{E:relationsPermutations}
\hspace{-0.5cm}
\raisebox{-0.9cm}{
\doublecroisementhaut{}{} \raisebox{7mm}{$\overset{\alpha_+}{\Rrightarrow}$}\doubleidentitehaut{}{}\raisebox{0.9cm}{,} 
 \quad 
 \doublecroisementbas{}{} \raisebox{7mm}{$\overset{\alpha_-}{\Rrightarrow}$} \doubleidentitebas{}{} \raisebox{0.9cm}{,}
 \quad
 \ybgauchehaut{}{}{} \raisebox{8mm}{$\overset{\beta_+}{\Rrightarrow}$} \ybdroithaut{}{}{}
 \raisebox{0.9cm}{,}
 \quad
 \ybgauchebas{}{}{} \raisebox{8mm}{$\overset{\beta_-}{\Rrightarrow}$} \ybdroitbas{}{}{}\raisebox{0.9cm}{,}
 }
\end{eqn}
\item a generating $3$-cell 
\begin{eqn}{equation} 
\label{E:ThreeCellToyEx} 
 \raisebox{-8mm}{$\begin{tikzpicture}[scale=0.45]
\draw[-,thick,black] (0,0) to (1,0.9);
\draw[-,thick,black] (1,0) to (0,0.9);
\draw[-,thick,black] (1,1.1) to (0,2);
\draw[-,thick,black] (0,1.1) to (1,2);
\draw[-,thick,black] (0,0.9) to (0,1.1);
\draw[-,thick,black] (1,0.9) to (1,1.1);
\draw[-,thick] (0,2) to[out=90, in=0] (-0.4,2.4);
\draw[->,thick] (-0.4,2.4) to[out = 180, in = 90] (-0.8,2);
\draw[-,thick,black] (0,0) to[out=-90, in=0] (-0.4,-0.4);
	\draw[-,thick,black] (-0.4,-0.4) to[out = 180, in = -90] (-0.8,0);
	\draw[-,thick,black] (-0.8,0) to (-0.8,2);
	\draw[->,thick,black] (1,2) to (1,2.4);
	\draw[-,thick,black] (1,0) to (1,-0.4);
\end{tikzpicture}$} \quad \raisebox{-3mm}{$\overset{\gamma}{\Rrightarrow}$} \quad \raisebox{-4mm}{$\begin{tikzpicture}[baseline = 0, scale = 1.2]
  \draw[-,thick,black] (0,0.4) to[out=180,in=90] (-.2,0.2);
  \draw[->,thick,black] (0.2,0.2) to[out=90,in=0] (0,.4);
 \draw[-,thick,black] (-.2,0.2) to[out=-90,in=180] (0,0);
  \draw[-,thick,black] (0,0) to[out=0,in=-90] (0.2,0.2);
 \end{tikzpicture}$} \: \: \raisebox{-3mm}{$\mathord{
\begin{tikzpicture}[baseline = 0,scale=0.8]
  \draw[->,thick,black] (0,-0.4) to (0,0.6);
\end{tikzpicture}}$} \raisebox{-0.25cm}{\quad.}
\end{eqn}
\end{enumerate}
\end{enumerate}

Note that the relations~(\ref{E:Isotopy1} -- \ref{E:Isotopy3}) state that the generating $1$-cells~$\curlyvee$ and $\curlywedge$ are biadjoints in  the $2$-category $\cl{P}$ presented by $P$, and cups and caps $2$-cells are units and counits for these adjunctions. Relations implying dots also ensure that the dot $2$-cell is a cyclic $2$-morphism in the sense of \cite{CockettKoslowskiSeely00} for the biadjunction $\curlyvee \vdash \curlywedge \vdash \curlyvee$, making $\cl{P}$ into a \emph{pivotal $2$-category}.

\subsubsection{Confluence modulo isotopy} 
We consider the $3$-polygraph $E$ defined by the following data
\begin{enumerate}[{\bf i)}]
\item $E_{\leq 1} :=P_{\leq 1}$,
\item it has the four generating $2$-cells given in \eqref{E:TwoCellsCupCap} and the two dot generating $2$-cells in \eqref{E:TwoCellsDotCrossings},
\item the isotopy $3$-cells (\ref{E:Isotopy1} -- \ref{E:Isotopy3}) of the $3$-polygraph of pearls.
\end{enumerate}
Let $R$ be a $3$-polygraph such that $R_{\leq 2} := P_{\leq 2}$, and whose $3$-cells are given by $(\alpha_{\pm}, \beta_{\pm} , \gamma)$ of (\ref{E:relationsPermutations} -- \ref{E:ThreeCellKLR}), and let us consider the $3$-polygraph modulo $\ER{}{}$. Following \eqref{SSS:RemarquesERconfluentModuloE}, the only critical branchings we have to consider are those of the form $(f,g)$ with $f$ in $\ER^{\ast (1)}$ and $g$ in $\Ro$. This set of critical branchings can be reduced to the branchings $(f,g)$ with $f,g$ in $\Ro$. There is no critical branching modulo between $\gamma$ and $\alpha_{\pm}$ or $\beta_{\pm}$, thus the only critical branchings we have to consider are those of the $3$-polygraph of permutations described in {\cite[5.4.4]{GuiraudMalbos09}}, with both upward and downward direction of strands.

\subsubsection{Decreasing order operator for $E$-normalization}
The $3$-polygraph $R' := (R_0,R_1,R_2,R_3 \backslash \{ \gamma \})$ is left-disjoint from $E$, since no caps and cups $2$-cells appear in the sources of the generating $3$-cells of $R'$. Moreover, starting from an $R$-irreducible $2$-cell of $R_2^\ast$, using the $3$-cell $\gamma$ one can only create isotopies that do not overlap with the source of $\gamma$, that one can remove in $\ER$-rewriting paths, so that one can still reach an $E$-normal form. Therefore, one can use \eqref{SSS:ProvingCoherenceDecreasingOrder} to prove that the polygraph modulo $(R,E,\ER)$ is $E$-normalizing using a decreasing order operator $\Phi$ for $E$ compatible with $R$.

\begin{lemma}
There exists a decreasing operator order $\Phi$ for $E$ compatible with $R$.
\end{lemma}
\begin{proof}
For all $1$-cells $p$ and $q$ in $R_1^\ast$, we set $m(p,q)=2$, and for every $2$-cell $u:p\dfl q$ of $R_2^\ast$, we set $\Phi_{p,q}(u) = (\text{ldot} (u), \text{I}(u))$, where:
\begin{enumerate}[{\bf i)}]
\item $\text{ldot}(u)$ counts the number of left-dotted caps and cups, adding for such cap and cup the number of dots on it. In particular, for every $n$ in $\N^\ast$, we have
\[ 
\text{ldot} \left( \: \begin{tikzpicture}[baseline = 0]
	\draw[-,thick,black] (0.4,-0.1) to[out=90, in=0] (0.1,0.4);
	\draw[-,thick,black] (0.1,0.4) to[out = 180, in = 90] (-0.2,-0.1);
      \node at (-0.14,0.25) {$\color{black}\bullet$};
      \node at (-0.35,0.25) {$n$};
\end{tikzpicture} \: \right) = \text{ldot} \left( \: \begin{tikzpicture}[baseline = 0]
	\node at (-0.16,0.15) {$\color{black}\bullet$};
	\node at (-0.35,0.15) {$n$};
	\draw[-,thick,black] (0.4,0.4) to[out=-90, in=0] (0.1,-0.1);
	\draw[-,thick,black] (0.1,-0.1) to[out = 180, in = -90] (-0.2,0.4);
\end{tikzpicture} \; \right) = n+1 \]
for both orientations of strands.

\item $\text{I}(u)$ counts the number of instances of one of the following $2$-cells of $R_2^\ast$ in $u$: 
\[ \mathord{
\begin{tikzpicture}[baseline = 0]
  \draw[->,thick,black] (0.3,0) to (0.3,.4);
	\draw[-,thick,black] (0.3,0) to[out=-90, in=0] (0.1,-0.4);
	\draw[-,thick,black] (0.1,-0.4) to[out = 180, in = -90] (-0.1,0);
	\draw[-,thick,black] (-0.1,0) to[out=90, in=0] (-0.3,0.4);
	\draw[-,thick,black] (-0.3,0.4) to[out = 180, in =90] (-0.5,0);
  \draw[-,thick,black] (-0.5,0) to (-0.5,-.4);
   \node at (-0.5,-.5) {};
\end{tikzpicture}
}
\quad 
\mathord{
\begin{tikzpicture}[baseline = 0]
  \draw[->,thick,black] (0.3,0) to (0.3,-.4);
	\draw[-,thick,black] (0.3,0) to[out=90, in=0] (0.1,0.4);
	\draw[-,thick,black] (0.1,0.4) to[out = 180, in = 90] (-0.1,0);
	\draw[-,thick,black] (-0.1,0) to[out=-90, in=0] (-0.3,-0.4);
	\draw[-,thick,black] (-0.3,-0.4) to[out = 180, in =-90] (-0.5,0);
  \draw[-,thick,black] (-0.5,0) to (-0.5,.4);
   \node at (-0.5,.5) {};
\end{tikzpicture}
}
\quad 
\mathord{
\begin{tikzpicture}[baseline = 0]
  \draw[-,thick,black] (0.3,0) to (0.3,-.4);
	\draw[-,thick,black] (0.3,0) to[out=90, in=0] (0.1,0.4);
	\draw[-,thick,black] (0.1,0.4) to[out = 180, in = 90] (-0.1,0);
	\draw[-,thick,black] (-0.1,0) to[out=-90, in=0] (-0.3,-0.4);
	\draw[-,thick,black] (-0.3,-0.4) to[out = 180, in =-90] (-0.5,0);
  \draw[->,thick,black] (-0.5,0) to (-0.5,.4);
   \node at (0.3,-.5) {};
\end{tikzpicture}
}
\quad
\mathord{
\begin{tikzpicture}[baseline = 0]
  \draw[-,thick,black] (0.3,0) to (0.3,.4);
	\draw[-,thick,black] (0.3,0) to[out=-90, in=0] (0.1,-0.4);
	\draw[-,thick,black] (0.1,-0.4) to[out = 180, in = -90] (-0.1,0);
	\draw[-,thick,black] (-0.1,0) to[out=90, in=0] (-0.3,0.4);
	\draw[-,thick,black] (-0.3,0.4) to[out = 180, in =90] (-0.5,0);
  \draw[->,thick,black] (-0.5,0) to (-0.5,-.4);
   \node at (0.3,.5) {};
\end{tikzpicture}
} \]
\end{enumerate} 
For every $3$-cell $u \tfl v$ in $E$, we have $\Phi(u) > \Phi(v)$ and that $\Phi(u,u) = (0,0)$ when $u$ in $\irr (E)$. Moreover, $\Phi$ is compatible with $R$ because $R$-rewriting steps do not make the dot $2$-cell move around a cup or a cap, or create sources of isotopies.
\end{proof}

We deduce from Theorem~\ref{T:CoherentAcyclicity} a coherent extension of the polygraph modulo $(R,E,\ER)$. 
This square extension is made of the ten elements given by the diagrams of the homotopy basis for the $3$-polygraph of permutations from \cite[5.4.4]{GuiraudMalbos09} for both upward and downward orientations of strands and the $16$ elements given by the diagrams of the homotopy basis or the $3$-polygraph of pearls in \cite[Section 5.5.3]{GuiraudMalbos09} for both orientations of strands.

\begin{remark}
If we consider the linear~$(3,2)$-polygraph $P'$ whose $i$-cells are those of $P$ for $0 \leq i \leq 3$, but the $3$-cell $\gamma$ is replaced by the following $3$-cell:
\begin{eqn}{equation}
\label{E:ThreeCellKLR}
\mathord{
		\begin{tikzpicture}[baseline = 0,scale=0.7]
		\draw[<-,thick,black] (0.3,.5) to (-0.3,-.5);
		\draw[-,thick,black] (-0.2,.2) to (0.2,-.3);
		\draw[-,thick,black] (0.2,-.3) to[out=130,in=180] (0.5,-.4);
		\draw[-,thick,black] (0.5,-.4) to[out=0,in=270] (0.8,.5);
		\draw[-,thick,black] (-0.2,.2) to[out=130,in=0] (-0.5,.5);
		\draw[->,thick,black] (-0.5,.5) to[out=180,in=-270] (-0.8,-.5);
		
		\draw[<-,thick,black] (-0.3,-.5) to (0.3,-1.5);
		\draw[-,thick,black] (-0.5,-1.5) to[out=180,in=-90] (-0.8,-.5);
		\draw[-,thick,black] (-0.2,-1.2) to[out=230,in=0] (-0.5,-1.5);
		\draw[-,thick,black] (-0.2,-1.2) to (0.2,-0.7); 
		\draw[-,thick,black] (0.2,-0.7) to[out=50,in=180] (0.5,-0.6);
		\draw[->,thick,black] (0.5,-0.6) to[out=0,in=90] (0.7,-1.5);
		\end{tikzpicture}
} \; \raisebox{-3mm}{$\overset{\gamma'}{\Rrightarrow}$}  \quad \mathord{
\begin{tikzpicture}[baseline = 0,scale=2]
	\draw[<-,thick,black] (0.08,-.5) to (0.08,.2);
	\draw[->,thick,black] (-0.18,-.5) to (-0.18,.2);
\end{tikzpicture}
}  
\end{eqn}
which is relation arising in many presentations of monoidal categories appearing in representation category, see for instance Khovanov-Lauda's $2$-category introduced in~\cite{KhovanovLauda08} or in the Heisenberg categories defined by Khovanov in~\cite{Khovanov2010}, and extended by Brundan in~\cite{Brundan2017}. 
Note that with this new relation creating branchings with the isotopy relations, the $3$-polygraph $P'$ is not confluent. Indeed, the branching 
\begin{eqn}{equation}
\label{branch}
\raisebox{1.7cm}{
\xymatrix @C=5em @R=0.05em{ 
& \;\; \mathord{
\begin{tikzpicture}[baseline = 0,scale=0.611]
	\draw[<-,thick,black] (0.3,.5) to (-0.3,-.5);
	\draw[-,thick,black] (-0.2,.2) to (0.2,-.3);
        \draw[-,thick,black] (0.2,-.3) to[out=130,in=180] (0.5,-.4);
        \draw[-,thick,black] (-0.2,.2) to[out=130,in=0] (-0.5,.5);
        \draw[->,thick,black] (-0.5,.5) to[out=180,in=-270] (-0.8,-.5);        
      \draw[<-,thick,black] (-0.3,-.5) to (0.3,-1.5);
         \draw[-,thick,black] (-0.5,-1.5) to[out=180,in=-90] (-0.8,-.5);
         \draw[-,thick,black] (-0.2,-1.2) to[out=230,in=0] (-0.5,-1.5);
         \draw[-,thick,black] (-0.2,-1.2) to (0.2,-0.7); 
         \draw[-,thick,black] (0.2,-0.7) to[out=50,in=180] (0.5,-0.6);
          \draw[->,thick,black] (0.5,-0.6) to[out=0,in=90] (0.7,-1.5);
\end{tikzpicture}
} \\
\mathord{
\begin{tikzpicture}[baseline = 0,scale=0.611]
	\draw[<-,thick,black] (0.3,.5) to (-0.3,-.5);
	\draw[-,thick,black] (-0.2,.2) to (0.2,-.3);
        \draw[-,thick,black] (0.2,-.3) to[out=130,in=180] (0.5,-.4);
        \draw[-,thick,black] (0.5,-.4) to[out=0,in=270] (0.8,.5);
        \draw[-,thick,black] (-0.2,.2) to[out=130,in=0] (-0.5,.5);
        \draw[->,thick,black] (-0.5,.5) to[out=180,in=-270] (-0.8,-.5);        
      \draw[<-,thick,black] (-0.3,-.5) to (0.3,-1.5);
         \draw[-,thick,black] (-0.5,-1.5) to[out=180,in=-90] (-0.8,-.5);
         \draw[-,thick,black] (-0.2,-1.2) to[out=230,in=0] (-0.5,-1.5);
         \draw[-,thick,black] (-0.2,-1.2) to (0.2,-0.7); 
         \draw[-,thick,black] (0.2,-0.7) to[out=50,in=180] (0.5,-0.6);
          \draw[->,thick,black] (0.5,-0.6) to[out=0,in=90] (0.7,-1.5);
          	\draw[-,thick,black] (1.4,0.5) to[out=90, in=0] (1.1,0.9);
	\draw[->,thick,black] (1.1,0.9) to[out = 180, in = 90] (0.8,0.5);
	\draw[-,thick,black] (1.4,0.5) to (1.4,-1.2);
\end{tikzpicture}
}
\ar@3 @<-12pt>@/^2ex/  [ur] ^{} 
\ar@3@/_2ex/  [dr] _{} & \\
& \;\; \mathord{
\begin{tikzpicture}[baseline = 0,scale=0.911]
    \draw[->,thick,black] (0,0) to (0,0.4);
	\draw[-,thick,black] (0.8,0) to[out=90, in=0] (0.5,0.4);
	\draw[->,thick,black] (0.5,0.4) to[out = 180, in = 90] (0.2,0);
\end{tikzpicture}
}}
}
\end{eqn}
is not confluent. Moreover, solving this obstruction to confluence using Knuth-Bendix completion may lead to adding a great number of new relations, making analysis of confluence from critical branchings inefficient. To tackle this issue, this is convenient to rewrite modulo the isotopy relations.
In that case, there are critical branchings modulo isotopy of the form $(\Ro, \ER^{\ast (1)})$ between $\gamma'$ and $\alpha_+$ (resp. $\beta_+$) with respective source
\begin{eqn}{equation}
\label{E:BranchingsModuloIsotopy}
\mathord{
		\begin{tikzpicture}[baseline = 0,scale=0.7]
		\draw[<-,thick,black] (0.3,.5) to (-0.3,-.5);
		\draw[-,thick,black] (-0.2,.2) to (0.2,-.3);
		\draw[-,thick,black] (-0.2,.2) to[out=130,in=0] (-0.5,.5);
		\draw[->,thick,black] (-0.5,.5) to[out=180,in=-270] (-0.8,-.5);     
		\draw[<-,thick,black] (-0.3,-.5) to (0.3,-1.5);
		\draw[-,thick,black] (-0.5,-1.5) to[out=180,in=-90] (-0.8,-.5);
		\draw[-,thick,black] (-0.2,-1.2) to[out=230,in=0] (-0.5,-1.5);
		\draw[-,thick,black] (-0.2,-1.2) to (0.2,-0.7); 
		\draw[-,thick,black] (0.2,-0.7) to (0.19,-0.26);
		\end{tikzpicture}
	}\: \raisebox{-5mm}{$\sim$} \: \mathord{
		\begin{tikzpicture}[baseline = 0,scale=0.7]
		\draw[<-,thick,black] (0.3,.5) to (-0.3,-.5);
		\draw[-,thick,black] (-0.2,.2) to (0.2,-.3);
		\draw[-,thick,black] (-0.2,.2) to[out=130,in=0] (-0.5,.5);
		\draw[->,thick,black] (-0.5,.5) to[out=180,in=-270] (-0.8,-.5);     
		\draw[<-,thick,black] (-0.3,-.5) to (0.3,-1.5);
		\draw[-,thick,black] (-0.5,-1.5) to[out=180,in=-90] (-0.8,-.5);
		\draw[-,thick,black] (-0.2,-1.2) to[out=230,in=0] (-0.5,-1.5);
		\draw[-,thick,black] (-0.2,-1.2) to (0.2,-0.7); 
		\draw[-,thick,black] (0.2,-.3) to[out=130,in=180] (0.5,-.4);
		\draw[-,thick,black] (0.2,-0.7) to[out=50,in=180] (0.5,-0.6);
		\draw[-,thick,black] (0.5,-.4) to[out=0,in=270] (0.8,.3); 
		\draw[-,thick,black] (1.4,0.3) to[out=90, in=0] (1.1,0.7);
		\draw[-,thick,black] (1.1,0.7) to[out = 180, in = 90] (0.8,0.3);
		\draw[-,thick,black] (0.5,-0.6) to[out=0,in=90] (0.7,-1.2);
		\draw[-,thick,black] (1.3,-1.2) to[out=-90, in=0] (1,-1.6); %
		\draw[-,thick,black] (1,-1.6) to[out = 180, in = -90] (0.7,-1.2);
		\draw[-,thick,black] (1.3,-1.2) -- (1.4, 0.3);
		\end{tikzpicture}
	}  \: \raisebox{-5mm}{$,$} \: \qquad\raisebox{-1mm}{$\mathord{
		\begin{tikzpicture}[baseline = 0,scale=0.7]
		\draw[<-,thick,black] (0.3,.5) to (-0.3,-.5);
		\draw[-,thick,black] (-0.2,.2) to (0.2,-.3);
		\draw[-,thick,black] (-0.2,.2) to[out=130,in=0] (-0.5,.5);
		\draw[->,thick,black] (-0.5,.5) to[out=180,in=-270] (-0.8,-.5);     
		\draw[<-,thick,black] (-0.3,-.5) to (0.3,-1.5);
		\draw[-,thick,black] (-0.5,-1.5) to[out=180,in=-90] (-0.8,-.5);
		\draw[-,thick,black] (-0.2,-1.2) to[out=230,in=0] (-0.5,-1.5);
		\draw[-,thick,black] (-0.2,-1.2) to (0.2,-0.7); 
		\draw[-,thick,black] (0.19,-0.26) to (0.59, -0.76);
		\draw[-,thick,black] (0.2,-0.7) to (0.6, -0.2);
		\draw[->,thick,black] (0.6,-0.2) to (0.6, 0.5);
		\draw[-,thick,black] (0.59,-0.76) to (0.6, -1.5);
		\end{tikzpicture}
	}$} \: \raisebox{-5mm}{$\sim$} \: \raisebox{-2mm}{$\mathord{
			\begin{tikzpicture}[baseline = 0,scale=0.7]
			\draw[<-,thick,black] (0.3,.5) to (-0.3,-.5);
			\draw[-,thick,black] (-0.2,.2) to (0.2,-.3);
			\draw[-,thick,black] (-0.2,.2) to[out=130,in=0] (-0.5,.5);
			\draw[->,thick,black] (-0.5,.5) to[out=180,in=-270] (-0.8,-.5);     
			\draw[<-,thick,black] (-0.3,-.5) to (0.3,-1.5);
			\draw[-,thick,black] (-0.5,-1.5) to[out=180,in=-90] (-0.8,-.5);
			\draw[-,thick,black] (-0.2,-1.2) to[out=230,in=0] (-0.5,-1.5);
			\draw[-,thick,black] (-0.2,-1.2) to (0.2,-0.7); 
			\draw[-,thick,black] (0.2,-.3) to[out=130,in=180] (0.5,-.4);
			\draw[-,thick,black] (0.2,-0.7) to[out=50,in=180] (0.5,-0.6);
			\draw[-,thick,black] (0.5,-.4) to[out=0,in=270] (0.8,.3); 
			\draw[-,thick,black] (1.4,0.3) to[out=90, in=0] (1.1,0.7);
			\draw[-,thick,black] (1.1,0.7) to[out = 180, in = 90] (0.8,0.3);
			\draw[-,thick,black] (0.5,-0.6) to[out=0,in=90] (0.7,-1.2);
			\draw[-,thick,black] (1.3,-1.2) to[out=-90, in=0] (1,-1.6); %
			\draw[-,thick,black] (1,-1.6) to[out = 180, in = -90] (0.7,-1.2);
			\draw[-,thick,black]  (1.3,-1.2) to (1.3,-0.7);
			\draw[-,thick,black] (1.4,0.3) -- (1.4,-0.26);
			\draw[-,thick,black]  (1.4,-0.26) to (1.8,-0.76);
			\draw[-,thick,black]  (1.3,-0.7) to (1.75, -0.2);
			\draw[-,thick,black]  (1.75,-0.2) to (1.75,0.5);
			\draw[-,thick,black]  (1.8,-0.76) to (1.8,-1.5);  	
			\end{tikzpicture}
		}$} \: \raisebox{-5mm}{$,$} 
\end{eqn}
and to get confluence of these branchings, we have to add a bubble slide relation in $R$ of the form:
\[ \raisebox{-1mm}{$\begin{tikzpicture}[baseline = 0, scale = 1.2]
  \draw[-,thick,black] (0,0.4) to[out=180,in=90] (-.2,0.2);
  \draw[->,thick,black] (0.2,0.2) to[out=90,in=0] (0,.4);
 \draw[-,thick,black] (-.2,0.2) to[out=-90,in=180] (0,0);
  \draw[-,thick,black] (0,0) to[out=0,in=-90] (0.2,0.2);
 \end{tikzpicture}$} \: \: \mathord{
\begin{tikzpicture}[baseline = 0,scale=0.8]
  \draw[->,thick,black] (0,-0.4) to (0,0.6);
\end{tikzpicture}} \overset{}{\Rrightarrow} \mathord{
\begin{tikzpicture}[baseline = 0,scale=0.8]
  \draw[->,thick,black] (0,-0.4) to (0,0.6);
\end{tikzpicture}} \: \: \raisebox{-2mm}{$\begin{tikzpicture}[baseline = 0, scale=1.2]
  \draw[->,thick,black] (0,0.4) to[out=180,in=90] (-.2,0.2);
  \draw[-,thick,black] (0.2,0.2) to[out=90,in=0] (0,.4);
 \draw[-,thick,black] (-.2,0.2) to[out=-90,in=180] (0,0);
  \draw[-,thick,black] (0,0) to[out=0,in=-90] (0.2,0.2);
 \end{tikzpicture}$} \]
 As a consequence, following \cite[Ex. 4.3.2]{Alleaume16}, $\ER$ cannot be terminating. 
 The study of this type of examples requires to extend our results to the cases of \emph{quasi-terminating} polygraphs modulo, that is terminating up to rewriting loops.
 \end{remark}
 
\subsection{Groups}
We apply the coherent completion procedure to the case of group rewriting systems, by considering the notion of positive rewriting step introduced in \cite{ChenavierDupontMalbos20}. We expect that this would allow to compute generating syzygies for group presentations, but this would require to extend the coherence results of this article to positive coherence, that is coherence from squares that are obtained from positive rewriting paths. This procedure is a first approach towards coherence in groups by rewriting, and should be improved to deal with non-necessary syzygies, and branchings that need not to be considered.

\subsubsection{Positive group rewriting} 
In \cite{ChenavierDupontMalbos20}, a notion of group rewriting system was introduced, and based on rewriting modulo the inverse axioms. In order to avoid termination obstructions, rewriting are defined with respect to \emph{positive} rewriting steps as follows. 
Let $(X,R)$ be a presentation of a group $G$, and $>$ be a total order on $X$. 
We define the order $\succ$ on the free group $F(X)$ by $u \succ v$ if:
\begin{enumerate}[{\bf i)}]
\item $\ell(u) > \ell(v)$, where $\ell(u)$ corresponds to the minimal number of elements of $X$ needed to write $u$, or
\item $\ell(u) = \ell(v)$ and $\hat{u} >_{\text{lex}} \hat{v}$, where $\hat{u}$ (resp. $\hat{v}$) is the normal form of $u$ (resp. $v$) with respect to group relations $xx^- \dfl 1$ and $x^-x \dfl 1$ for every $x$ in $X$, and $>_{\text{lex}}$ is the lexicographic order on $>$.
\end{enumerate}   
Positive rewriting steps are then reductions of the form
\[ u r_1 v \dfl u r_2^{-} v \]
where $u,v$ are elements of the free group $F(X)$ generated by $X$, $r = r_1 r_2$ is an element of $R \cup R^-$, and such that $u r_1 v \succ u r_2^- v$.

Let us consider a $(2,1)$-dipolygraph $((P_0,P_1,P_2),(Q_1,Q_2))$ presenting $G$ as in \eqref{SSS:CoherentPresentationGroups}.
We will consider the $2$-polygraph modulo $(R,E,\ER)$, where $E$ is the $2$-polygraph $(P_0,P_1,P_2)$, and $R$ is the $2$-polygraph $(P_0,P_1,P'_2)$, where $P'_2$ is the cellular extension of $P_1^\ast$ made of elements of the form $r \dfl 1$ and $r^- \dfl 1$, for every $r \in R$. The $2$-polygraph $E$ is convergent, indeed it is terminating since its rules strictly decrease the length of words, and its confluence is ensured by the confluence of its critical branchings as follows:
\[
\xymatrix{
xx^- x \ar@2@/^3ex/ [rr]
\ar@2@/_3ex/ [rr] & & x 
}.
\]

\medskip
Note that we say that an $\ER$-rewriting step $(e,f)$ for $e$ in $\tck{E}$ and $f \in \Ro$ is \emph{positive} if $f$ is a \emph{positive} $R$-rewriting step as defined above. Following \cite{ChenavierDupontMalbos20}, we will study confluence properties for the $2$-polygraph modulo $(R,E,\ER)$ with respect to positive rewriting paths.

\subsubsection{Example: the braid group}
Let us illustrate the procedure \eqref{SSS:KnuthBendixSquierModulo} for the braid groups on three strands, presented with three generators $s,t,a$ ordered by $s > t > a$, and the rules $f:sta^- \dfl 1$, $g:tas^-a^- \dfl 1$.

The rules induce $\ER$-rewriting steps of the form $st \equiv sta^- a \dfl a$ or $tas^- \equiv tas^- a^- a \dfl a$, $ta \equiv tas^- a^- as \dfl as$. Note that these $\ER$-rewriting steps or positive since  $st \succ a$, $tas^- \succ a$ and $ta \succ as$. However, the $\ER$-rewriting step $s \equiv sta^- a t^- \dfl a t^-$ is not positive. These induced reductions yield critical branchings of the form 
\[ 
\xymatrix{
sta^-a \ar@2[d] _-{e} \ar@2[r]^-{fa} & a \ar@2[d] ^{\rotatebox{90}{=}} \\
st \ar@2[r] _-{e^- \cdot fa} & a 
} \]
However, these critical branchings do not need to be considered in the computation of a coherent extension as explained in \eqref{SSS:KnuthBendixSquierModulo}, since they are branchings of the form  \eqref{E:InducedBranchings}. Therefore, we only have to consider critical branchings of the form $(f,g)$, where $f$ is an $\ER$-rewriting step and $g$ is an $R$-rewriting step. There is such a critical branching given by
\[ 
\xymatrix{
stas^- a^- \ar@2[r] ^-{sg} \ar@2[d] _-{\rotatebox{90}{$\sim$}} & s \\
sta^- a a s^- a^- \ar@2[r] _-{faas^-a^-} & aas^- a^- 
}
\]
which is not confluent, so that we have to add the $h:aas^- a^- s^- \dfl 1$ and its inverse $h^-:sasa^- a^- \dfl 1$ in $R$.
This critical branching could be written differently, for instance 
\[ 
\xymatrix@C=5em{
sta^- \ar@2[r] ^-{f} \ar@2 [d] _-{\rotatebox{90}{$\sim$}} & 1 \\
stas^-a^-asa^- a^-  \ar@2[r] _-{sgasa^-a^-} & sasa^- a^-
}
\]
but these branchings are equivalent for the bi-action of $\tck{E}$ defined in \eqref{SSS:ActionOnSquare}, so that we only need to consider one of them.
There is an $(\ER,R)$-critical branching involving $f$ and $h$ as follows:
\[ 
\xymatrix@C=4em{
 sasta^- \ar@2[r] ^-{saf} \ar@2 [d] _-{\rotatebox{90}{$\sim$}} & sa \\
sasa^-a^-aata^- \ar@2[r] _-{haata'} & aata^-
}
\]
that is not confluent, so we have to add a new rule $k:aatap -a^-s^- \dfl 1$ and its inverse in $R$. One then checks that with the $2$-polygraph $\check{R}$ defined by $\check{R}_{\leq 1} = R_{\leq 1}$ and $\check{R}_2$ contains as generating $2$-cells $f,g,h,k$ and their inverses satisfies that the $2$-polygraph $(\check{R},E,{}_E \check{R})$ is confluent. One also checks that the following square cells form a family of generating confluence for critical $\ER$-branchings of the form $(f,g)$, with $f$ in $\Ro$ and $g$ in $\ER^{\ast (1)}$:
\[ 
\xymatrix@C=5em{
sta^- \ar@2[rr] ^-{f} ^-{}="src" \ar@2 [d] _-{\rotatebox{90}{$\sim$}} & &  1  \ar@2 [d] ^-{\rotatebox{90}{=}}\\
stas^-a^-asa^- a^-  \ar@2[r] _-{sgasa^-a^-} & sasa^- a^- \ar@2[r] _-{h^-} & 1
\ar@3 "src"!<8pt,-10pt>;"2,2"!<-7pt,-10pt> ^-{A}
} \quad 
\xymatrix@C=4em{
 sasta^- \ar@2[rr] ^-{}="src" ^-{saf} \ar@2 [d] _-{\rotatebox{90}{$\sim$}} & & sa \ar@2 [d] ^-{\rotatebox{90}{=}}\\
sasa^-a^-aata^- \ar@2[r] _-{haata'} & aata^- \ar@2 [r] _-{ksa} & sa
\ar@3 "src"!<2pt,-10pt>;"2,2"!<-7pt,-10pt> ^-{B}
}
\]
\[ 
\xymatrix@C=4em{
aas^-a^-s^- \ar@2[rr] ^-{h} ^-{}="src" \ar@2 [d] _-{\rotatebox{90}{$\sim$}} & & 1 \ar@2 [d] ^-{\rotatebox{90}{=}}\\
at^-tas^-a^-s^- \ar@2[r] _-{at^-gs^-} & at^-s^- \ar@2 [r] _-{f^-} & 1 
\ar@3 "src"!<2pt,-10pt>;"2,2"!<-2pt,-10pt> ^-{C}
} 
\quad 
\xymatrix@C=4em{
 aata^-a^-s^- \ar@2[rr] ^-{k} ^-{}="src" \ar@2 [d] _-{\rotatebox{90}{$\sim$}} & & 1 \ar@2 [d] ^-{\rotatebox{90}{=}}\\
aas^-sta^-a^-s^- \ar@2[r] _-{aas^-fa^-s^-} & aas^-a^-s^- \ar@2 [r] _-{h} & 1
\ar@3 "src"!<2pt,-10pt>;"2,2"!<-2pt,-10pt> ^-{D}
}
\]

\[ 
\xymatrix@C=4em{
sasasa^-a^-aa \ar@2[r] ^-{sah^-aa} \ar@2 [d] _-{\rotatebox{90}{$\sim$}} & saaa \ar@2 [r]^-{k^- aata} ^-{}="src" & aata \ar@2 [r] ^-{aagas} & aaas \ar@2 [d] ^-{\rotatebox{90}{=}}\\
sasa^-a^-aaas \ar@2[rrr] _-{h^-aaas} ^-{}="tgt" & & & aaas 
\ar@3 "src"!<2pt,-10pt>;"tgt"!<2pt,10pt> ^-{E} 
} 
\]

\[ 
\xymatrix@C=4em{
sasaat^-a^-a^-aat \ar@2[r] ^-{sak^-aat} \ar@2 [d] _-{\rotatebox{90}{$\sim$}} & saaat \ar@2 [r]^-{k^- aatat} & aatat \ar@2 [r] ^-{aagast} & aaast \ar@2 [r] ^-{aaafa} & aaaa \ar@2 [d] ^-{\rotatebox{90}{=}}\\
sasa^-a^-aaaa \ar@2[rrrr] _-{h^-aaaa} ^-{}="tgt" & & & & aaaa
\ar@3 "1,3"!<0pt,-10pt>;"tgt"!<3pt,10pt> ^-{F}  
} 
\]
together with their six inverses, given by the critical $\ER$-branchings above with all the $\ER$-rewriting steps replaced by their inverse. Note that the square cells $A$ and $C^-$ (resp. $B$ and $Da^-s^-$) give the same relation among relation in the quotient by the axioms of group, so that one can remove $C$ and $D$. Furthermore, applying an homotopical reduction procedure as in \cite{GaussentGuiraudMalbos15} shows that all the square cells $A$, $B$, $C$ and $D$ could be removed, to obtain an empty coherent extension of the presented group.

\begin{remark}
Note that there are other critical $\ER$-branchings that we do not consider here, since they do not imply positive $\ER$-rewriting steps. For instance, we have an $\ER$-branching of the form $(h,aas^-t^-s^-fs^-)$, and the rule $aas^-t^-s^-fs^-$ is not positive since its reduced source has length equals to $5$, while its reduced target has length $6$. This is due to the fact that this overlapping between $h$ and $f$ is on a subword  whose length is smaller than $\frac{1}{2} \text{min}(\ell(sta^-), \ell(aas^-a^-s^-))$.
\end{remark}

\begin{small}
\renewcommand{\refname}{\Large\textsc{References}}
\bibliographystyle{plain}
\bibliography{biblioCURRENT}
\end{small}

\clearpage

\quad

\vfill

\begin{footnotesize}
\bigskip
\auteur{Benjamin Dupont}{bdupont@math.univ-lyon1.fr}
{Univ Lyon, Universit\'e Claude Bernard Lyon 1\\
CNRS UMR 5208, Institut Camille Jordan\\
43 blvd. du 11 novembre 1918\\
F-69622 Villeurbanne cedex, France}

\bigskip
\auteur{Philippe Malbos}{malbos@math.univ-lyon1.fr}
{Univ Lyon, Universit\'e Claude Bernard Lyon 1\\
CNRS UMR 5208, Institut Camille Jordan\\
43 blvd. du 11 novembre 1918\\
F-69622 Villeurbanne cedex, France}
\end{footnotesize}

\vspace{1.5cm}

\begin{small}---\;\;\today\;\;-\;\;\hhmm\;\;---\end{small}
\end{document}